\theoremstyle{break}
\newtheorem{lemma}{Lemma}[section]
\newtheorem{proposition}[lemma]{Proposition}
\newtheorem{theorem}[lemma]{Theorem}
\newtheorem{corollary}[lemma]{Corollary}
\newtheorem{conjecture}[lemma]{Conjecture}
\newtheorem{remark}[lemma]{Remark}
\newtheorem{definition}[lemma]{Definition}
\newtheorem{example}[lemma]{Example}
\newcommand \QQ {{\mathbb Q}}
\newcommand \FF {{\mathbb F}}
\newcommand \AAA {{\mathbb A}}
\newcommand \ZZ {{\mathbb Z}}
\newcommand \LL {{\mathbb L}}
\newcommand \cI {{\mathcal I}}
\newcommand \cV {{\mathcal V}}
\newcommand{\q}{/\!\!/}
\newcommand{\rank}{\mathrm{rank\,}}
\newcommand{\girth}{\mathrm{girth}\,}
\newcommand{\Mat}{\mathrm{Mat}}
\newcommand{\adj}{\mathrm{adj}\,}
\newcommand\ph{\scalebox{1.3}{$\varphi$}}
\newcommand{\1}{{\rm 1\hspace*{-0.4ex}%
\rule{0.1ex}{1.52ex}\hspace*{0.2ex}}}
\begin{document}
\begin{center}
\LARGE\textsf{Dual Graph Polynomials and a 4-face Formula}\\
\bigskip
\normalsize \textsc{ Dmitry Doryn}\\
doryn@mpim-bonn.mpg.de\\
\end{center}

\begin{abstract}
We study the dual graph polynomials $\ph_G$ and the case when a Feynman graph has no triangles but has a 4-face. This leads to the proof of the duality-admissibility of all graphs up to 18 loops. As a consequence, the $c_2$ invariant is the same for all 4 Feynman period representations (position, momentum, parametric and dual parametric) for any physically relevant graph.
\end{abstract}
\section{Introduction}
The analysis of amplitudes and periods in renormalization group functions by
means of arithmetic and algebraic geometry has become a common quest in recent
years. Since the work of Broadhurst and Kreimer, \cite{BrKr}, it is well-known that the single-scale massless Feynman integral in perturbative quantum field theory usually give rise to interesting patterns involving multiple zeta values (MZV). In particular, the Feynman periods for primitive graphs in $\phi^4$ are evaluated to elements in $\QQ$-algebra of MZV for almost all known cases, see \cite{Sch}. The first (and, so far, unique) example, when a Feynman period gives something worse, was computed by Panzer in \cite{Pa}, the value is expressible in terms of multiple polylogarithms evaluated at primitive sixth roots of unity. Unfortunately, these values are obtained by the intensive numerical analysis and there is no good way to predict the periods of Feynman graphs in general.

The first step to the understanding of the Feynman period from the algebro-geometrical perspective was done by Bloch, Esnault and Kreimer in \cite{BEK}, where the "Feynman motive" was defined. Further results in the cohomological direction can be found in \cite{D}, \cite{BrD}. More can be done on the arithmetical side, see \cite{St}, \cite{D2}, \cite{Sch2}, \cite{BrSch}. Out of the number of rational points on the poles of the Feynman differential form, one can define the $c_2$ invariant. The miracle is that it respects all the relations between known periods, so it seems to be the discrete analogue of the Feynman period. In this article we continue to study the properties of the $c_2$ invariant.   \medskip

For a graph $G$, define the graph polynomial and the dual graph polynomial
\begin{equation}\label{d1}
\Psi_G:=\sum_{sp.tr. T}\prod_{e\not\in T} \alpha_e, \quad \ph_G:=\sum_{sp.tr. T}\prod_{e\in T} \alpha_e\in \ZZ[\alpha_1,\ldots,\alpha_{N_G}]
\end{equation}
with the sums going over all spanning trees. The variety $X_G:=\cV(\Psi_G)\subset \AAA^{N_G}$ describes the poles (of order 2) of the Feynman differential form. For being able to speak on the Feynman period one needs to restrict to log-divergent graphs: the graphs $G$ with the number of edges equal to twice the loop number, $N_G=2h_G$.

Counting the $\FF_q$-rational points of the graph hypersurface $X_G$, one observes that the important piece of this value is the coefficient of $q^2$ in the $q$-expansion:
\begin{equation}\label{b2}
c_2(G)_q:= \#X_G(\FF_q)/q^2 \mod q^3.
\end{equation}
It is called the $c_2$ invariant. On one side, we are able to compute this coefficient analytically (or partially on a computer) for many small (physically relevant) graphs. On the other side, it turns out the this coefficient contains some information about the period.

There are 4 different representations of the Feynman period: in position and momentum spaces, parametric and dual parametric representations. The 4 resulting values do coincide. One can also try to get a discrete analogue of this result. In \cite{BSY}, the authors have constructed the $c_2$ invariant $c_2(G)^{mom}_q$ out of the geometry of the poles of the Feynman period in momentum space and have proved that $c_2(G)^{mom}_q=c_2(G)_q$ for log-divergent graphs. In \cite{D4}, the $c_2$ invariants $c_2(G)^{pos}_q$ and $c_2(G)^{dual}_q$ were defined in position space and in dual parametric space out of the related geometry, and the coincidence of all four $c_2$ invariants was proved for graphs with minor conditions plus the important restriction to the graphs called duality admissible:
\begin{theorem}\label{thm11}
Let $G$ be a log-divergent graph that is duality admissible with $h_G\geq 3$. Then 
\begin{equation}\label{b3}
c_2(G)^{mom}_q=c_2(G)_q=c_2(G)^{dual}_q=c_2(G)^{pos}_q.
\end{equation} 
\end{theorem}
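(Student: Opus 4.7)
The plan is to establish the four-term equality as a chain, pairing adjacent invariants and reducing the theorem to separate comparisons. The leftmost equality $c_2(G)^{mom}_q = c_2(G)_q$ is already available from \cite{BSY} for log-divergent graphs, so I would cite it directly. What remains are the two substantive identifications $c_2(G)_q = c_2(G)^{dual}_q$ and $c_2(G)^{dual}_q = c_2(G)^{pos}_q$, each an instance of a geometric duality whose application is exactly what the hypothesis of duality admissibility is designed to permit.

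For $c_2(G)_q = c_2(G)^{dual}_q$ I would exploit the Cremona relation between the two polynomials in (\ref{d1}), namely
\[
\ph_G(\alpha_1,\ldots,\alpha_{N_G}) = \Bigl(\prod_{e} \alpha_e\Bigr)\, \Psi_G(\alpha_1^{-1},\ldots,\alpha_{N_G}^{-1}),
\]
valid because the spanning trees $T$ and their complements $E(G)\setminus T$ are interchanged by complementation. On the torus $(\FF_q^{\times})^{N_G}$ the loci $X_G = \cV(\Psi_G)$ and $\cV(\ph_G)$ correspond under the standard Cremona involution $\alpha_e \mapsto \alpha_e^{-1}$, so their torus point counts coincide. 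Expressing $\#X_G(\FF_q)$ and $\#\cV(\ph_G)(\FF_q)$ by inclusion-exclusion over the coordinate hyperplanes $\{\alpha_e = 0\}$ on which Cremona is undefined, the difference of the two counts is a signed sum of $\FF_q$-point counts of $X_G$ and $\cV(\ph_G)$ on coordinate subvarieties (equivalently, graph-theoretic deletion/contraction minors). Duality admissibility is precisely the hypothesis that forces each of these boundary strata to contribute a count divisible by $q^3$, so only the common torus term matters modulo $q^3$ and the two $c_2$ invariants agree.

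The equality $c_2(G)^{dual}_q = c_2(G)^{pos}_q$ should follow by a parallel argument, replacing the Cremona involution on parametric coordinates by the analogous duality between the position-space incidence matrix and the matrix whose determinant expansion produces $\ph_G$ via an adjugate identity. Again the comparison reduces to boundary contributions indexed by coordinate strata, and again duality admissibility is exactly the condition that kills each such term modulo $q^3$; the assumption $h_G \geq 3$ serves to eliminate low-loop degenerations where the adjugate loses the expected rank and the stratification collapses. I expect the hard step to be the middle one, $c_2(G)_q = c_2(G)^{dual}_q$: while the Cremona correspondence is algebraically transparent on the torus, translating the combinatorial definition of duality admissibility into the sharp divisibility by $q^3$ for every boundary minor is where the graph-theoretic content of the hypothesis is actually consumed, and organising the inclusion-exclusion so that the surviving terms genuinely cancel modulo $q^3$ is the main technical hurdle.
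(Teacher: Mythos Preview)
The paper does not prove this theorem at all: it is quoted in the introduction as the main result of \cite{D4}, and is used as a black box (see the one-line justification of Theorem~\ref{thm5n4}). So there is no proof here to compare against, and your proposal should really be read as a sketch of the argument in \cite{D4}.

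With that caveat, your outline of the step $c_2(G)_q=c_2(G)^{dual}_q$ is the right shape --- Cremona identifies the torus loci of $X_G$ and $Z_G$, and inclusion--exclusion over coordinate hyperplanes reduces the difference to boundary strata indexed by sub-quotient graphs. But you have blurred an important asymmetry. Setting $\alpha_e=0$ in $\Psi_G$ produces $\Psi$-polynomials of minors with $N_{G'}>2h_{G'}$, whereas on the $\ph$-side you get $\ph$-polynomials of minors with $N_{G'}>2n_{G'}$. Duality admissibility (Definition~\ref{def36}) is stated \emph{only} for the $\ph$-side; the $\Psi$-side boundary contributions are killed unconditionally, because any graph with $N>2h$ has a vertex of valency $\leq 3$ and the $3$-valent analogue of Proposition~\ref{lem19} applies (this is exactly the remark after Definition~\ref{def36}). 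Your sentence ``duality admissibility is precisely the hypothesis that forces each of these boundary strata to contribute a count divisible by $q^3$'' is therefore inaccurate: it handles only half of them, and the other half is automatic.

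Your sketch of $c_2(G)^{dual}_q=c_2(G)^{pos}_q$ is off target. This equality is not another Cremona-type comparison between two edge-variable hypersurfaces; rather, the position-space invariant lives in vertex variables, and the argument in \cite{D4} is the dual counterpart of the \cite{BSY} proof of $c_2^{mom}=c_2$ (which uses corolla/cycle identities like (\ref{c42}) and (\ref{c46}) and a resultant-based reduction, not an adjugate identity). So the chain is really $\text{mom}\leftrightarrow\text{parametric}$ via \cite{BSY}, $\text{pos}\leftrightarrow\text{dual parametric}$ via the dual of \cite{BSY}, and $\text{parametric}\leftrightarrow\text{dual parametric}$ via Cremona plus the two boundary vanishings; duality admissibility is consumed only in the last of these.
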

The condition of duality admissibility for $G$ means the vanishing of $c_2(\gamma)^{dual}_q$ for certain sub-quotient graphs $\gamma$ of $G$ (see Definition \ref{def36}). This condition is the property that is surprisingly hard to verify in general, but seems to be always satisfied.
\begin{conjecture}
Let $G$ be a log-divergent graph with $h_G\geq 3$. Then $G$ is duality admissible.
\end{conjecture}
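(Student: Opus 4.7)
The plan is to prove the conjecture by induction on the loop number $h_G$, reducing the duality admissibility of $G$ to that of sub-quotients with strictly fewer loops through face-based reduction formulas for $\ph_G$. Base cases at small $h_G$ can be checked directly by computer. The inductive step splits on the girth of $G$: if $G$ contains a triangle, one applies a triangle-reduction identity for $\ph_G$ to express the $\FF_q$-point count of the relevant sub-quotient in terms of smaller graphs, so that the inductive hypothesis controls the resulting $c_2(\gamma)^{dual}_q$ contributions; if $G$ is triangle-free but contains a 4-face, one invokes the 4-face formula developed in this paper. This combined argument is presumably what yields the 18-loop bound announced in the abstract, since in the physically relevant $\phi^4$ setting with $N_G=2h_G$ the degree and girth constraints force girth $\leq 4$ for all $G$ up to $18$ loops, so one of the two formulas always applies.

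The genuine obstacle is the case of girth $\geq 5$, which first arises near 19 loops. Here neither face formula suffices, and one needs an analogous $k$-face formula for $k \geq 5$. These become combinatorially much more intricate because the spanning-tree definition of $\ph_G$ interacts less cleanly with large faces: eliminating the $k$ edges of a face simultaneously introduces many auxiliary sub-quotients and awkward cross terms in the relation between $\ph_{G}$ and $\ph_{G\setminus e}$, $\ph_{G/e}$. The cleanest route would be to establish a single face-reduction formula valid for arbitrary $k$, for instance by applying deletion--contraction to all edges of a face at once, or to exploit matroid duality between $\Psi_G$ and $\ph_G$ in order to transfer known small-edge-cut reductions for $\Psi_G$ (which already underlie most known $c_2$ identities) to face reductions for $\ph_G$.

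A complementary route, in case the $k$-face program stalls, is to prove $c_2(\gamma)^{dual}_q=0$ directly for every sub-quotient $\gamma$ triggered by Definition \ref{def36}, by analyzing $\cV(\ph_\gamma)\subset\AAA^{N_\gamma}$ parametrically and extracting the coefficient of $q^2$ in $\#\cV(\ph_\gamma)(\FF_q)$ via Chevalley--Warning-type counting. The main difficulty here is that $\ph_\gamma$ lacks the monomial-scaling and linear-in-each-variable features of $\Psi_\gamma$: since each spanning tree contributes the product of \emph{its own} edges rather than those of its complement, the standard edge-by-edge reductions that drive $c_2$ computations for $\Psi$ do not apply directly to $\ph$. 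Overall, the 18-loop theorem reduces the conjecture to the high-girth regime, and the hard work that remains is extending the face-formula technology past girth $4$ or finding a structural substitute for it.
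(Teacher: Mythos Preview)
The statement is a conjecture, and the paper does not prove it; it proves only the partial result $h_G \leq 18$ (Theorem \ref{thm53}). Your proposal does not prove the conjecture either: you correctly identify girth $\geq 5$ as the obstruction, and your suggested routes (a general $k$-face formula, or a direct Chevalley--Warning analysis) remain speculative, as the paper itself acknowledges in its closing paragraph.

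Your account of how the 18-loop bound arises contains a genuine misconception. There is no induction on $h_G$, and the girth of $G$ itself is irrelevant. Duality admissibility (Definition \ref{def36}) asks that $[\ph_{G'}]_q \equiv 0 \pmod{q^3}$ for every sub-quotient $G' = G\backslash I \q J$ with $|J| > |I|$; such a $G'$ automatically satisfies $N_{G'} > 2n_{G'}$. The paper's Lemma \ref{lem62} (a computer search, not an inductive base case) shows that any graph with $N > 2n$ and $3 \leq n \leq 17$ has girth $< 5$. Hence for $h_G \leq 18$ every relevant sub-quotient carries a triangle or a 4-face, and Proposition \ref{lem19} or Theorem \ref{thm54} yields the required vanishing for that sub-quotient directly. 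Note that $G$ itself can already have girth $5$ at $h_G = 17$ (the decompletion of Robertson's graph, see Theorem \ref{thm_Rob}), so your claim that ``girth $\leq 4$ for all $G$ up to 18 loops'' is false; it is the strict inequality $N_{G'} > 2n_{G'}$ on the sub-quotients, not log-divergence of $G$, that forces small girth. Finally, your assertion that $\ph_\gamma$ ``lacks the linear-in-each-variable features of $\Psi_\gamma$'' is incorrect: $\ph_G$ is multilinear just as $\Psi_G$ is (every spanning-tree monomial is squarefree), and Section~2 of the paper develops exactly the same Dodgson and contraction--deletion calculus for $\ph$ that one has for $\Psi$.
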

In \cite{D4}, the conjecture was verified for graphs $G$ with $\girth(G)\leq 3$. Here $\girth(G)$ is the minimal $n$ such that each cycle of $G$ has length at least $n$. While checking the duality admissibility we should control a half of all sub-quotient graphs of $G$, so the case $\girth=3$ is not enough already for several graphs with 7 loops.

In this article we prove the conjecture for graphs with $\girth(G)=4$. More precisely (Theorem \ref{thm53}):
\begin{theorem}
Let $G$ be a log-divergent graph with $3\leq h_G\leq 18$ loops. Then $G$ is duality admissible. 
\end{theorem}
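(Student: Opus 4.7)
The plan is to combine the triangle-case from \cite{D4}, the new $4$-face formula of this paper, and a combinatorial bound that forces small girth in log-divergent graphs with few loops. Recall from \cite{D4} that duality admissibility of $G$ amounts to verifying the vanishing $c_2(\gamma)^{dual}_q = 0$ for each sub-quotient graph $\gamma$ in the family singled out by Definition \ref{def36}, and that this vanishing is already established there whenever $\gamma$ contains a triangle, that is, when $\girth(\gamma) \leq 3$.

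Next, I would invoke the main technical result of this paper---the $4$-face formula---to prove the analogous vanishing $c_2(\gamma)^{dual}_q = 0$ when $\gamma$ is triangle-free but contains a $4$-cycle. The formula expresses the dual graph polynomial $\ph_\gamma$ around a distinguished $4$-cycle in a form that lets one control the Chevalley--Warning-type count of points of the associated hypersurface modulo $q^3$. Together with the triangle case, this settles duality admissibility for all sub-quotients of girth at most $4$.

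Third, I would bring in the loop bound: for a log-divergent graph $G$ with $h_G \leq 18$, every sub-quotient $\gamma$ appearing in the duality-admissibility test satisfies $\girth(\gamma) \leq 4$. This is where the number $18$ enters, and it follows from a cage / Moore-bound census of log-divergent graphs together with the fact that sub-quotients inherit enough of the loop and edge structure of $G$ that $\girth(\gamma) \geq 5$ would force a vertex count incompatible with $h_G \leq 18$ (the relevant extremal configurations being small cages such as the Robertson graph).

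The main obstacle is the second step: applying the $4$-face formula uniformly to every triangle-free, $4$-face-containing sub-quotient that arises, rather than only to $G$ itself. Sub-quotients are formed by a mixture of edge deletions and contractions, which can deform or destroy the distinguished $4$-face on which the formula is built. The crux of the argument is to exhibit, for each such $\gamma$, either a triangle (handled by \cite{D4}), a preserved $4$-face (handled by the formula), or an inductive reduction to a strictly smaller log-divergent graph to which the result already applies.
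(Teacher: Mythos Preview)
Your three-step outline matches the paper's proof exactly: (i) triangles via Proposition~\ref{lem19}, (ii) $4$-faces via Theorem~\ref{thm54}, (iii) the combinatorial bound (Lemma~\ref{lem62}) forcing every relevant sub-quotient to have girth at most~$4$.

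However, the ``main obstacle'' you describe is not an obstacle at all, and your proposed workaround would not make sense. The vanishing results Proposition~\ref{lem19} and Theorem~\ref{thm54} are stated for an \emph{arbitrary} graph $\gamma$ satisfying $N_\gamma > 2n_\gamma$ and containing a triangle (resp.\ a $4$-face); they make no reference to any ambient graph $G$. There is therefore nothing to ``track'' from $G$ down to a sub-quotient. The combinatorial step guarantees that each sub-quotient $\gamma$ possesses its \emph{own} short cycle, and one simply applies Proposition~\ref{lem19} or Theorem~\ref{thm54} to $\gamma$ directly. That is the entire proof.

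Your proposed fallback --- an ``inductive reduction to a strictly smaller log-divergent graph'' --- is also based on a misconception. The sub-quotients $\gamma = G\backslash I\q J$ with $|J|>|I|$ are \emph{never} log-divergent: one has $N_\gamma = 2n_G - |I| - |J|$ and $n_\gamma = n_G - |J|$, hence $N_\gamma - 2n_\gamma = |J| - |I| > 0$. This strict inequality is precisely the hypothesis needed in Proposition~\ref{lem19} and Theorem~\ref{thm54}, so no induction on log-divergent graphs is available or needed.
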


Hence, for all these graphs (\ref{b3}) holds (see Theorem \ref{thm5n4}). The indication of the bound $h_G\leq 18$ comes from the fact that  the first minimal log-divergent graph with $\girth=5$ has 18 vertices. The Feynman periods are computed only for graphs up to 8 loops (and for several 9-loop graphs), as well as for the several infinite series of graphs like $WS_n$, $ZZ_n$, which have $\girth=3$. Thus, we cover all the interesting Feynman graphs so far. On the other hand, the graphs with $\girth(G)=4$ enter the game since, for example, one of the first counter-examples to Kontsevich conjecture on the number of rational points on graph hypersurfaces was a graph with 7 loops and girth 4, see \cite{D2}, \cite{Sch2}. In addition to Theorem above, we formulate a sufficient combinatorial criterion for an arbitrary graphs ($h_G\geq 3$) to be duality admissible, see Theorem \ref{thm5n5}

In Section 2, we introduce a new algebraic way of understanding the dual graph polynomials $\ph_G$: we do not use the Dodgson polynomials for $\Psi_G$ with inverted variables (Cremona transformation), but we introduce $\ph_G$ and the dual Dodgson polynomials as minors of a certain matrix $L_G$. This leads to a better control of the sings in the formulas and to an independent picture of dual graph polynomials situation from that one of the graph polynomials. 

The computational technique is presented in Section 3, as well as the known or intuitive results related to graphs with triangles. The proved facts are very similar to the case of the graphs hypersurface itself.  
We work in the Grothendieck ring of varieties $K_0(Var_k)$ and then jump to the computation for the number of $\FF_q$-rational points since we are going to intensively use the Chevalley-Warning vanishing. The most complicated and technical computations explaining the 4-face situation are moved to Section 4. 

The main result is stated in Section 5.

\underline{Acknowledgements}: I would like to thank MPIM Bonn for hospitality and for the financial support.

\section{Dual graph polynomials}
From some point, studying the dual graph polynomials, we follow the strategy of Section 2 of \cite{Br} and prove the corresponding statements to the theorems for graph polynomials given there. We usually identify a graph with it's set of edges.

Consider a connected graph $G$. For the two free $\ZZ$-modules labelled with the set of edges $E=E(G)$ and the set of vertexes $V=V(G)$, define the map $\partial:\ZZ^E \rightarrow \ZZ^V$ : $ e \mapsto v_t-v_s$, where $v_s$ and $v_t$ are the source and the target of the edge $e$ respectively. We extending the map by linearity and get a homological sequence 
\begin{equation}
0\rightarrow H_1(G,\ZZ)\rightarrow \ZZ^E\overset{\partial}{\rightarrow}\ZZ^V\rightarrow H_0(G,\ZZ)\rightarrow 0.
\end{equation}
\begin{definition}\label{d4}
We call a set $C=\{c_1,\ldots c_h\}$, $c_i\in H_1(G,\ZZ)$ the \emph{basis of small cycles} of $G$, if the following conditions are satisfied:
\begin{description}
\item{i).} Each $c_i$ is a pre-image of an (oriented) cycle (topological loop).
\item{ii).} The set of $c_i$s generates $H_1(G,\ZZ)$.
\item{iii).} if $c_i+\sum_{j\neq i} \lambda_j c_j=\lambda c$ 
for some $i\leq h$, $c\in H_1(G,\ZZ)$, $\lambda,\lambda_j\in\ZZ$, $\lambda\neq 0$, then $\lambda=\pm 1$. 
\end{description}
\end{definition}
Since $G$ is connected, it follows that $H_0(G,\ZZ)\cong  \ZZ$ and then $h_G=h_1(G):=\rank H_1(G,\ZZ)=N_G-|V|+1$ is called the loop number.
For a generating set $C$ of $H_1(G,Z)$ satisfying the conditions (i) and (ii) of the definition, we construct the following $h_G\times N_G -$matrix $F=F_C$: $F_{i,j}$ equals 1 if the edge $e_j$ belongs to the cycle $c_i$ and the orientation of the edge and that of $c_i$ coincide, and equals $-1$ if the orientations are different, and equals 0 in the case the edge does not belong to $c_i$.
\begin{lemma}
Fix a basis of small cycles $C$ of a graph $G$. Let $F_C(T)$ be the square matrix that we get from $F_C$ after deletion of the columns labelled by $T$, $T\subset E(G)$. Let $T$ be a set of $N_G-h_G$ edges of $G$. Then
\begin{equation}
\det F_C(T)= \left\{ \begin{aligned}
\pm &1& \text{if T is a spanning tree},\\ &0&\text{otherwise}.
\end{aligned}\right.
\end{equation}
\end{lemma}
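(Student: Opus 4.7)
The plan is to handle both cases by relating the basis $\{c_i\}$ to the fundamental cycles of a spanning tree. First I note that the hypothesis $|T|=N_G-h_G=|V|-1$ matches the number of edges of a spanning tree, so the complement $T^c:=E(G)\setminus T$ has exactly $h_G$ elements and $F_C(T)$ is a square $h_G\times h_G$ matrix. Moreover, condition (ii) of Definition \ref{d4} says that the map $\ZZ^{h_G}\to H_1(G,\ZZ)$ sending the standard basis to $C$ is surjective; since source and target are free abelian of the same rank $h_G$, it is an isomorphism. Hence $\{c_1,\ldots,c_{h_G}\}$ is in fact a $\ZZ$-basis of $H_1(G,\ZZ)$ (and condition (iii) then follows automatically). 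This $\ZZ$-basis property is what lets one conclude $\det=\pm 1$ below rather than merely a nonzero rational.

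\smallskip

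Suppose first that $T$ is a spanning tree. For each $e\in T^c$ let $\gamma_e\in H_1(G,\ZZ)$ be the associated fundamental cycle: it consists of $e$ together with the unique path in $T$ joining its endpoints, oriented so that $e$ is traversed with coefficient $+1$. The set $\{\gamma_e\}_{e\in T^c}$ is a $\ZZ$-basis of $H_1(G,\ZZ)$ -- independence is clear since $e$ appears only in $\gamma_e$, and generation follows because any integral $1$-cycle can be corrected by fundamental cycles until it is supported on $T$, hence zero (a tree has no cycles). Let $\tilde F$ denote the $h_G\times N_G$ matrix whose rows are the $\gamma_e$; the submatrix $\tilde F(T)$ consisting of columns indexed by $T^c$ is then diagonal with entries $\pm 1$. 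Expressing each $c_i$ in the basis $\{\gamma_e\}$ yields a matrix $A\in\mathrm{GL}_{h_G}(\ZZ)$ with $F_C=A\tilde F$, and therefore $\det F_C(T)=\det A\cdot\det\tilde F(T)=\pm1$.

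\smallskip

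Suppose now that $T$ is not a spanning tree. Since $(V,T)$ has $|V|-1$ edges but is not a tree, it is disconnected and some component must contain a simple cycle $\gamma$ supported entirely on $T$. Viewed in $H_1(G,\ZZ)$ this $\gamma$ is nonzero, so writing $\gamma=\sum_i\mu_ic_i$ produces integers $\mu_i$, not all zero. The $j$-th edge coordinate of $\gamma$ equals $\sum_i\mu_iF_{i,j}$ and vanishes for every $j\in T^c$; this is a nontrivial $\ZZ$-linear dependence among the rows of $F_C(T)$, so $\det F_C(T)=0$. The argument contains no real obstacle; the only delicate point is the $\ZZ$-basis observation at the outset, needed to upgrade ``nonzero determinant'' to ``determinant $\pm 1$'' in the spanning-tree case.
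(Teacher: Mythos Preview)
Your proof is correct and takes a genuinely different route from the paper's. The paper argues by row-reducing $F_C(T)$ over $\ZZ$ and splitting into three cases (upper-triangular with $\pm 1$ diagonal, dependent rows, upper-triangular with a diagonal entry $|\lambda|\geq 2$), invoking condition (iii) of Definition~\ref{d4} explicitly to rule out the third case. You instead observe at the outset that (ii) alone already forces $C$ to be a $\ZZ$-basis of $H_1(G,\ZZ)$ (a surjection $\ZZ^{h_G}\twoheadrightarrow\ZZ^{h_G}$ is an isomorphism), so (iii) is redundant; then you compare $F_C$ to the fundamental-cycle matrix $\tilde F$ attached to the spanning tree $T$ via a change-of-basis $A\in\mathrm{GL}_{h_G}(\ZZ)$, reducing the spanning-tree case to the trivial computation $\det\tilde F(T)=1$. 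For the non-spanning-tree case both arguments are essentially the same: a cycle supported on $T$ gives a row dependence.

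Your approach is cleaner and more conceptual; it also makes explicit that condition (iii) in the definition is superfluous, which the paper's argument obscures by actually using it. The paper's row-reduction argument, on the other hand, is more self-contained in that it does not appeal to the Hopfian property of $\ZZ^{h_G}$ and works directly with the matrix entries.
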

\begin{proof}
We fix a subgraph $T$ with $N_G-h_G$ edges. Doing the elementary row operations (over $\ZZ$) of the matrix $F_C$ we try to make $F_C(T)$ upper-triangular and can end up with one of the following three cases. \\
1) The matrix $F_C(T)$ (after the possible interchange of rows) becomes an upper-triangular matrix, the rows of $F_C(T)$ are linearly independent (over $\ZZ$) with diagonal entries $\pm 1$. Then $\det F_C(T)=\pm 1$. Assume that $T$ is not a spanning tree. Since $T$ has cardinhality  $N_G-h_G=|V|-1$, it follows that it is not a tree and has a loop $c'\subset T$. This loop gives us an element of $H_1(G,\ZZ)$ linearly independent of the rows of $F_C(T)$, this contradicts the assumption on the rank of $H_1(G,\ZZ)$. \\
2) The rows of $F_C(T)$ are linearly dependent. Then $\det F_C(T)=0$. Since $C$ generates $H_1(G,\ZZ)$, there is a linear combination $\sum \lambda_i e_i=0$ with not all coefficients equal zero, where the summation goes over the edges of $T$. This is impossible in the case $T$ is a spanning tree. To see this, consider a leaf with a non-zero coefficient or a vertex with no non-zero coefficients of the vertices below, this vertex cannot cancel out with something else in the sum above, so the sum cannot lie in the kernel of $\partial$.\\
3) Consider now the case $F_C(T)$ is upper-triangular, but not all diagonal entries are equal to $\pm1$. We can assume that we have   $\lambda e-\sum \lambda_i e_i=0$ in $H_1(G,\ZZ)$ for $e_i\in T$, $\lambda_i\in \ZZ$ and for some $e\in E(G)\backslash T$, $\lambda\geq 2$. It follows that each $\lambda_i$ is divisible $\lambda$. Indeed, since $T$ is a spanning tree and $e\not\in T$, there is a path with endpoints the same as endpoints of $e$. The pre-image of this cycle together with  the relation above give us a linear relation between edges in $T$ if not all $\lambda_i$ are $\pm\lambda$, but then $T$ cannot be a spanning tree (see case (2)). Thus, our elementary transformation yields an element $\lambda c'$ for $c'\in H_1(G,\ZZ)$, this contradicts the choice of $C$ (part (iii) of the Definition \ref{d4}) and case (3) never happens. 
\end{proof}

\begin{proposition}
Let $G$ be a connected graph. Then there exist a basis of small cycles of $G$.
\end{proposition}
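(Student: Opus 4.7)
The plan is to construct an explicit basis of small cycles from a spanning tree. Since $G$ is connected, fix a spanning tree $T \subset E(G)$. For each edge $e \notin T$, the subgraph $T \cup \{e\}$ contains a unique simple cycle $\gamma_e$; orient $\gamma_e$ so that $e$ is traversed in its chosen direction. Let $c_e \in H_1(G, \ZZ)$ be the pre-image of this oriented topological loop; then condition (i) of Definition \ref{d4} is built into the construction. I then propose $C := \{c_e : e \notin T\}$ as the basis of small cycles.

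To verify (ii), I would show that $C$ generates $H_1(G, \ZZ)$. There are $|E| - |V| + 1 = h_G$ such edges, matching $\rank H_1(G, \ZZ)$, so generation will also yield $\ZZ$-linear independence. Given any $z = \sum_{e \in E} \mu_e\, e$ in $\ker\partial$, observe that for each $e \notin T$ the fundamental cycle $c_e$ has coefficient $\pm 1$ on $e$ and $0$ on every other non-tree edge. Hence subtracting a suitable $\ZZ$-combination of the $c_e$ from $z$ produces an element $z' \in \ZZ^T \cap \ker \partial$. Since $T$ is a tree, $\ker(\partial|_{\ZZ^T}) = 0$, so $z' = 0$ and $z$ lies in the $\ZZ$-span of $C$.

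Condition (iii) is then a formal consequence of $C$ being a $\ZZ$-basis. Fix $i$; in the quotient $H_1(G, \ZZ) / \langle c_j : j \neq i \rangle \cong \ZZ$ the class $\overline{c_i}$ is a generator. Reducing the identity $c_i + \sum_{j \neq i} \lambda_j c_j = \lambda c$ modulo $\langle c_j : j \neq i \rangle$ gives $\overline{c_i} = \lambda \overline{c}$ in $\ZZ$, forcing $\lambda = \pm 1$.

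I do not anticipate any real obstacle: the argument is the classical spanning-tree / fundamental-cycle recipe, and the nonstandard clause (i) is enforced automatically by the choice of the simple cycles $\gamma_e$. The only care needed is to orient each $c_e$ consistently so that it genuinely has coefficient $\pm 1$ on its defining non-tree edge, which is immediate from the construction. The definition is phrased so as to guarantee exactly the primitivity property used in the preceding lemma, and a spanning-tree basis supplies it on the nose.
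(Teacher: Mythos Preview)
Your proposal is correct and takes essentially the same approach as the paper: both fix a spanning tree $T$ and use the fundamental cycles $c_e$ for $e \notin T$ as the basis. The only cosmetic difference is that the paper verifies (ii) and (iii) by observing that the matrix $F_C(T)$ is diagonal with entries $\pm 1$, whereas you give the equivalent direct homological argument.
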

\begin{proof}
One way to construct a basis is the following. Fix a spanning tree $T$. As in part (3) of the previous lemma, for each edge $e\in E(G)\backslash T$ there exist a path $p(e)$ with endpoints exactly that of $e$ and consisting of the only edges of $T$. Then $e$ and $p(e)$ together form a cycle. In this way we construct a set $C$ of $h_G$ cycles. Building the matrix $F_C$, we see that $F_C(T)$ (modulo interchange of rows) is a diagonal matrix with entries $\pm 1$. Thus, rows of $F_C$ are linearly independent and satisfy parts (i) -- (iii) of Definition \ref{d4}, so $C$ is a basis of small cycles.
\end{proof}

From now on, for any given graph $G$, we choose and fix some basis of small cycles $C$, build a matrix $F_C$, and, omitting subscript $C$, write $F_G$ instead.  

We define 
\begin{equation}\label{d2}
L_G:=
\left(
\begin{array}{c|c}
\Delta(\alpha)& F_G^t\\
\hline
-F_G & 0 
\end{array}
\right) \in \Mat_{N_G+h_G,N_G+h_G}(\ZZ[\{\alpha_i\}_{i\in E(G)}]),
\end{equation}
where $\Delta(\alpha)$ is the diagonal matrix with entries $\alpha_1,\ldots,\alpha_{N_G}$. Here and later, we often identify edges with their indices $E(G)=\{1,\ldots,N_G\}$.
For a graph $G$, we write $G\backslash I$ (resp. $G\q J $) for the graph that we get after deletion (resp. contraction) of the edges of the set $I\subset E(G)$ (resp. $J\subset E(G)$).

\begin{proposition}\label{prop4}
Let $G$ be any connected graph.\\
\textbf{i)} For the dual graph polynomial defined by (\ref{d1}), one obtains 
\begin{equation}
\ph_G=\det L_G.
\end{equation} 
\textbf{ii)} One has the contraction-deletion formula
\begin{equation}
\ph_G=\ph^e_G\alpha_e+\ph_{G,e}
\end{equation}
for any edge labelled by $e$, where the coefficients are again the dual graph polynomials $\ph^e_G=\ph_{G \q e}$ and $\ph_{G,e}=\ph_{G\backslash e}$. The contraction of an edge $e$ corresponds to the determinant of the matrix $L_G$ after deletion of the $e$-th row and column, and deletion of an edge corresponds to setting $\alpha_e$ to zero:
\begin{equation}
\ph_{G \q e}=\det L_G(e,e),\quad \ph_{G\backslash e}=\det L_G|_{\alpha_e=0}.
\end{equation}
\end{proposition}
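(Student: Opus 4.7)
The plan is to prove (i) by a Schur-complement-style block triangularization of $L_G$ combined with the Cauchy-Binet formula and the preceding lemma, then to deduce (ii) by inspecting where $\alpha_e$ appears in $L_G$.

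For (i), I pass to the fraction field $\QQ(\alpha_1,\ldots,\alpha_{N_G})$ and perform the block row operation of adding $F_G\,\Delta(\alpha)^{-1}$ times the upper block of rows of $L_G$ to the lower block. This annihilates the $-F_G$ in the lower-left and replaces the lower-right $0$ by $F_G\,\Delta(\alpha)^{-1}F_G^t$, producing a block upper-triangular matrix with the same determinant. Hence
\[
\det L_G \;=\; \det\Delta(\alpha)\cdot\det\bigl(F_G\,\Delta(\alpha)^{-1}F_G^t\bigr) \;=\;\Bigl(\prod_{i=1}^{N_G}\alpha_i\Bigr)\det\bigl(F_G\,\Delta(\alpha)^{-1}F_G^t\bigr).
\]
Cauchy-Binet applied to the factorization $F_G\,\Delta(\alpha)^{-1}F_G^t=(F_G\,\Delta(\alpha)^{-1})\cdot F_G^t$ of an $h_G\times h_G$ product, after reindexing the $h_G$-subset of chosen columns by its complement $T\subset E(G)$, yields
\[
\det\bigl(F_G\,\Delta(\alpha)^{-1}F_G^t\bigr)\;=\;\sum_{|T|=N_G-h_G}\bigl(\det F_G(T)\bigr)^2\prod_{i\notin T}\alpha_i^{-1}
\]
in the notation of the previous lemma. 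That lemma gives $(\det F_G(T))^2\in\{0,1\}$, equal to $1$ exactly when $T$ is a spanning tree. Multiplying through by $\prod_i\alpha_i$ converts $\prod_{i\notin T}\alpha_i^{-1}$ into $\prod_{i\in T}\alpha_i$, producing $\det L_G=\sum_{T\ \text{sp.tr.}}\prod_{i\in T}\alpha_i=\ph_G$. This is a polynomial identity in $\ZZ[\alpha]$, so the excursion through $\QQ(\alpha)$ is harmless.

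For (ii), the combinatorial formula $\ph_G=\alpha_e\,\ph_{G\q e}+\ph_{G\setminus e}$ follows immediately from (\ref{d1}) by splitting spanning trees of $G$ according to whether they contain $e$: those that do correspond bijectively to spanning trees of $G\q e$ (each contributing the factor $\alpha_e$), those that do not correspond to spanning trees of $G\setminus e$. On the determinantal side, $\alpha_e$ occurs in $L_G$ only at the diagonal entry $(e,e)$, so Laplace expansion along row $e$ gives $\det L_G=\alpha_e\,\det L_G(e,e)+\det L_G|_{\alpha_e=0}$. Matching these two linear-in-$\alpha_e$ expressions via (i) identifies $\ph_{G\q e}$ with $\det L_G(e,e)$ and $\ph_{G\setminus e}$ with $\det L_G|_{\alpha_e=0}$.

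The only delicate point is the sign bookkeeping in (i): Cauchy-Binet produces squared determinants $(\det F_G(T))^2$, which land cleanly in $\{0,1\}$ by the preceding lemma and so bypass any per-term sign discussion. A pleasant side-effect of routing (ii) through (i) is that we never need to choose a basis of small cycles for $G\q e$ and verify that it reproduces the submatrix $L_G(e,e)$; the polynomial identity from part (i) does this matching for free.
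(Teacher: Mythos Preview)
Your proof is correct. The route for part~(i) differs from the paper's in presentation: the paper expands $\det L_G$ directly by multilinearity in the first $N_G$ rows, obtaining
\[
\det L_G=\sum_{T\subset E(G)}\Bigl(\prod_{i\in T}\alpha_i\Bigr)\det\!\begin{pmatrix}0&F^t(T)\\-F(T)&0\end{pmatrix},
\]
and then observes the block determinant vanishes unless $|T|=N_G-h_G$, in which case it equals $(\det F(T))^2$. You instead pass to the fraction field, block-triangularize to reach $\det\Delta(\alpha)\cdot\det(F_G\Delta(\alpha)^{-1}F_G^t)$, and invoke Cauchy--Binet. The two computations meet at the same intermediate formula $\sum_{|T|=N_G-h_G}(\det F_G(T))^2\prod_{i\in T}\alpha_i$; the paper's expansion stays entirely in $\ZZ[\alpha]$ and avoids clearing denominators, while your Schur-complement framing is perhaps more recognizable to readers who know the analogous matrix-tree identity for $\Psi_G$. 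For part~(ii) your argument is essentially the same as the paper's, though you spell out the bijection of spanning trees and the linearity-in-$\alpha_e$ step a bit more explicitly. Your closing remark that routing~(ii) through~(i) bypasses having to choose a compatible basis of small cycles for $G\q e$ is a nice observation that the paper leaves implicit.
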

\begin{proof}
One computes 
\begin{equation}\label{c14}
\det L_G=\sum_{T\subset G} \prod_{i\in T} \alpha_i \det\!\left(
\begin{array}{c|c}
0& \!F^t(T)\!\\
\hline
\!-F(T)\! & 0 
\end{array}
\right)=\sum_{\!\!\!\!\!T\subset G,|T|=h} \prod_{i\in T}\alpha_i\det F(T)^2
\end{equation}
In the middle matrix for both cases $|T|>h$ and $|T|<h$ the rows of the matrix become linear dependent, thus the determinant is zero. For the remaining summands, where $|T|=h$, we apply lemma above:  $\det F(T)^2=1$ if $T$ is a spanning tree, and zero otherwise. The second statement of the proposition follows from the contraction-deletion formula and an observation that the determinant $\det L_G$ is linear in $\alpha_e$ with the corresponding coefficients. The second part of the theorem follows directly from (\ref{d1}).
\end{proof}

For a matrix $M$, we write $M(I,J)$ for the minor that we get after deletion of the rows indexed by the set $I$ and of the columns indexed by $J$. 
\begin{definition}
Let $I,J,K$ be subsets of edges of $G$ such that $|I|=|J|$. Define the \textit{dual Dodgson polynomial} to be
\begin{equation}
\ph^{I,J}_{G,K}:=\det L_G(I,J)|_{\{\alpha_e=0,k\in K\}}.
\end{equation}
\end{definition}

On easily sees that $\ph^{I,J}_{G,K}=\ph^{J,I}_{G,K}$ and $\deg \ph^{I,J}_{G,K}=N_G-h-|I|$. Using the propositon above, one also gets
\begin{equation}
\ph^{I,J}_{G\backslash B\q A, K}=\ph^{I\cup A,J\cup A}_{G,K\cup B}
\end{equation}
for any $A,B\subset E(G)$. Thus we usually consider the case $I\cap J =\emptyset$ and $K=\emptyset$.

\begin{proposition}
With the notation above, one gets
\begin{equation}
\ph^{I,J}_{G,K}=\sum_{T\subset G}(\pm)\prod_{e\in T}\alpha_e,
\end{equation}
where the sum goes over all subgraphs $T\subset G$ which are simultaneously spanning trees for both $G\backslash( K\cup I\backslash (I\cap J))\q J$ and $G\backslash( K\cup J\backslash (I\cap J))\q I$. In particular, every monomial in $\ph^{I,J}_{G,K}$ also occurs in both $\ph^{I,I}_{G,J\cup K}$
and $\ph^{J,J}_{G,I\cup K}$.
\end{proposition}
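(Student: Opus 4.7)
The plan is to adapt the proof of Proposition~\ref{prop4}(i) to the minor setting. Each variable $\alpha_e$ appears in $L_G$ only at the diagonal position $(e,e)$ of the $\Delta$-block, so the minor $\det L_G(I,J)|_{\{\alpha_e=0,\,e\in K\}}$ is a multilinear polynomial in $\{\alpha_e:e\in E(G)\setminus(I\cup J\cup K)\}$. Expanding by multilinearity -- equivalently, for each surviving edge-row choosing either the diagonal $\alpha_e$-entry (possible only when $e\notin J\cup K$) or an entry of the $F_G^t$-block -- yields
\begin{equation*}
\det L_G(I,J)|_K\;=\;\sum_{T}\epsilon_T\,\prod_{e\in T}\alpha_e\cdot\det\!\left(\begin{array}{c|c} 0 & F_G^t(I\cup T;\bullet)\\ \hline -F_G(\bullet;J\cup T) & 0\end{array}\right),
\end{equation*}
where $T$ ranges over subsets of $E(G)\setminus(I\cup J\cup K)$, $\epsilon_T=\pm1$ is a Leibniz sign, and $F_G^t(I\cup T;\bullet)$ (resp. $F_G(\bullet;J\cup T)$) denotes the submatrix obtained by deleting the rows in $I\cup T$ (resp. columns in $J\cup T$) and keeping all $h$ cycle columns (resp. rows).

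For the inner determinant to be nonzero, both off-diagonal blocks must be square of size $h$, which forces $|T|=N_G-h-|I|$. Under that constraint, the anti-diagonal block identity evaluates the inner determinant as $\pm\det F_G^t(I\cup T;\bullet)\cdot\det F_G(\bullet;J\cup T)$; after transposing the first factor, each is an instance of $\det F_C(S)$ with $|S|=N_G-h$. By the lemma preceding Proposition~\ref{prop4}, each factor equals $\pm1$ precisely when $I\cup T$ (respectively $J\cup T$) is a spanning tree of $G$, and vanishes otherwise. Hence only those $T$ with $T\cap(I\cup J\cup K)=\emptyset$ for which both $I\cup T$ and $J\cup T$ are spanning trees of $G$ contribute, each with a $\pm1$ coefficient.

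To obtain the graph-theoretic formulation in the statement, I use the standard bijection: spanning trees of $G\q J$ correspond to spanning trees of $G$ containing $J$, and additionally deleting $K\cup(I\setminus(I\cap J))$ restricts to those spanning trees also disjoint from this set. Since $T\cap(I\cup J\cup K)=\emptyset$ automatically forces $T$ to avoid $K\cup(I\setminus(I\cap J))$, the condition ``$J\cup T$ is a spanning tree of $G$ disjoint from $K\cup(I\setminus(I\cap J))$'' is equivalent to ``$T$ is a spanning tree of $G\setminus(K\cup I\setminus(I\cap J))\q J$'', and symmetrically for the $I$-contracted side. This is exactly the stated description of the monomials. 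The ``in particular'' claim is then immediate: if $\prod_{e\in T}\alpha_e$ occurs in $\ph^{I,J}_{G,K}$, then $I\cup T$ is a spanning tree of $G$ disjoint from $J\cup K$ and $|T|=N_G-h-|I|$, which is precisely the condition for the monomial to occur in $\ph^{I,I}_{G,J\cup K}$; the case $\ph^{J,J}_{G,I\cup K}$ is symmetric.

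The main obstacle will be bookkeeping the Leibniz sign $\epsilon_T$ together with the two signs $\det F_C(\cdot)=\pm1$, but since the statement only records the presence of surviving monomials with unspecified signs, this is purely administrative and does not affect the argument. A minor nuisance is the degenerate case $I\cap J\neq\emptyset$, in which for each $e\in I\cap J$ both the row and column are removed and $\alpha_e$ disappears entirely from the expansion; this matches the combinatorial side, where such edges are contracted via $J$ and therefore disjoint from $I\setminus(I\cap J)$.
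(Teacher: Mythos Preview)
Your proof is correct and follows essentially the same route as the paper: a multilinear expansion of $\det L_G(I,J)|_K$ along the diagonal $\alpha_e$-entries, leaving an anti-block determinant that factors as $\pm\det F_G(I\cup T)\det F_G(J\cup T)$, and then invoking the lemma that identifies the nonvanishing of each factor with $I\cup T$ (resp.\ $J\cup T$) being a spanning tree. The only cosmetic difference is that the paper first reduces to the case $I\cap J=\emptyset$, $K=\emptyset$ by passing to the sub-quotient $G\backslash K\q(I\cap J)$, whereas you carry the general $I,J,K$ through the expansion and handle the overlap $I\cap J$ at the end; both arrive at the same combinatorial description.
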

\begin{proof}
By passing to the minor $G\mapsto G\backslash (I\cap J)\q K$, we reduce to the case $I\cap J=\emptyset$ and $K=\emptyset$. Similar to (\ref{c14}), one computes 
\begin{multline}
\det(L_G(I,J))=\sum_{S\subset G\backslash (I\cup J)}\prod_{i\in S} \alpha_i \det \left(
\begin{array}{c|c}
0& \!F^t(S\cup I)\!\\
\hline
\!-F(S\cup J)\! & 0 
\end{array}
\right) = \\\sum_{S\subset G\backslash (I\cup J)}\pm \prod_{i\in S} \alpha_i \det F(S\cup I) \det F(S\cup J). 
\end{multline}
The term on the right survives iff $\det F(S\cup I)\neq 0 \neq F(S\cup J)$, thus, by Proposition \ref{prop4}, both $S\cup I$ and $S\cup J$ are spanning trees of $G$. Since $I\cap J = \emptyset$, $S$ should be a spanning tree for $G\backslash I \q J$, similar to  $G\backslash J \q I$. Conversely, such an $S$ gives $\det F(S\cup I)=\pm 1=F(S\cup J)$ by Proposition \ref{prop4}. 
\end{proof}
Later, we will drop the subscript $G$ in the dual graph and Dodgdon polynomials when $G$ is clear from the context. 

Recall the following Pl\"ucker identities: 
\begin{lemma}\label{lem7}
Let $M$ be an $N\times N$ symmetric matrix and let $i_1,\ldots, i_{2n}$ be distinct indices between $1$ and $N$. Then 
\begin{equation}
\sum_{k=n}^{2n} (-1)^k\det M(\{i_1,\ldots,i_{n-1},i_k\},\{i_n,\ldots,\hat{i}_k,\ldots,i_{2n}\}).
\end{equation}
\end{lemma}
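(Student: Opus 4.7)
The plan is to prove this as the classical linear Plücker-type identity for minors of a symmetric matrix. Symmetry of $M$ is essential: at $n = 1$, the identity reads $-\det M(\{i_1\}, \{i_2\}) + \det M(\{i_2\}, \{i_1\}) = 0$, which holds by $M = M^T$ together with $\det A = \det A^T$, but fails for a generic non-symmetric $M$.

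My preferred route uses Plücker coordinates on the Lagrangian Grassmannian. Form the $N \times 2N$ matrix $[I \mid M]$ with rows $a_i = (e_i, Me_i)$; by symmetry, the symplectic pairing $\omega(a_i, a_j) = M_{ij} - M_{ji}$ vanishes, so its row span $L \subset k^{2N}$ is an $N$-dimensional Lagrangian subspace for the standard symplectic form. Labelling the second copy of $\{1, \ldots, N\}$ with a star, expansion along the identity-block columns identifies $\det M(R, C) = \pm \, p_{R \cup (C^c)^*}$ as a Plücker coordinate of $L$. For $R_k = \{i_1, \ldots, i_{n-1}, i_k\}$ and $C_k = \{i_n, \ldots, i_{2n}\} \setminus \{i_k\}$, these become $p_{B \cup \{i_k, i_k^*\}}$ for a fixed $(N - 2)$-element set $B$, so the claim reduces to a linear relation among the $n+1$ Plücker coordinates obtained by adjoining one matched pair $\{i_k, i_k^*\}$ to $B$ --- exactly the type of linear relation cutting the Lagrangian Grassmannian $LG(N, 2N)$ out of $G(N, 2N)$. (For a sanity check: at $n=1$, $N=2$ the relation becomes $p_{13} + p_{24} = 0$, which one verifies holds on all of $LG(2,4)$ and fails off it.)

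A more hands-on alternative is combinatorial: expand each minor by the Leibniz rule and construct a sign-reversing pairing of the resulting monomials via the involution $M_{ab} \leftrightarrow M_{ba}$ combined with the swap $R_k \leftrightarrow C_k$. Since $\{i_1, \ldots, i_{n-1}\}$ lies in every $R_k$, the pairing just moves the remaining index $i_k$ between the row- and column-deletion sets, and the factor $(-1)^k$ is what makes the cancellation consistent across the $n+1$ partitions $R_k \sqcup C_k$ of $\{i_1, \ldots, i_{2n}\}$ that appear.

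The main obstacle in either approach is sign bookkeeping. The cancellation itself is routine once the Lagrangian picture (or the combinatorial pairing) is in place; matching the $(-1)^k$ of the lemma statement to the sign arising from reordering column labels when identifying a minor with a Plücker coordinate (Plücker route), or from the parity of the pairing permutation (combinatorial route), is where most of the detailed checking must go.
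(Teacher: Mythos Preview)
The paper does not actually prove this lemma: its entire proof is the single line ``See Lemma 27 in \cite{Br}.'' So there is no argument in the paper to compare against, and your proposal already goes well beyond what the paper supplies.

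Both strategies you outline are sound. The Lagrangian Grassmannian route is the conceptually cleanest: the row span of $[I\mid M]$ is Lagrangian precisely because $M$ is symmetric, and the identification $\det M(R,C)=\pm\,p_{R\cup (C^c)^*}$ is correct. Your observation that for $R_k$ and $C_k$ as in the statement one has $R_k\cup(C_k^c)^*=B\cup\{i_k,i_k^*\}$ for a fixed $(N-2)$-element $B$ is the key step, and the resulting relation is exactly one of the linear equations obtained by contracting the Pl\"ucker embedding with the symplectic form, which cut out $LG(N,2N)$ inside $G(N,2N)$. Note also that only the indices $j=i_n,\ldots,i_{2n}$ have both $j\notin B$ and $j^*\notin B$, so the sum over $k=n,\ldots,2n$ really is the full linear relation and not a truncation of it. What remains, as you say, is the sign bookkeeping matching your $\pm$ to the $(-1)^k$ of the statement; this is routine but must be done.

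Your proposal is a proof sketch rather than a finished proof, but the plan is correct and the missing detail (signs) is honestly flagged.
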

\begin{proof}
See Lemma 27 in \cite{Br}. 
\end{proof}
The matrix $L_G$ becomes symmetric after multiplication of some rows by $-1$, this allows us to apply the lemma above and to obtain
\begin{equation}\label{c25}
\sum_{k=n}^{2n}(-1)^k \ph_G^{\{i_1,\ldots,i_{n-1},i_k\},\{i_n,\ldots,\hat{i}_k,\ldots,i_{2n}\}}=0.
\end{equation}

We will also use the Jacobi determinant formula, 
\begin{lemma}
Let $M=(a_{ij})$ be an invertible $N\times N$ matrix and let $\adj M=(A_{ij})$ denote the adjoint matrix of $M$, i.e. the transpose of the cofactors of $M$. Then for any $k$, $1\leq k\leq N$,
\begin{equation}\label{c27}
\det (A_{ij})_{k\leq i,j\leq N}=\det(M)^{N-k-1}\det (a_{ij})_{1\leq i,j\leq N}.
\end{equation} 
\end{lemma}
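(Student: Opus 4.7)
The plan is to prove this Jacobi-type identity via a standard block-matrix manipulation built on the defining property $M \cdot \adj M = \det(M)\, I_N$. Split the indices as $\{1,\dots,N\} = \{1,\dots,k-1\} \sqcup \{k,\dots,N\}$ and write
$$M = \begin{pmatrix} A & B \\ C & D \end{pmatrix}, \qquad \adj M = \begin{pmatrix} A' & B' \\ C' & D' \end{pmatrix}$$
in this block form, so that the submatrix we want to compute is $D' = (A_{ij})_{k \le i,j \le N}$ of size $N-k+1$, and the complementary minor of $M$ is $A = (a_{ij})_{1\le i,j\le k-1}$. Expanding $M \cdot \adj M = \det(M)\, I_N$ into blocks yields in particular the two off-diagonal relations $AB' + BD' = 0$ and $CB' + DD' = \det(M)\, I_{N-k+1}$.

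Next I would introduce the auxiliary block matrix
$$P := \begin{pmatrix} I_{k-1} & B' \\ 0 & D' \end{pmatrix},$$
and compute $MP$ using the two block identities above; the product collapses to
$$M \cdot P = \begin{pmatrix} A & AB' + BD' \\ C & CB' + DD' \end{pmatrix} = \begin{pmatrix} A & 0 \\ C & \det(M)\, I_{N-k+1} \end{pmatrix},$$
which is block-lower-triangular. Taking determinants, $P$ is itself block-upper-triangular with $\det P = \det(D')$, while the right-hand side has determinant $\det(A) \cdot \det(M)^{N-k+1}$. Hence $\det(M) \cdot \det(D') = \det(A) \cdot \det(M)^{N-k+1}$, and dividing through by the nonzero scalar $\det(M)$ extracts the desired Jacobi relation between $\det(D')$ and $\det(A)$.

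The real content here is pure bookkeeping: one has to match the block decomposition of $\adj M$ to the cofactor convention (adjugate is the \emph{transpose} of cofactors) so that it is precisely the upper-right block of $M \cdot \adj M$ — and not of $\adj M \cdot M$ — that is the one annihilated, which is what triggers the key cancellation $AB' + BD' = 0$ above. No genuine obstacle arises; this is the classical Jacobi complementary-minor theorem, proved in the same spirit as the Plücker identity Lemma \ref{lem7} quoted earlier from \cite{Br}.
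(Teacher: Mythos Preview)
Your argument is correct and is precisely the classical block-matrix proof of Jacobi's complementary-minor theorem. The paper itself gives no proof at all --- it simply writes ``See Lemma 28 in \cite{Br}'' --- so there is nothing substantive to compare against; you have supplied a self-contained derivation where the paper only cites one.

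Two small remarks. First, your final identity $\det(M)\cdot\det(D')=\det(A)\cdot\det(M)^{N-k+1}$ yields $\det(D')=\det(M)^{N-k}\det(A)$ with $A=(a_{ij})_{1\le i,j\le k-1}$; this is the correct Jacobi formula, and it is what the paper actually uses (e.g.\ in the Dodgson special case (\ref{c30}) and in Example~\ref{Extriang}). The literal display (\ref{c27}) as printed has an evident typo in the index range and exponent, so do not be troubled that your exponent does not match it verbatim. Second, your closing caveat about needing $M\cdot\adj M$ rather than $\adj M\cdot M$ is overstated: both products equal $\det(M)I_N$, so either would give the required off-diagonal vanishing; the only genuine bookkeeping is choosing the column blocks $B',D'$ of $\adj M$ to pair with left-multiplication by $M$, which you do correctly.
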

\begin{proof}
See Lemma 28 in \cite{Br}. 
\end{proof}
The special case of this lemma $k=N-2$ is attributed to L. C. Dodgson: for any $1\leq p<q\leq N$ and $1\leq r<s\leq N$
\begin{equation}\label{c30}
A_{p,r}A_{q,s}-A_{p,s}A_{q,r}=\det(M)\det M(pq,rs).
\end{equation}

\begin{proposition}
Let $G$ be a connected graph and let $I$,$J$ be two subsets of edges with $|I| = |J|$ and let $a,b,c,d\in E(G)\backslash I\cup J$ and $S:=I\cup J\cup \{a,b,c,d\}$. Then the (first) Dodgson identity is
\begin{equation}\label{c33}
\ph^{Ia,Jb}_S\ph^{Ic,Jd}_S-\ph^{Ia,Jd}_S\ph^{Ic,Jb}_S=\pm \ph^{I,J}_S\ph^{Iac,Jbd}_S
\end{equation}
with $+$ sign when $(a-c)(b-d)>0$, and $-$ sign otherwise.\\
Now let $I$ and $J$ be two subset of edges with $|J|=|I|+1$ and let $a,b,c\not\in I\cup J$, $S:=I\cup J \cup \{a,b,c\}$. Then the second Dodgson identity is
\begin{equation}
\ph^{Ia,J}_S\ph^{Ibc,Jc}_S-\ph^{Iac,Jc}_S\ph^{Ib,J}_S=\pm \ph^{Ic,J}_S\ph^{Iab,Jc}_S.
\end{equation}
\end{proposition}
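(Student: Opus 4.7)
The plan is to derive both identities as consequences of the classical Jacobi--Dodgson determinant identity (\ref{c30}) applied to appropriate specialized minors of $L_G$, in the spirit of Brown \cite{Br}, Section 2, who proves the analogous statements for the usual graph polynomial $\Psi_G$. Every dual Dodgson polynomial appearing in the two identities is, after setting some $\alpha_e$ to zero, a minor of $L_G$; so the proof consists in (a) identifying the correct ambient specialized matrix, (b) reading off each $\ph^{?,?}_S$ as the determinant of a minor of that matrix, (c) invoking the classical identity, and (d) tracking signs.

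For the first identity, let $M := L_G(I, J)|_{\alpha_a = \alpha_b = \alpha_c = \alpha_d = 0}$. Then $M$ is square (since $|I|=|J|$) and the specialization makes sense ($\{a,b,c,d\} \cap (I \cup J) = \emptyset$). From the definition of dual Dodgson polynomials one reads off $\det M = \ph^{I,J}_S$, $\det M(\rho, \gamma) = \ph^{I\rho, J\gamma}_S$ for $(\rho, \gamma) \in \{a,c\} \times \{b,d\}$, and $\det M(ac, bd) = \ph^{Iac, Jbd}_S$; the point is that $\alpha_a, \alpha_b, \alpha_c, \alpha_d$ are exactly the diagonal entries of $L_G(I,J)$ that could distinguish these minors, and they have all been killed. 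Applying (\ref{c30}) to $M$ with the row pair $\{a, c\}$ and column pair $\{b, d\}$ then directly produces the first Dodgson identity. Since (\ref{c30}) requires $p < q$ and $r < s$, swapping $a \leftrightarrow c$ or $b \leftrightarrow d$ flips the sign of the left-hand side; hence the overall sign is $+$ precisely when $a, c$ and $b, d$ are in the same relative order, i.e.\ when $(a-c)(b-d) > 0$.

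For the second identity the matrix $L_G(I, J)$ is rectangular (one extra row, since $|I|+1 = |J|$), so (\ref{c30}) does not apply directly. Two natural routes suggest themselves. First, one can work with the square specialization $L_G(I, J \setminus \{j_0\})|_{\alpha_a = \alpha_b = \alpha_c = 0}$ for some auxiliary $j_0 \in J$, apply (\ref{c30}), and eliminate the resulting dependence on $j_0$ via the Pl\"ucker relation (\ref{c25}). Alternatively, one can apply the already-proved first Dodgson identity (\ref{c33}) twice with carefully chosen parameters and combine the two equations so that extraneous terms cancel, directly arriving at the six-term relation. Either route reduces the problem to a finite algebraic manipulation.

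The principal technical obstacle in both parts is sign tracking. The signs produced by (\ref{c30}) are $(-1)^{p+q+r+s}$, where $p, q, r, s$ are the \emph{positions} of the chosen rows and columns within the specialized matrix; these positions depend on $|I|, |J|$ and on the labels $a, b, c, d$ relative to those in $I \cup J$. Producing the clean label-only sign condition $(a-c)(b-d) > 0$ of the statement requires verifying that these position-dependent parities either cancel pairwise between the two products on the left-hand side or are absorbed into a consistent sign convention for $\ph^{I,J}_{G,K}$. For the second identity an additional layer of bookkeeping is needed, tracking how the auxiliary index $j_0$ (or the second application of (\ref{c33})) contributes and cancels; this is the step I expect to be the longest and most error-prone in the proof.
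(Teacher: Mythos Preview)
Your proposal is correct and follows essentially the same approach as the paper: the paper's proof consists of the single sentence ``The first part follows from (\ref{c30}) while the second part can be proved similarly to part (2) of Lemma 30 in \cite{Br},'' and your expansion---applying (\ref{c30}) to the specialized matrix $L_G(I,J)|_{\alpha_a=\alpha_b=\alpha_c=\alpha_d=0}$ for the first identity, and reducing the second to a standard determinantal manipulation as in Brown---is exactly what is intended. Your sign discussion is more detailed than the paper provides, and your two suggested routes for the second identity are both viable; Brown's Lemma~30(2) proceeds essentially by your second route (a direct algebraic combination via the first identity), so committing to that one would align most closely with the cited reference.
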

\begin{proof}
The first part follows from (\ref{c30}) while the second part can be proved similarly to part (2) of Lemma 30 in \cite{Br}.  
\end{proof}

Consider the Cremona transformation $\iota:\ZZ[\alpha_1,\ldots,\alpha_n]\longrightarrow\ZZ[\alpha_1,\ldots,\alpha_n]$ defined by 
\begin{equation}
 \iota(f)(\alpha):=\prod_{1\leq j\leq n} \alpha_i f\Big(\frac{1}{\alpha_1},\ldots,\frac{1}{\alpha_n}\Big) 
\end{equation}
for any homogeneous polynomial $f$.

\begin{lemma}\label{lem10}
\textbf{i)} Let $S,K\subset E(G)$ are subsets of edges of $G$, $S\cap K=\emptyset$. Then 
\begin{equation}
\ph^S_{G,K}=\iota(\Psi^K_{G,S}).
\end{equation}
\textbf{ii)} Let $i,j\subset E(G)$ are two edges of $G$,  $i,j\notin S\cup K$. Then the Dodgson polynomial is related to the dual Dodgson polynomial by Cremona transformation up to a sign:
\begin{equation}
\ph^{Si,Sj}_{G,K}=\pm\iota(\Psi^{Ki,Kj}_{G,S}).
\end{equation} 
\end{lemma}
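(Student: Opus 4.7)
\textbf{Plan for Lemma~\ref{lem10}.}

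\textbf{Part (i).} First I would unwind the definitions via the contraction--deletion formulas of Proposition~\ref{prop4}. Iterating these gives $\ph^S_{G,K}=\ph_{G\backslash K\q S}$ (for $\ph_G$, the superscript encodes contraction and the subscript deletion). The parallel matrix description of $\Psi_G$ using a reduced vertex--edge incidence matrix $E_G$ in place of $F_G$ (cf.\ Section~2 of \cite{Br}) yields the opposite convention $\Psi^e_G=\Psi_{G\backslash e}$, $\Psi_{G,e}=\Psi_{G\q e}$, so $\Psi^K_{G,S}=\Psi_{G\backslash K\q S}$. Both sides of~(i) therefore refer to graph polynomials of the same minor $H:=G\backslash K\q S$, and the identity reduces to the basic Cremona statement $\ph_H=\iota(\Psi_H)$. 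This is a one-line Schur-complement/Cauchy--Binet computation: $\Psi_H=\det(F_H\Delta(\alpha)F_H^t)$ and $\ph_H=\prod_e\alpha_e\cdot\det(F_H\Delta(\alpha)^{-1}F_H^t)$, so the substitution $\alpha_e\mapsto 1/\alpha_e$ in $\Psi_H$ followed by multiplication by $\prod_e\alpha_e$ produces $\ph_H$.

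\textbf{Part (ii).} Here the off-diagonal Dodgson polynomial is no longer a minor graph polynomial, and the reduction used for~(i) is not directly available. My plan is to compare the combinatorial expansions of both sides. On the $\ph$-side, the Cauchy--Binet expansion of $\det L_G(Si,Sj)|_{\alpha_e=0,\,e\in K}$ in the style of (\ref{c14}) yields $\ph^{Si,Sj}_{G,K}=\sum_{T}\pm\prod_{e\in T}\alpha_e$, where $T$ ranges over the set $\mathcal{T}$ of common spanning trees of $H_1:=G\backslash(K\cup\{i\})\q(S\cup\{j\})$ and $H_2:=G\backslash(K\cup\{j\})\q(S\cup\{i\})$. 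Running the parallel expansion for $\det M_G(Ki,Kj)|_{\alpha_e=0,\,e\in S}$ gives $\Psi^{Ki,Kj}_{G,S}=\sum_{T}\pm\prod_{e\notin T}\alpha_e$ over the same set $\mathcal{T}$: since deletion and contraction commute on disjoint edge sets, swapping the super/subscript roles between $\Psi$ and $\ph$ merely permutes the pair $(H_1,H_2)$. Applying $\iota$ to $\Psi^{Ki,Kj}_{G,S}$ turns each $\prod_{e\notin T}\alpha_e$ into $\prod_{e\in T}\alpha_e$, so the two sides share the same monomial support $\{\prod_{e\in T}\alpha_e : T\in\mathcal{T}\}$.

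\textbf{Main obstacle.} What remains is to show that the signs in front of each monomial on the two sides agree up to a single $T$-independent factor $\pm 1$, which is the $\pm$ in the claim. These signs are products of two sub-determinants $\det F_G(T\cup Si)\det F_G(T\cup Sj)$ (on the $\ph$-side) and the corresponding $\det E_G(\cdots)\det E_G(\cdots)$ (on the $\Psi$-side), together with permutation signs coming from reordering rows/columns into the block form used in Cauchy--Binet. The hard part is checking that the $T$-dependent contributions coincide on both sides. A cleaner alternative is to invoke the Jacobi determinant formula (\ref{c27}) (or its $2\times 2$ special case (\ref{c30})) on $L_G$ and $M_G$, which expresses $\det L_G(Si,Sj)$ and $\det M_G(Ki,Kj)$ in terms of diagonal minors of $L_G$ and $M_G$; applying part~(i) term by term to these reduced expressions then gives~(ii) with the signs matching automatically.
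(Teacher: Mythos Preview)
Your proposal is essentially correct and, once you settle on your ``cleaner alternative'', it coincides with the paper's proof. For part~(i) the paper simply invokes~(\ref{d1}) and Proposition~\ref{prop4}, which is precisely your reduction to $\ph_H=\iota(\Psi_H)$ for the minor $H=G\backslash K\q S$; your Cauchy--Binet/Schur justification is more explicit but amounts to the same spanning-tree count. For part~(ii) the paper first reduces to $S=K=\emptyset$ and then uses the $2\times 2$ Dodgson identity~(\ref{c30}) in the form $(\ph^{i,j})^2=\ph^i\ph^j\mp\ph\,\ph^{ij}$, together with the identical relation for $\Psi$; applying $\iota$ and part~(i) to the right-hand side (all terms there are diagonal Dodgson polynomials, i.e.\ genuine graph polynomials of minors) yields $(\ph^{i,j})^2=(\iota(\Psi^{i,j}))^2$, hence the claim up to sign. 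This is exactly your alternative route via~(\ref{c30}).

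One remark on phrasing: Dodgson only gives you an identity for the \emph{square} of the off-diagonal minor, so you do not get the signs ``matching automatically'' --- you only get agreement up to a global $\pm$, which is all the lemma asserts. Your first approach (matching monomial supports via common spanning trees and then chasing the $T$-dependent sign products $\det F_G(T\cup Si)\det F_G(T\cup Sj)$ versus $\det E_G(\cdots)\det E_G(\cdots)$) is a legitimate alternative and would in principle pin down the sign explicitly, but as you note it is genuinely delicate; the paper sidesteps this entirely by squaring.
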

\begin{proof}
The part \textbf{(i)} follows from (\ref{d1}) and Proposition \ref{prop4}. The statement in \textbf{(ii)} we can reduce to the case $K=S=0$. By the first Dodgson identity (\ref{c33}), we have 
\begin{equation}
\ph^{i,i}_2\ph^{j,j}_1 \pm \ph \ph^{ij,ij}=(\ph^{i,j})^2.
\end{equation}
The same identity with the same sign holds for a graph polynomial $\Psi$. Applying the Cremona transformation to both sides and using part \textbf{(i)}, one gets $\ph^{i,j}=\pm\iota(\Psi^{i,j})$.
\end{proof}
\begin{proposition}\label{prop11}
Let $A$, $B$, $I\subset E(G)$ be tree subsets of edges of a graph $G$, where $|A|$=$|B|$, $I=\{i_1,\ldots,i_k\}$ and $I\cap (A\cup B)=\emptyset$. If $\ph^{A\cup I, B\cup I}=0$, then for each $t$, $1\leq t\leq k$, we have
\begin{equation}\label{c40}
\ph^{Ai_t,Bi_t}=\sum _{s\neq t}\pm\ph^{Ai_t,Bi_s} =\sum_{s\neq t}\pm\ph^{Ai_s,Bi_t}
\end{equation} 
as elements in $\ZZ[\alpha_1,\ldots,\alpha_{N_G}]$.
\end{proposition}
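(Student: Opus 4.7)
The plan is to convert the hypothesis $\ph^{A\cup I, B\cup I}=0$ into a nontrivial null-space statement for a submatrix of $L_G$, and then extract the desired linear relation by a column substitution followed by Laplace expansion; the block structure of $L_G$ (the $\{-1,0,1\}$-entries of $F_G$ and the zero loop-loop block) is what ultimately yields the $\pm 1$ coefficients.

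First I would reduce to $A\cap B=\emptyset$ via the identity $\ph^{I,J}_{G\setminus B\q A,K}=\ph^{I\cup A,J\cup A}_{G,K\cup B}$ recalled after the definition of dual Dodgson polynomials, and set $N:=L_G(A,B)$, an $n\times n$ matrix with $n=N_G+h_G-|A|$. The hypothesis reads $\det N(I,I)=0$, so there is a nonzero vector $u$ with $N(I,I)u=0$; extend by zero on positions in $I$ to $\tilde u$. Because $\tilde u_{i_s}=0$ and the $i_s$-th row of $N$ is nonzero only at loop columns (entries $F_{\ell,i_s}\in\{-1,0,1\}$) and on its edge-diagonal (which vanishes against $\tilde u$), the product $N\tilde u$ is supported on rows in $I$: $N\tilde u=\sum_s c_s\, e_{i_s}$ with $c_s=\sum_{\ell\in L}F_{\ell,i_s}\tilde u_\ell$.

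Now form $N_t$ by substituting column $i_t$ of $N$ with $N\tilde u$. Writing $N_t=NT$, the matrix $T$ equals the identity off column $i_t$ and has $\tilde u$ as its $i_t$-column; since $\tilde u_{i_t}=0$, row $i_t$ of $T$ is identically zero, so $\det T=0$ and hence $\det N_t=0$. Laplace expansion of $\det N_t=0$ along column $i_t$ gives
\[
\sum_{s=1}^{k}(-1)^{i_s+i_t}\,c_s\,\det N(\{i_s\},\{i_t\})=0,
\]
which after identifying $\det N(\{i_s\},\{i_t\})$ with $\pm\ph^{Ai_s,Bi_t}$ becomes a $\ZZ[\alpha]$-linear relation $\sum_s \pm c_s\,\ph^{Ai_s,Bi_t}=0$, i.e.\ the skeleton of the second equality of the proposition. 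The first equality follows from the parallel argument applied to the left kernel of $N(I,I)$ — equivalently, to $N^t$ — using the symmetry of $L_G$ up to sign-flips of its loop rows.

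The crucial remaining step, and the main obstacle, is to normalize the coefficients $c_s$ so that they become $\pm 1$ in $\ZZ[\alpha]$. Choosing $u$ canonically as the vector of cofactors of $N(I,I)$ along a reference row $r_0$ outside $I$, a further Laplace expansion identifies $c_s=\pm\det N((I\setminus\{i_s\})\cup\{r_0\},I)$, a specific dual Dodgson minor. The remaining combinatorial-algebraic task is to show, using the hypothesis $\det N(I,I)=0$ together with the first Dodgson identity (\ref{c33}) applied to the appropriate index configurations, that these minors coincide up to sign as polynomials; after dividing out the common factor only the $\pm 1$ coefficients remain. This sign bookkeeping — a careful tracking of the cycle-basis orientations encoded in $F_G$ through the Laplace permutations — runs parallel to the corresponding step for ordinary Dodgson polynomials in \cite{Br}, with additional care for the sign flips intrinsic to $L_G$.
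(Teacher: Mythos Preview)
Your approach differs from the paper's, which simply invokes the Jacobi identity (\ref{c27}) in the spirit of Lemma~31 of \cite{Br}; you instead set up a kernel-vector/column-substitution argument. The skeleton of your computation is correct through the Laplace expansion, and you arrive at a genuine relation $\sum_s \pm c_s\,\ph^{Ai_s,Bi_t}=0$ with $c_s=\sum_{\ell\in L}F_{\ell,i_s}\tilde u_\ell$. But the proof is incomplete, and the gap you flag is not ``sign bookkeeping'': it is the heart of the matter.

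The difficulty is that your kernel vector $u$ is chosen over the fraction field (or as a cofactor row of $N(I,I)$), so its loop-components $\tilde u_\ell$ are \emph{polynomials}, and hence so are the $c_s$. Your proposed remedy --- identify $c_s$ with $\pm\det N((I\setminus\{i_s\})\cup\{r_0\},I)$ and then use (\ref{c33}) to show these minors coincide up to sign --- does not go through: the first Dodgson identity relates $2\times 2$ products of minors, not the minors themselves, and there is no mechanism here that forces $c_s=\pm c_{s'}$ as polynomials. Without that, you cannot divide out a common factor, and you have not even secured $c_t\neq 0$ for the specific $t$ in the statement.

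What your argument misses is that the hypothesis $\det N(I,I)=0$ is a \emph{polynomial} identity, not a pointwise one. Because the $\alpha$-monomials in the Cauchy--Binet expansion are distinct, this forces a rank drop in the \emph{constant} matrix $F_G$ with the appropriate columns removed. Concretely, there is an integer vector $v\in\ZZ^{h_G}$ with $v^t F_G$ supported on $I$; taking $\tilde u$ supported purely on the loop block with loop part $v$ gives $c_s=(v^tF_G)_{i_s}$, which is now an integer, and one then argues it is $\pm 1$ from the cycle structure (this is where Definition~\ref{d4}(i) and the combinatorics of the vanishing enter). The Jacobi-based route in \cite{Br} packages exactly this structural input; your generic-kernel approach discards it, which is why the normalisation step cannot be completed as you propose.
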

\begin{proof}
The proof uses the Jacobi identity (\ref{c27}) and is analogously to that of Lemma 31 of \cite{Br}. 
\end{proof}

\begin{corollary}
Assume that the edges $e_1,\ldots,e_n$ in $E(G)$ form a cycle. Then 
\begin{equation}\label{c42}
\ph^1=\sum_{j\neq 1} \lambda_j\ph^{1,j}\quad\text{with}\;\lambda_j=\pm 1. 
\end{equation}
\end{corollary}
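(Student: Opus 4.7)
The plan is to reduce the claim to Proposition \ref{prop11}. Take $A = B = \emptyset$, $I = \{e_1, \ldots, e_n\}$ (the edges of the cycle), and $t = 1$. The conclusion of Proposition \ref{prop11} then reads $\ph^{e_1, e_1} = \sum_{s = 2}^{n} \pm\, \ph^{e_1, e_s}$, which is exactly the desired identity (\ref{c42}) with $\lambda_s = \pm 1$. So the whole task reduces to verifying the hypothesis $\ph^{I, I} = 0$.

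To prove this vanishing, I would mimic the Laplace expansion used in the proof of Proposition \ref{prop4}. The minor $L_G(I, I)$ has the same block form as $L_G$ but with the $n$ rows and $n$ columns indexed by $I$ deleted. Expanding its determinant along the diagonal $\Delta$-block entries, each surviving term is indexed by a subset $T \subset E(G) \setminus I$ with $|T| = N_G - h_G - n$ (so that the residual cycle block is a square of size $h_G$) and takes the form
\begin{equation*}
\pm \prod_{i \in T} \alpha_i \cdot \bigl(\det F_G(I \cup T)\bigr)^{2},
\end{equation*}
where $F_G(I \cup T)$ denotes $F_G$ with the columns in $I \cup T$ deleted, following the convention of the lemma preceding Proposition \ref{prop4}. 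By that lemma, $\bigl(\det F_G(I \cup T)\bigr)^2$ equals $1$ when $I \cup T$ is a spanning tree of $G$ and vanishes otherwise. But $I$ already contains the cycle $\{e_1, \ldots, e_n\}$, so $I \cup T$ also contains a cycle and can never be a spanning tree; every summand therefore vanishes and $\ph^{I, I} = 0$.

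With the hypothesis established, Proposition \ref{prop11} immediately yields (\ref{c42}). I do not expect a substantial obstacle: the only step requiring care is the block-size bookkeeping in the Laplace expansion, which forces $|I \cup T| = N_G - h_G = |V(G)| - 1$, so that the comparison with spanning trees of $G$ is meaningful and the cycle obstruction in $I$ kills every term uniformly.
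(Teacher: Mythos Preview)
Your proposal is correct and follows essentially the same approach as the paper: both reduce to Proposition~\ref{prop11} with $A=B=\emptyset$ and $I=\{e_1,\ldots,e_n\}$, after verifying the hypothesis $\ph^{I,I}=0$. The paper states this hypothesis in one line (``contraction of all of the edges of a cycle gives the vanishing of the dual graph polynomial''), while you supply the explicit Laplace expansion showing every term involves $\det F_G(I\cup T)$ with $I\cup T$ containing a cycle; this is a welcome unpacking of the same fact, not a different argument.
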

\begin{proof}
The contraction of all of the edges of a cycle gives the vanishing of the dual graph polynomial:  $\ph^I_G=0$ for $I=\{1,\ldots,n\}$. Then (\ref{c40}) with $A=B=0$ implies the statement. 
\end{proof}

\begin{remark}
The corollary above implies the dual statement for the graph polynomial itself:  If the edges $e_1,\ldots,e_n$ form a cycle, then 
\begin{equation}
\Psi_1=\sum_{j\neq 1} \lambda_j\alpha_j\Psi^{1,j}\quad\text{with}\;\lambda_j=\pm 1. 
\end{equation}
Indeed, one only needs to apply the Cremona transformation to (\ref{c42}) and use Lemma \ref{lem10}. This statement was proved in \cite{BSY}, Propostion 24, using spanning forests polynomials. Our proof here is much more elementary.
\end{remark}
\begin{proposition}
Let $e_1,\ldots,e_n\in E(G)$ be the set of edges that form a corolla (have the same endpoint). Then 
\begin{equation}\label{c46}
\ph_1=\sum_{j\neq 1} \lambda_j\alpha_j\ph^{1,j}\quad\text{with}\;\lambda_j=\pm 1. 
\end{equation}
\end{proposition}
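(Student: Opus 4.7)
My plan is to reduce this identity to the analogous statement for the graph polynomial $\Psi$ and then translate it back via the Cremona involution, mirroring the Remark just above. Since the edges $e_1,\ldots,e_n$ share a common endpoint $v$, deletion of all of them isolates $v$, so $G\backslash I$ is disconnected for $I=\{1,\ldots,n\}$ and therefore admits no spanning trees; consequently
\[
\Psi^{I,I}_G \,=\, \Psi^I_G \,=\, \Psi_{G\backslash I} \,=\, 0.
\]
This is precisely the hypothesis, with $\Psi$ in place of $\ph$, of the $\Psi$-version of Proposition~\ref{prop11}, namely Lemma~31 of \cite{Br}. Applying it with $A=B=\emptyset$ and $t=1$ yields
\begin{equation}\label{corolpsi}
\Psi^{1} \,=\, \Psi^{1,1} \,=\, \sum_{j\neq 1}\mu_j\,\Psi^{1,j}, \qquad \mu_j = \pm 1.
\end{equation}

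To return to dual graph polynomials, I apply the Cremona involution $\iota$ on the full edge set $E(G)$ to both sides of (\ref{corolpsi}). The polynomial $\Psi^{1}=\Psi_{G\backslash 1}$ is independent of $\alpha_1$, so by Lemma~\ref{lem10}(i) combined with the definition of $\iota$ one obtains $\iota(\Psi^{1})=\alpha_1\,\ph_1$. Similarly, $\Psi^{1,j}$ is independent of both $\alpha_1$ and $\alpha_j$, so Lemma~\ref{lem10}(ii) gives $\iota(\Psi^{1,j})=\pm\,\alpha_1\alpha_j\,\ph^{1,j}$. Substituting these into (\ref{corolpsi}) and cancelling the common factor $\alpha_1$ produces the desired identity (\ref{c46}) with coefficients $\lambda_j=\pm 1$.

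The only delicate point in the argument is the bookkeeping of the ``missing variables'' under $\iota$: whenever the Cremona involution is applied on the full edge set to a polynomial that does not involve some variable $\alpha_k$, a compensating factor $\alpha_k$ appears. This is exactly the mechanism that turns the unweighted $\Psi$-identity (\ref{corolpsi}) into the $\ph$-identity weighted by the $\alpha_j$'s, in contrast to the cycle case where no such compensating factors appear on the dual graph side. Once this is handled, the $\pm 1$ signs require no further input.
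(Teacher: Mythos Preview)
Your proof is correct and follows essentially the same route as the paper: derive the $\Psi$-analogue $\Psi^1=\sum_{j\neq 1}\mu_j\Psi^{1,j}$ from the vanishing $\Psi^I=0$ (this is what the paper cites as the ``corresponding statement to (\ref{c42}) for graph polynomials'' from \cite{BSY}), and then pass to $\ph$ via the Cremona transformation and Lemma~\ref{lem10}. Your explicit treatment of the extra monomial factors arising when $\iota$ is applied on the full edge set to polynomials missing some variables is exactly the point the paper suppresses in its one-line proof, and it is handled correctly.
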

\begin{proof}
One obtains the result by dualizing the corresponding statement to (\ref{c42}) for graph polynomial (see \cite{BSY}, Remark 25) and by use of Lemma \ref{lem10}.
\end{proof}

\begin{example}\label{Extriang}
Consider a graph $G$ and assume that the edges $e_1$, $e_2$ and $e_3$ form a triangle. Choose the orientation of the triangle and orient the 3 edges in the corresponding way. Orienting the other edges arbitrarily, we fix a matrix $F_G$ and consider $\ph_G$ and the dual Dodgson polynomials. 

Since contraction of a loop leads to the vanishing of $\ph$, we get $\ph^{123}=0$. In addition to this, we also have $\ph^{12}_3=\ph^{23}_1=\ph^{13}_2$ since the deletion of one of the edges and contraction of the other two gives the same sub-quotient graph. The Jacobi identity (\ref{c27}) implies
\begin{equation}
\det \left( \begin{array}{ccc}
\ph^1 & \ph^{1,2} & \ph^{1,3} \\
\ph^{2,1} & \ph^2 & \ph^{2,3} \\
\ph^{3,1} & \ph^{3,2} & \ph^3 \end{array} \right) = 0.
\end{equation} 
By Proposition \ref{prop11}, we get $\ph^1=\ph^{1,2}-\ph^{1,3}$. These signs are fixed by the natural numeration of the edges in $F_G$. 
  
  Lets define $g_0:=\ph^{ij}_k$, $g_k:=(-1)^{j-i+1}\ph^{i,j}_k$, $g_{123}:=\ph_{123}$, where $\{i,j,k\}=\{1,2,3\}$.  

The identity above implies 
\begin{equation}
\ph^{12}_3\alpha_2+\ph^{13}_2\alpha_3+\ph^{1}_{23}= \ph^{13,23}\alpha_3+\ph^{1,2}_3+\ph^{12,23}\alpha_2-\ph^{1,3}_2.
\end{equation}
Working similarly with other rows of the matrix, we derive
\begin{equation}
g_0=\ph^{ij,jk}\quad \text{and}\quad \ph^{i}_{jk}=g_j+g_k.
\end{equation}
Now the dual graph polynomial $\ph_G$ takes the form
\begin{equation}\label{c51}
\ph_G=g_0(\alpha_1\alpha_2+\alpha_2\alpha_3+\alpha_1\alpha_3) + (g_2+g_3)\alpha_1+(g_1+g_3)\alpha_2+(g_1+g_2)\alpha_3  + g_{123}.
\end{equation}
By (\ref{c33}), we have the Dodgson identity $\ph^2_{13}\ph^3_{12}-\ph^{23}_1\ph_{123}=\ph^{2,3}\ph^{3,2}$. In our new notation, this reads $(g_1+g_3)(g_1+g_2)-g_0g_{123}=(g_1)^2$. Thus, 
\begin{equation}\label{c53}
g_0g_{123}=g_1g_2+g_2g_3+g_1g_3.
\end{equation}
The formulas in this example are identical to the case of a $3-$valent case for $\Psi_G$ in \cite{Br}, Example 32, and the situation is dual to the case of a triangle for $\Psi_G$ in Example 33, (loc.cit.).
\end{example}
Now we introduce our main geometrical object of interest. 
\begin{definition} 
For a graph $G$ with $N_G$ edges, define the dual graph hypersurface
\begin{equation} 
Z_G:=\cV(\ph_G)\subset\AAA^{N_G}_{\ZZ}.
\end{equation}
\end{definition}
Here we use the notation $\cV(f_1,\ldots,f_m)$ (resp. $\cV(\cI)$) to denote the variety defined by the vanishing of a set of polynomials $f_1,\ldots,f_m\in k[x_1,\ldots,x_n]$ (resp. of all the elements of an ideal $\cI$) in $\AAA^n$. The dimension of the ambient space is usually clear from the context. 

In the next sections we will try to understand the dual graph hypersurfaces by means of point-counting functions or the classes in the Grothendieck ring using the identities proved above.
\section{$K_0(Var_k)$ and $\FF_q$-rational points}
The essential results of the paper are formulated as some equalities and congruences between the numbers of $\FF_q$-rational points on the strata of the dual graph hypersurface. The stratification goes by successful elimination of the first few variables step by step in various orders. 

For a prime power $q$ and for an affine variety $Y$ defined over $\ZZ$, we define by $[Y]_q:=\# \bar{Y}(\FF_q)$ the number of $\FF_q$-rational points of $Y$ after extension of scalars to $\FF_q$. More o less, this means that $[f]_q$ is a number of solutions of $f=0$ in $\FF_q^n$ after taking the coefficients$\mod q$ for an affine hypersurface given by $f\in \ZZ[x_1,\ldots,x_n]$. Here and later, we use the shortcut $[f,\ldots,f_n]_q$ for $[\cV(f_1,\ldots,f_n)]_q$. We think of $[\cdot]_q$ as a function of $q$. Sometimes (but not in general), this function is a polynomial of $q$, when the arguments are some strata of the (dual) graph hypersurfaces discussed later for small graphs. Our main interest is the coefficient of $q^2$ of this function, the $c_2$ invariant (see (\ref{b2})). There are many graphs for which we know that $c_2(G)$ is constant, either 0 or 1, see \cite{BSY}. 

If one considers a closed subvariety $Y$ of a variety $X\subset\AAA^n_\ZZ$, then $[X]_q=[Y]_q+[X\backslash Y]_q$. This is the main relation we use during our computation. There is a space much more natural for this scissors relation, namely the Grothendieck ring of varieties over a field, $K_0(Var_k)$. It is much closer to the geometry of our varieties than the number of rational points. As a consequence, the computation on the level of $K_0(Var_k)$ can give us more information about poles of the Feynman differential form and about the period. The point-counting function factors through the Grothendieck ring. On the other hand, $K_0(Var_k)$ is very big, less tractable and, against our intuition, several reasonable statements analogues to the statements for point-counting functions fail in this ring, i.e. Chevalley-Warning vanishing. We prove a big part of our results on the level of the Grotendieck ring and shift to the computation of rational points only when we cannot avoid this. 

Define the Grothendieck ring of varieties over $k$, $K_0(Var_k)$ as the free group $\ZZ(Var_k)$ generated by all the varieties over $k$ after localization by the relation :  $X=Y+X\backslash Y$ for any variety $X$ and any closed subscheme $Y$. We denote by $[Y]$ the class of $Y$ in $K_0(Var_k)$. The ring structure is given by the product:  $[X]\cdot[Y]=[X\times_k Y]$. Define $\LL:=[\AAA^1]$ and $\1:=[Pt]$. 

By the factorisation of the point-counting function through the Grothendieck ring, $\LL$ is mapped to $q$ and $\1$ is mapped to 1. One can more or less ignore the use of $K_0(Var_k)$ just by thinking of our formulas as stated for the number of $\FF_q$-rational points by the substitution above.
  
In $K_0(Var_k)$, for the computation of the class of a variety given by the polynomials linear in one of the variables, one can try to eliminate that variable. The dual graph polynomial $\psi_G$ is linear in all of the variables, so the optimist may hope to get rid of the variables step by step.

\begin{lemma}\label{lemA17}
Let $f^1,f_1,g^1,g_1,h\in\ZZ[\alpha_2,\ldots,\alpha_n]$ be polynomials. Then, considering the varieties on the right hand side of the coming formulas to be in $\AAA^{n-1}$ and the varieties on the left to be in $\AAA^n$, 
\begin{enumerate}
\item for $f=f^1\alpha_1+f_1$, one has 
\begin{equation}\label{lin11}
[f,h]=[h] - [f^1,h] + [f^1,f_1,h]\LL,
\end{equation}
and, in particular, 
\begin{equation}\label{lin1}
[f]=\LL^{n-1} - [f^1] + [f^1,f_1]\LL.
\end{equation}
\item for $f=f^1\alpha_1+f_1$ and $g=g^1\alpha_1+g_1$, one has
\begin{equation}\label{lin21}
[f,g,h]=[f^1,f_1,g^1,g_1,h]\LL + [f^1g_1-g^1f_1,h] - [f^1,g^1,h].
\end{equation}
and
\begin{equation}\label{lin2}
[f,g]=[f^1,f_1,g^1,g_1] \LL+ [f^1g_1-g^1f_1] - [f^1,g^1] .
\end{equation}
\end{enumerate}
\end{lemma}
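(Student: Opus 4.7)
The plan is to stratify the ambient affine space $\AAA^n$ according to whether the leading coefficient of $\alpha_1$ in $f$ (and in $g$, for part (2)) vanishes, apply the scissor relation $[X] = [Y] + [X \setminus Y]$ in $K_0(Var_k)$, and collect terms. The guiding heuristic is that when the leading coefficient in $\alpha_1$ is nonzero, the linear equation has a unique solution for $\alpha_1$ and the fibre over $\AAA^{n-1}$ is a single point; when it vanishes, $\alpha_1$ becomes a free parameter contributing a factor of $\LL$, while the equation collapses to its constant part.

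For part (1), decompose $\cV(f, h) \subset \AAA^n$ as the disjoint union of $\cV(f, h) \cap \cV(f^1)$ and its complement $\cV(f, h) \setminus \cV(f^1)$. On the first piece $f$ reduces to $f_1$, so the piece is isomorphic to $\cV(f^1, f_1, h) \times \AAA^1$, giving $[f^1, f_1, h] \LL$. On the second piece $f^1$ is invertible, so the projection to $\cV(h) \setminus \cV(f^1)$ is an isomorphism, contributing $[h] - [f^1, h]$. Summing yields (\ref{lin11}); specializing to $h \equiv 0$, where $[h] = \LL^{n-1}$, gives (\ref{lin1}).

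For part (2), I would stratify by the four loci determined by the vanishing pattern of $(f^1, g^1)$. The closed stratum $\cV(f^1, g^1)$ makes $\alpha_1$ free and forces $f_1 = g_1 = 0$, contributing $[f^1, g^1, f_1, g_1, h] \LL$. The two mixed strata (where exactly one of $f^1, g^1$ vanishes) each collapse one equation to its constant term and use the other to determine $\alpha_1$, contributing $[f^1, f_1, h] - [f^1, f_1, g^1, h]$ and the symmetric term. On the open stratum $f^1 g^1 \neq 0$ both formulas for $\alpha_1$ must agree, i.e.\ $f^1 g_1 - g^1 f_1 = 0$; by inclusion-exclusion this yields $[f^1 g_1 - g^1 f_1, h] - [f^1 g_1 - g^1 f_1, f^1, h] - [f^1 g_1 - g^1 f_1, g^1, h] + [f^1 g_1 - g^1 f_1, f^1, g^1, h]$.

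The main technical step is the final simplification: on $\cV(f^1)$ the difference $f^1 g_1 - g^1 f_1$ restricts to $-g^1 f_1$, hence $\cV(f^1 g_1 - g^1 f_1, f^1, h)$ splits by one more scissor application as $[g^1, f^1, h] + [f_1, f^1, h] - [g^1, f_1, f^1, h]$, and symmetrically on $\cV(g^1)$; on $\cV(f^1, g^1)$ the polynomial $f^1 g_1 - g^1 f_1$ vanishes identically. Substituting these into the sum of all four stratum contributions, the terms $\pm[f^1, f_1, h]$, $\pm[g^1, g_1, h]$, $\mp[f^1, f_1, g^1, h]$, $\mp[f^1, g^1, g_1, h]$ cancel pairwise, leaving precisely $[f^1, g^1, f_1, g_1, h] \LL + [f^1 g_1 - g^1 f_1, h] - [f^1, g^1, h]$, which is (\ref{lin21}); setting $h \equiv 0$ delivers (\ref{lin2}). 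The only real obstacle is keeping signs and repeated intersections straight in this telescoping; everything else is a direct application of the stratification principle.
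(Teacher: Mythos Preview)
Your argument is correct and follows essentially the same approach as the paper: stratify by the vanishing of the leading coefficients $f^1$ (and $g^1$) and use the scissor relation. For part (2) the paper merely says the identities ``are proved similarly by eliminating the variable from the system and stratifying by the vanishing or non-vanishing of the coefficients'' with a reference to \cite{Sch2}, whereas you actually carry out the four-stratum decomposition and the telescoping cancellation explicitly; your bookkeeping checks out.
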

\begin{proof}
Equality (\ref{lin1}) follows from (\ref{lin11}) by putting $h=0$. For proving the equality (\ref{lin11}), consider the two cases $f^1=0$ and $f^1\neq 0$ separately. If $f^1=0$, then $f^1\alpha_1+f_1=0$ implies $f_1=0$ and $\alpha_1$ does not appear in the defining equations. So, $\cV(f,h)\cap\cV(f^1)$ is a trivial $\AAA^1$-fibration over $\cV(f^1,f_1,h)\subset\AAA^{n-1}$. If $f^1\neq 0$, then we evaluate $\alpha_1$ from $f^1\alpha_1+f_1=0$ and get an isomorphism between $\cV(h,f)\backslash \cV(h,f,f^1)\subset \AAA^n$ and $\cV(h)\backslash \cV(h,f^1)\subset \AAA^{n-1}$. One computes in $K_0(Var_k)$
\begin{equation}
[f,h]=[\AAA^1\times_k \cV(f^1,f_1,h)]+[\cV(h)\backslash \cV(h,f^1)].
\end{equation} 
Now the statement follows from the very definition of the classes in the Grothendieck ring. 

The equalities (\ref{lin21}) and (\ref{lin2}) are proved similarly by eliminating of the variable from the system and stratifying by the vanishing or non-vanishing of the coefficients. See, for example, \cite{Sch2}.
\end{proof}
\begin{proposition}\label{lemA18}
Let $G$ be a graph with $h_G\geq 2$. Then, the following holds:
\begin{description}
\item{\textbf{1)}} For some $c(G)\in K_0(Var_k)$
\begin{equation}
[Z_G] = c(G)\LL^2. 
\end{equation}
\item{\textbf{2)}} For some $b(G)\in K_0(Var_k)$ and for any edge $e_1$
\begin{equation}
[\ph^1_G,\ph_{G,1}] = b(G)\LL.
\end{equation}
\item{\textbf{3)}} For some $d(G)\in K_0(Var_k)$ and for any edges $e_1$, $e_2$
\begin{equation}
[\ph^{1,2}_G] = d(G)\LL.
\end{equation}
\end{description}
\end{proposition}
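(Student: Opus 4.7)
My plan is to prove the three parts simultaneously by strong induction on $N_G$, establishing them within each inductive step in the order (3), (2), (1). The essential tools will be the variable-elimination formulas of Lemma \ref{lemA17}, the minor identity $\ph^{I\cup A, J\cup A}_G = \ph^{I,J}_{G \q A}$ stated just after Definition 2.5, and a specialization of the first Dodgson identity (\ref{c33}).

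First I would handle Part 3. Writing $\ph^{1,2}_G = \ph^{13,23}_G \alpha_3 + \ph^{1,2}_{G,3}$ linearly in $\alpha_3$ and applying (\ref{lin1}) gives
\begin{equation*}
[\ph^{1,2}_G] = \LL^{N_G-3} - [\ph^{13,23}_G] + \LL \cdot [\ph^{13,23}_G, \ph^{1,2}_{G,3}].
\end{equation*}
The last term is manifestly divisible by $\LL$. Via the identification $\ph^{13,23}_G = \ph^{1,2}_{G \q e_3}$ from the minor identity, Part 3 for the smaller graph $G \q e_3$ (inductive hypothesis) gives $[\ph^{13,23}_G]$ also divisible by $\LL$. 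The pure power $\LL^{N_G-3}$ is divisible by $\LL$ as soon as $N_G \geq 4$; the smallest cases (the theta graph and other $N_G=3$ instances) are handled by direct computation.

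Next I would prove Part 2. Expanding both $\ph^1_G$ and $\ph_{G,1}$ linearly in $\alpha_2$ and applying (\ref{lin2}) requires understanding the cross-term $\ph^{12}_G \ph_{G,12} - \ph^1_{G,2} \ph^2_{G,1}$. Specializing (\ref{c33}) to $I=J=\emptyset$ and $a=2,b=1,c=1,d=2$, and using the symmetry $\ph^{1,2}_G = \ph^{2,1}_G$, I obtain
\begin{equation*}
\ph^{12}_G \, \ph_{G,12} - \ph^1_{G,2} \, \ph^2_{G,1} = \pm (\ph^{1,2}_G)^2,
\end{equation*}
so this cross-term contributes the class $[(\ph^{1,2}_G)^2] = [\ph^{1,2}_G]$, divisible by $\LL$ by Part 3 just established. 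The remaining correction $[\ph^{12}_G, \ph^2_{G,1}]$ is recognized via the minor formulas as $[\ph^1_{G \q e_2}, \ph_{G \q e_2, 1}]$, and is divisible by $\LL$ by Part 2 for the smaller graph $G \q e_2$ (inductive hypothesis). Part 1 then follows immediately: by (\ref{lin1}) applied to $\ph_G = \ph^1_G \alpha_1 + \ph_{G,1}$, the term $\LL \cdot [\ph^1_G, \ph_{G,1}]$ is divisible by $\LL^2$ by Part 2, the term $[\ph^1_G] = [Z_{G \q e_1}]$ is divisible by $\LL^2$ by Part 1 applied to $G \q e_1$ (the loop case $\ph^1_G \equiv 0$ being trivial), and $\LL^{N_G-1}$ is divisible by $\LL^2$ whenever $N_G \geq 3$, which is forced by $h_G \geq 2$ in the absence of self-loops.

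The main obstacle I anticipate is the bookkeeping in degenerate situations: when a chosen edge is a self-loop, or when a contraction drops the loop number below $2$, the inductive hypothesis of the target part does not literally apply. These must be handled either by choosing the eliminated edge to be a non-loop whenever possible, or by direct computation in truly degenerate cases (one-vertex graphs with self-loops). A secondary subtlety is the sign in the specialized Dodgson identity, which depends on the orientation of the chosen basis of small cycles; but since only the zero locus of $(\ph^{1,2}_G)^2$ enters the argument, the sign is immaterial.
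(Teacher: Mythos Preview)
Your proposal is correct and follows essentially the same approach as the paper: strong induction on $N_G$, reducing Part~1 to Part~2 to Part~3 via the variable-elimination formulas of Lemma~\ref{lemA17} together with the specialized Dodgson identity $(\ph^{1,2})^2 = \ph^1_2\ph^2_1 \pm \ph^{12}\ph_{12}$, and closing the induction for Part~3 by recognizing $\ph^{13,23}_G$ as a dual Dodgson polynomial of a smaller graph. Your identification of the relevant sub-quotient graphs as contractions $G\q e_i$ (rather than deletions) is in fact the correct reading of the minor formula $\ph^{I\cup A,J\cup A}_{G} = \ph^{I,J}_{G\q A}$, and your flagged degenerate cases (self-loops, drops in loop number) are exactly the side cases the paper handles.
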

\begin{proof}
The proof goes by induction on $N_G$. The statements can be easily verified for graphs with $N_G\leq 3$. Assume that for all graphs with $N_G<M$ both parts (1) -- (3) were proved. Consider the case $N_G=M\geq 4$. We start with the class of $Z_G$ and eliminate the first variable by use of Lemma \ref{lemA17}, part 1) :
\begin{equation}\label{d20} 
[Z_G]=[\ph^1\alpha_1+\ph_1]=[\ph^1,\ph_1]\LL+\LL^{N-1}-[\ph^1].
\end{equation}
If $e_1$ is a self-loop, then $\ph^1=0$. Otherwise, $\ph^1$ itself a dual graph polynomial: $\ph^1=\ph_{G'}$ for $G'=G\q 1$ with $N_{G'}=N_G-1<M$, hence $[\ph^1]=c(G')\LL^2$ by the induction hypothesis. If part (2) holds for $N_G=M$, then part (1) also holds. Indeed, 
\begin{equation}
[Z_G]=b(G)\LL\cdot\LL+ \LL^{N_G-1} + c(G')\LL^2=(b(G)+c(G')+\LL^{N_G-3})\LL^2.
\end{equation} 
Now we prove part (2).

Both $\ph^1$ and $\ph_1$ are linear in the variable $\alpha_2$. Lemma \ref{lemA17}, part 2) 
allows us to get rid of $\alpha_2$ on $\cV(\ph^1,\ph_1)$:
\begin{multline}\label{d22}
[\ph^1,\ph_1]=[\ph^{12}\alpha_2+\ph^1_2,\ph^2_1\alpha_2+\ph_{12}]=\\ [\ph^{12},\ph^1_2,\ph^2_1,\ph_{12}]\LL+[\ph^1_2\ph^2_1-\ph^{12}\ph_{12}]-[\ph^{12},\ph^2_1].
\end{multline}
If $e_2$ is a self-loop, then both $\ph^{12}$ and $\ph^2_1$ are zero polynomials and the divisibility holds trivial. If $e_1$ and $e_2$ form a 2-cycle, then $\ph^{12}=0$ and $[\ph^2_{G,1}]=[\ph_{G'}]$ for $G'=G\backslash 1\q 2$ the last graph has again $h_G\geq 2$ and the divisibility follows from part (1), or has $h_G=1$ and the situation is easy to verify manually. Otherwise, for $e_1$ and $e_2$ in more general position,  $[\ph^{12}_G,\ph^2_{G,1}]=[\ph^{1}_{G'},\ph_{G',1}]$ with $G'=G\backslash 2$, it is divisible by $\LL$ by the induction hypothesis. By the first Dodgson identity, $[\ph^1_2\ph^2_1-\ph^{12}\ph_{12}]=[\ph^{1,2}]$.  If part (3) is proved, then 
\begin{equation}
b(G):=[\ph^{12},\ph^1_2,\ph^2_1,\ph_{12}]+d(G')-b(G').
\end{equation}
It remains to prove part (3).

Consider an edge $e_3$. If $\ph^{1,2}$ is independent of $\alpha_3$, then $\cV(\ph^{1,2})$ and divisibility is clear. Otherwise, we use Lemma \ref{lemA17} for $\alpha_3$: 
\begin{equation}
[\ph^{1,2}]= [\ph^{13,23}\alpha_3\pm\ph^{1,2}_3]\\ =\LL^{N-3}-[\ph^{13,23}]+[\ph^{13,23},\ph^{1,2}_3]\LL.
\end{equation}
Since $\ph^{13,23}_G=\ph^{1,2}_{G'}$ for $G'=G\backslash 3$, using the induction hypothesis, one computes 
\begin{equation}
d(G)=\LL^{N-4}-d(G')+[\ph^{13,23},\ph^{1,2}_3].
\end{equation}
This concludes the proof.
\end{proof}

\begin{remark}  As mentioned above, the equations for $[\ph_G]$ similar to that one in the proposition above give us equations for $[\ph_G]_q$ since point-counting functor factors through $K_0(Var_k)$ (or just by repeating all the steps). For example, under same conditions as in the proposition, $q^2|[\ph_G]_q$ and $q|[\ph^1_G,\ph_{G,1}]_q$.
\end{remark}
After the remark above, we are allowed to make the following definition. 
\begin{definition}
Let $G$ be a graph with $h_G\geq 2$. Define the $c_2$ invariant in dual parametric space: 
\begin{equation}
c_2(G)^{dual}:=[Z_G]_q/q^2 \mod q.
\end{equation}
\end{definition}
This $c_2$ invariant is the essential part the point-counting function, and, similar to $c_2(G)$ in (\ref{b2}), it satisfies many good properties. The most interesting of them is the coincidence of $c_2^{dual}(G)$ and $c_2(G)$ on the log-divergent graphs, see Theorem \ref{thm11} and Theorem \ref{thm53}. 
 
There is a more concrete description of the element $c(G)$ from Proposition \ref{lemA18}, if one has a cycle of length $\leq 3$. The most interesting case is when it is a cycle of length 3, a triangle.

Let $G$ be a graph with a triangle formed by the edges $e_1$, $e_2$, $e_3$. By Example \ref{Extriang}, the dual graph polynomial $\ph_G$ takes a form
\begin{equation}\label{d32}
\ph_G=g_0(\alpha_1\alpha_2+\alpha_2\alpha_3+\alpha_1\alpha_3) + (g_2+g_3)\alpha_1+(g_1+g_3)\alpha_2+(g_1+g_2)\alpha_3  + g_{123}.
\end{equation}
together with the connecting identity 
\begin{equation}\label{d33}
g_0g_{123}=g_1g_2+g_2g_3+g_1g_3.
\end{equation}  
\begin{proposition}
In the notation above, one has 
\begin{equation}
[Z_G]=\LL^{N_G-1}-\LL^2[g_0,g_1,g_2,g_3]+\LL^3[g_0,g_1,g_2,g_3,g_{123}].
\end{equation}
\end{proposition}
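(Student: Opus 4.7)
The plan is to stratify $\cV(\ph_G)$ according to whether the polynomial $g_0$ vanishes, and to handle the two strata by different techniques: a change of variables on $\{g_0 \neq 0\}$, where $\ph_G$ is a genuine quadric in the triangle variables $\alpha_1,\alpha_2,\alpha_3$, and a direct computation on $\{g_0 = 0\}$, where $\ph_G$ drops to an affine-linear polynomial in these variables.

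On the open stratum $\{g_0 \neq 0\}$, I would substitute $\beta_i := \alpha_i + g_i/g_0$ for $i = 1,2,3$, an automorphism of the $\AAA^3$-fibre over every point of this stratum. The pairwise-symmetric shape of the quadratic part of (\ref{d32}) ensures that the three shifted linear contributions coming from the quadratic terms exactly cancel the original linear terms $(g_j+g_k)\alpha_i$, while the resulting constant $g_{123} - (g_1g_2+g_2g_3+g_1g_3)/g_0$ vanishes identically by (\ref{d33}). Thus $\ph_G = g_0(\beta_1\beta_2+\beta_2\beta_3+\beta_1\beta_3)$ on this stratum, and two applications of Lemma \ref{lemA17} give the class of this quadric in $\AAA^3$ as $\LL^2$. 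Multiplying by the class $\LL^{N_G-3} - [g_0]$ of the base yields the contribution $\LL^{N_G-1} - \LL^2[g_0]$.

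On the closed stratum $\{g_0 = 0\}$, $\ph_G$ reduces to the affine-linear polynomial $L := (g_2+g_3)\alpha_1 + (g_1+g_3)\alpha_2 + (g_1+g_2)\alpha_3 + g_{123}$. Sub-stratifying by whether at least one of the three coefficients of the $\alpha_i$'s is nonzero, the generic $\AAA^3$-fibre is an affine plane of class $\LL^2$, while on the closed sub-locus where all three coefficients vanish the fibre is either all of $\AAA^3$ (class $\LL^3$) or empty, according to whether $g_{123}$ vanishes. This stratum therefore contributes
\begin{equation*}
\LL^2\bigl([g_0] - [g_0, g_2+g_3, g_1+g_3, g_1+g_2]\bigr) + \LL^3\,[g_0, g_2+g_3, g_1+g_3, g_1+g_2, g_{123}],
\end{equation*}
and summing this with the open-stratum contribution collapses the $\LL^2[g_0]$ terms.

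The last and most delicate step — and the main obstacle — is to identify $\cV(g_0, g_2+g_3, g_1+g_3, g_1+g_2)$ with $\cV(g_0, g_1, g_2, g_3)$ as reduced subschemes (and similarly with $g_{123}$ appended). Away from characteristic $2$, the linear change $(g_1,g_2,g_3) \leftrightarrow (g_2+g_3, g_1+g_3, g_1+g_2)$ has determinant $2$ and is therefore invertible, so the identification is immediate. In characteristic $2$, the vanishing of the three pairwise sums only forces $g_1 = g_2 = g_3 =: t$; but combined with $g_0 = 0$, the polynomial identity (\ref{d33}) gives $0 = g_0 g_{123} = 3t^2 = t^2$, so $t = 0$ on the reduced scheme. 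This is precisely what is needed to reduce the stratum contributions to the claimed form.
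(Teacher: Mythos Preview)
Your argument is correct and is essentially an explicit execution of the $\AAA^2$-fibration argument that the paper merely cites from \cite{BrSch}, Proposition~23: the translation $\beta_i=\alpha_i+g_i/g_0$ on $\{g_0\neq 0\}$ trivializes the family so that each fibre is the affine quadric $\beta_1\beta_2+\beta_2\beta_3+\beta_1\beta_3=0$ of class $\LL^2$, and on $\{g_0=0\}$ the affine-linear reduction gives the remaining $\LL^2$-fibres away from the bad locus. Your handling of the characteristic-$2$ identification $\cV(g_0,g_1+g_2,g_1+g_3,g_2+g_3)=\cV(g_0,g_1,g_2,g_3)$ via (\ref{d33}) is exactly the right point to isolate, and it is treated correctly.
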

\begin{proof}
Since the formulas are identical to the case of the graph hypersurface $\Psi_G$ (but for 3-valent vertex), one can just repeat the proof of Proposition 23 in \cite{BrSch}. The proof is based of a geometrical argument on a related particular $\AAA^2$-fibration.
\end{proof}
\begin{proposition}\label{prop35}
Let $G$ be a graph with a triangle formed by $e_1$,$e_2$,$e_3$ with $h_G\geq 3$, $N_G\geq 4$. Then
\begin{equation}
[Z_G]\equiv [\ph^{13,23},\ph^{1,2}_3]\LL^2 \mod \LL^3.
\end{equation}
As a consequence,
\begin{equation}
[Z_G]_q\equiv q^2[\ph^{1,2}_3,\ph^{13,23}]_q \mod q^3,
\end{equation}
\end{proposition}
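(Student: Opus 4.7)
The plan is to start from the formula of the preceding proposition,
\[
[Z_G]=\LL^{N_G-1}-\LL^2[g_0,g_1,g_2,g_3]+\LL^3[g_0,g_1,g_2,g_3,g_{123}],
\]
and reduce it modulo $\LL^3$. The hypothesis $N_G\geq 4$ forces $\LL^{N_G-1}\in \LL^3 K_0(Var_k)$, while the last term is manifestly in $\LL^3 K_0(Var_k)$. Hence
\[
[Z_G]\equiv -\LL^2[g_0,g_1,g_2,g_3] \mod \LL^3.
\]
Since $[\ph^{13,23},\ph^{1,2}_3]=[g_0,g_3]$ by Example \ref{Extriang}, the proposition is equivalent to showing $-[g_0,g_1,g_2,g_3]\equiv [g_0,g_3] \mod \LL$ in $K_0(Var_k)$.

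\textbf{Scissors decomposition from the triangle Dodgson identity.} For this I would invoke the relation $g_0g_{123}=g_1g_2+g_2g_3+g_1g_3$ coming from (\ref{d33}). On the locus $\cV(g_0,g_3)$ it simplifies to $g_1g_2=0$, and therefore
\[
\cV(g_0,g_3)=\cV(g_0,g_1,g_3)\cup\cV(g_0,g_2,g_3)
\]
with intersection $\cV(g_0,g_1,g_2,g_3)$. Inclusion--exclusion in the Grothendieck ring yields
\[
[g_0,g_3]=[g_0,g_1,g_3]+[g_0,g_2,g_3]-[g_0,g_1,g_2,g_3],
\]
so, after substitution, the target congruence collapses to $[g_0,g_1,g_3]+[g_0,g_2,g_3]\equiv 0 \mod \LL$.

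\textbf{The residual $\LL$-divisibility.} To produce this divisibility I would eliminate one further variable $\alpha_m$ (for any $m\geq 4$) from each of the two triple intersections by means of Lemma \ref{lemA17}, using that $g_0,g_1,g_2,g_3$ are multilinear in $\alpha_m$. The Jacobian-type polynomials $g_0^m g_{i,m}-g_i^m g_{0,m}$ produced by the elimination factor, via the first Dodgson identity (\ref{c33}) applied to the sub-quotient graphs $G/e_3$, $G\setminus e_1$, $G\setminus e_3$ (in which $g_0$, $g_1$, $g_3$ are realized as honest dual Dodgson polynomials), as products of higher dual Dodgson polynomials. Feeding these factorizations into the $\LL$-divisibility statements of Proposition \ref{lemA18} applied inductively to the smaller graphs then delivers the required divisibility of the sum.

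\textbf{Descent to point counts and main difficulty.} The point-counting statement $[Z_G]_q\equiv q^2[\ph^{1,2}_3,\ph^{13,23}]_q \mod q^3$ follows at once from the $K_0$-congruence by applying the ring homomorphism $K_0(Var_k)\to\ZZ$, $[Y]\mapsto\#\bar Y(\FF_q)$, $\LL\mapsto q$, $\1\mapsto 1$. The genuinely delicate step is the $\LL$-divisibility of the previous paragraph: the bare Dodgson identity (\ref{d33}) only trims the defining equations by one, and closing the remaining gap forces a second round of variable-elimination combined with higher Dodgson identities for the reduced polynomials, in the spirit of the analogous analysis for the graph polynomial $\Psi_G$ in \cite{BrSch}.
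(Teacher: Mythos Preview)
Your overall strategy is correct and is exactly the route the paper (via its references to \cite{BrSch} and \cite{D4}) takes: reduce the previous proposition modulo $\LL^3$, identify $[\ph^{13,23},\ph^{1,2}_3]$ with $[g_0,g_3]$, and then close the gap $-[g_0,g_1,g_2,g_3]\equiv[g_0,g_3]\bmod\LL$ by the triangle identity.

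The only place where your write-up is weaker than it needs to be is the ``residual $\LL$-divisibility'' step. You propose to eliminate a further variable $\alpha_m$ and chase Dodgson factorizations of the resulting resultants; this is vague (the relevant resultants of $g_0$ with $g_i$ do not sit in the shape required by (\ref{c33}) in any obvious way), and more to the point it is unnecessary. The clean argument, which the paper itself uses later in the 4-face proof at (\ref{b32}), is this: on $\cV(g_0)$ the identity $g_0g_{123}=g_1g_2+g_2g_3+g_1g_3$ rewrites as $g_j(g_i+g_k)+g_ig_k=0$, so on $\cV(g_0,g_i+g_k)$ one has $g_ig_k=0$ together with $g_i=-g_k$, hence $g_i=g_k=0$. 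Thus
\[
[g_0,g_1,g_3]=[g_0,g_1+g_3]=[g_0,\ph^2_{13}]=[\ph^{12}_3,\ph^2_{13}]=[\ph^1_{G'},\ph_{G',1}]
\]
for $G'=G\backslash 3\q 2$, and likewise $[g_0,g_2,g_3]=[\ph^2_{G''},\ph_{G'',2}]$ for $G''=G\backslash 3\q 1$. Since $h_{G'}=h_{G''}=h_G-1\geq 2$, Proposition~\ref{lemA18}(2) gives $\LL\mid[g_0,g_1,g_3]$ and $\LL\mid[g_0,g_2,g_3]$ \emph{separately}, with no further elimination required. Your inclusion--exclusion then finishes the proof exactly as you wrote.
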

\begin{proof}
The proof is analogues to Lemma 24 in \cite{BrSch}, and identical to the part of the proof of Proposition 19 in \cite{D4}.
\end{proof}
\begin{remark}
After $c_2$ invariant $c_2^{dual}(G)$ is defined, Proposition \ref{prop35} gives a starting point for the \emph{denominator reduction} game similar to that one for computation of $c_2(G)$ in $\phi^4$ theory, see, for example, \cite{BrSch}. The set of graphs, for which this process will be applicable and give a concrete answer $c_2(G)=\pm 1$ or $0$, \emph{dually denominator reducible graphs}, need not to coincide with the set of denominator reducible graphs.
\end{remark}

Computing the number of rational points, we are also going to use the following vanishing statement called the \emph{Chevalley-Warning theorem}. This vanishing helps to get rid of many summands in the formulas coming later. 
\begin{theorem}\label{CW_thm}
Let $f_1,\ldots, f_k\subset \ZZ[x_1,\ldots,x_n]$ be polynomials and assume that the degrees $d_i:=\deg f_i$ satisfy $\sum_1^k d_i < n$. Then, for the number of $\FF_q$-rational points of the variety given by the intersection of the hyperplanes $\cV(f_i)$ in $\AAA^n$, the following congruence holds
\begin{equation}
  [f_1,\ldots,f_k]_q\equiv 0 \mod q.
\end{equation}  
\end{theorem}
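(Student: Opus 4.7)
The plan is to use the classical indicator-polynomial trick that is the heart of the Chevalley--Warning theorem. Let $p$ denote the characteristic of $\FF_q$ and set
\[
P(x) := \prod_{i=1}^{k}\bigl(1 - f_i(x)^{q-1}\bigr) \in \FF_q[x_1,\ldots,x_n].
\]
By Fermat's little theorem, $t^{q-1}$ equals $1$ on $\FF_q^{\times}$ and $0$ at $t=0$, so for $x \in \FF_q^n$ the value $P(x)$ is $1$ precisely when $f_1(x)=\cdots=f_k(x)=0$ and $0$ otherwise. The first move is therefore to rewrite
\[
[f_1,\ldots,f_k]_q \equiv \sum_{x \in \FF_q^n} P(x) \pmod{p},
\]
turning a counting problem into a single polynomial sum over $\FF_q^n$.

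The second step is the degree count: by hypothesis $\deg P \leq (q-1)\sum_i d_i < n(q-1)$. The third step is the standard vanishing lemma: any $g \in \FF_q[x_1,\ldots,x_n]$ with $\deg g < n(q-1)$ satisfies $\sum_{x \in \FF_q^n} g(x) = 0$ in $\FF_q$. By linearity this reduces to a single monomial $x_1^{a_1}\cdots x_n^{a_n}$; since $\sum a_j < n(q-1)$, at least one $a_j$ is strictly less than $q-1$, and then the factor $\sum_{t \in \FF_q} t^{a_j}$ vanishes (using the classical identity $\sum_{t \in \FF_q} t^a = -1$ when $(q-1)\mid a$ and $a>0$, and $0$ otherwise, together with multiplicativity of the sum over a product monomial). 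Applied to $P$, this gives $\sum_x P(x) = 0$ in $\FF_q$, i.e.\ $p \mid [f_1,\ldots,f_k]_q$.

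The real obstacle, and the point I would flag explicitly, is the gap between divisibility by the characteristic $p$ and divisibility by the full prime power $q = p^r$ as literally stated. For $q$ prime there is nothing more to do, and this is the situation relevant for the $q^2$-coefficient extraction used elsewhere in the paper. For $q=p^r$ in general, the stronger conclusion is Ax's 1964 refinement of Chevalley--Warning, which can either be invoked as a black box or derived by a Stickelberger/Gauss-sum analysis of $\prod_i(1-f_i^{q-1})$, or by iterating the elementary argument above through a Weil restriction. In keeping with the style of Section 3, I would simply state the mod-$p$ computation in detail and cite Ax for the upgrade from $p$ to $q$.
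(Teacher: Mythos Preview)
Your proposal is correct. The paper's own proof is much terser: it simply cites the classical Chevalley--Warning statement for $k=1$ and $q=p$, cites Katz \cite{Ka} for the extension to arbitrary prime powers $q$, and then asserts that the general $k$ case ``easily follows by induction on $k$'' without spelling out the induction.

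Your route differs in two respects. First, you handle all $k$ simultaneously via the product indicator $\prod_{i}(1-f_i^{q-1})$, which is cleaner than an unspecified induction on $k$; in fact the product indicator is precisely what makes such an induction unnecessary. Second, you explicitly separate the elementary mod-$p$ argument (which you carry out in full) from the deeper upgrade to mod $q$, correctly flagging the latter as Ax's refinement and proposing to cite it. The paper makes the same move---its citation of Katz is exactly the Ax upgrade---but does not draw attention to the distinction. So the mathematical content is essentially the same; your version is more explicit about where the elementary argument ends and where the black-box input begins, which is a virtue.
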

\begin{proof}
The classical Chevalley-Warning statement was for $k=1$ and $q=p$. This was generalized to arbitrary prime power $q=p^m$ by Katz in \cite{Ka}. The general case easily follows by induction on $k$. \label{add_ref_c2invarinvar}
\end{proof}

The relevant to Feynman graphs case is the case of a log-divergent graph:  $N_G=2n_G$. For the further statements on log-divergent graphs, we need to understand that the situation $N_G>2n_G$ is "degenerate" for the point-counting function for $\ph_G$, more precisely, for the $c_2$ invariant. 

\begin{proposition}\label{lem19}
Let $G$ be a graph with $N_G>2n_G$. Assume $G$ has a triangle (resp. $G$ has a double edge or a self-loop and $n_G\geq 1$). Then the following congruences hold
\begin{align}
[Z_G]_g\equiv 0 \mod q^3,\label{al1}\\
[\ph^1,\ph_1]_q\equiv 0 \mod q^2\label{al2},
\end{align}
where $e_1$ is in the triangle (resp. double edge or self-loop).
\end{proposition}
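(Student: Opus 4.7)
I would handle the three configurations listed in the statement---self-loop, double edge, triangle---separately, reducing each to Proposition \ref{lemA18}, Proposition \ref{prop35}, and Chevalley-Warning (Theorem \ref{CW_thm}).

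For the \textbf{self-loop} case, observe that a self-loop $e_1$ cannot belong to any spanning tree, so $\ph_G$ is independent of $\alpha_1$. This immediately gives $[Z_G] = \LL \cdot [Z_{G \backslash 1}]$ in $K_0(Var_k)$, and since $\ph^1 = 0$ (contracting a self-loop kills $\ph$), we also get $[\ph^1, \ph_1] = \LL \cdot [\ph_{G \backslash 1}]$. Applying Proposition \ref{lemA18}(1) to $G \backslash 1$ yields $\LL^2 \mid [Z_{G \backslash 1}]$, giving both required divisibilities. For the \textbf{double edge} case, if $e_1$ and $e_2$ are parallel, contracting $e_1$ turns $e_2$ into a self-loop of $G/1$, so $\ph^{12} = 0$ and $\ph^1 = \ph_{G/1}$ is independent of $\alpha_2$; combined with Lemma \ref{lemA17}(2) to eliminate $\alpha_2$, these identifications produce the extra factor of $\LL$ beyond what Proposition \ref{lemA18} already provides.

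The \textbf{triangle} case is the principal step. Applying Proposition \ref{prop35}, one reduces to showing $q \mid [\ph^{1,2}_3, \ph^{13,23}]_q$, since $[Z_G]_q \equiv q^2 [\ph^{1,2}_3, \ph^{13,23}]_q \pmod{q^3}$. Expanding both polynomials linearly in an auxiliary variable $\alpha_4$, write $\ph^{1,2}_3 = A\alpha_4 + B$ and $\ph^{13,23} = C\alpha_4 + D$ with $A = \ph^{14,24}_3$, $B = \ph^{1,2}_{34}$, $C = \ph^{134,234}$, $D = \ph^{13,23}_4$. Lemma \ref{lemA17}(2) then yields
\[
[\ph^{1,2}_3, \ph^{13,23}]_q = q \cdot [A, B, C, D]_q + [AD - BC]_q - [A, C]_q.
\]
The Dodgson condensation (Jacobi identity \ref{c27}) applied to the $2 \times 2$ adjoint minor of $L_G(\{1\}, \{2\})$ at rows and columns $\{3, 4\}$ gives the key identity $AD - BC = \ph^{13,24}_3 \cdot \ph^{14,23}_3$. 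Chevalley-Warning applied to the single polynomials $\ph^{13,24}_3$ and $\ph^{14,23}_3$ (each of degree $N_G - n_G - 2$ in $N_G - 4$ variables, where the CW condition $\deg < \#\mathrm{vars}$ reduces to $n_G > 2$) controls the leading contributions of $[AD - BC]_q$. The residual intersection terms $[\ph^{13,24}_3, \ph^{14,23}_3]_q$ and $[A, C]_q$, for which direct Chevalley-Warning fails in the degenerate regime $N_G > 2n_G$, are handled by iterating the same expand-and-condense procedure in further auxiliary variables $\alpha_5, \alpha_6, \ldots$; the hypothesis $N_G > 2 n_G$ (providing a slack of at least one in the degree count) propagates through each iteration so that Chevalley-Warning applies at the final stage.

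\textbf{Main obstacle.} The triangle case is the technical heart. The principal difficulties are tracking signs through successive Dodgson condensations, managing the combinatorial explosion of sub-quotient Dodgson polynomials produced at each iteration, and verifying at each stage that the slack $N_G - 2n_G \geq 1$ survives the reductions and ensures the degree-count condition $\sum \deg f_i < \#\mathrm{vars}$ for the relevant Chevalley-Warning applications.
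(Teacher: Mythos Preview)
Your triangle case takes a detour that is both unnecessary and based on a degree error. Recall that $\deg \ph_G = n_G$ (the number of edges in a spanning tree), so $\deg \ph^{I,J}_{G,K} = n_G - |I|$. Hence $\deg \ph^{1,2}_3 = n_G - 1$ and $\deg \ph^{13,23} = n_G - 2$, and both polynomials live in $N_G - 3$ variables. The Chevalley--Warning condition for the pair is
\[
(n_G - 1) + (n_G - 2) = 2n_G - 3 < N_G - 3 \iff N_G > 2n_G,
\]
which is exactly the hypothesis. So after invoking Proposition~\ref{prop35} the congruence $q \mid [\ph^{1,2}_3,\ph^{13,23}]_q$ is \emph{immediate} from Theorem~\ref{CW_thm}; this is precisely what the paper does, in one line.

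Your expansion in $\alpha_4$, the Dodgson identity $AD - BC = \pm \ph^{13,24}\ph^{14,23}$, and the proposed iteration are all superfluous. Worse, the degree you quote, ``$N_G - n_G - 2$'', equals $h_G - 2$ and is the degree of the corresponding Dodgson polynomial for $\Psi_G$, not for $\ph_G$. With that wrong degree, Chevalley--Warning indeed appears to fail (since $N_G > 2n_G$ forces $N_G < 2h_G$), which is why you are driven to an iterative scheme. But that scheme is never made precise, and with the incorrect degrees the ``slack'' does not improve under further reductions, so the iteration would not terminate. With the correct degrees, every term you write down --- including $[\ph^{13,24},\ph^{14,23}]_q$ and $[A,C]_q$ --- already satisfies the Chevalley--Warning bound directly, so no iteration is needed.

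Finally, in the triangle case you only treat (\ref{al1}); the paper then derives (\ref{al2}) from (\ref{al1}) via the one-variable elimination
\[
[Z_G]_q = q[\ph^1,\ph_1]_q + q^{N_G-1} - [\ph^1]_q,
\]
noting that $G\q 1$ has a double edge so $q^3 \mid [\ph^1]_q$ by the already-handled case. You should add this step.
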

\begin{proof}
The cases of a double edge and a self-edge are trivial.  Now, let $e_1$, $e_2$ and $e_3$ be the edges forming a triangle in $G$. By Proposition \ref{prop35}, $[Z_G]_q\equiv q^2[\ph^{1,2}_3,\ph^{13,23}]_q \mod q^3$. Now we are going to use Chevalley-Warning theorem. For this, we have to understand the degrees of the appearing polynomials. The degree of the dual graph polynomial is equal to number of vertices minus 1,  $\deg\ph_G=n_G$. By the first Dodgson identity, $\deg\ph^{i,j}=\deg\ph^i$. One computes $\deg \ph^{1,2}_3 = n_G-1$ and $\deg \ph^{13,23}= n_G-2$, both polynomials depend on $N_G-3$ variables. Since $N_G> 2n_G$, we may apply Chevalley-Warning theorem to $\cV(\ph^{1,2}_3,\ph^{13,23})$ and get 
\begin{equation}
[\ph^{1,2}_3,\ph^{13,23}]_q\equiv 0 \mod q.
\end{equation} 
The first statement follows.

For the second congruence, consider again the elimination of $\alpha_1$ by Lemma \ref{lemA17}:
\begin{equation}\label{d39} 
[Z_G]_q=[\ph^1\alpha_1+\ph_1]_q=q[\ph^1,\ph_1]_q+q^{N_G-1}-[\ph^1]_q.
\end{equation} 
Since $\ph^1_G=\ph_{G'}$ for $G'=G\q 1$ and $N_{G'}>2n_{G'}>1$ (or $n_{G'}=1$ and the situation is trivial). By the first statement, $[Z_G]_q\equiv [\ph^1]_q\equiv 0 \mod q^3$. Now $(\ref{d39})$ implies $q^2| [\ph^1,\ph_1]_q$.
\end{proof}

In \cite{D4}, it was proved that the $c_2$ invariant respects dualization (the coefficients of $q^2$ for $[Z_G]$ and for $\Psi_G$ coincide) for any log-divergent graph $G$ with $h_G\geq 3$ under the assumption that $G$ is duality admissible. 
\begin{definition}\label{def36}
A log-divergent graph $G$ with $n_G\geq 3$ (and $N=N_G=2n_G$ edges) is called $\underline{duality\; admissible}$ if 
\begin{equation}\label{d41}
[\ph^J_{I}]_q\equiv 0 \mod q^3
\end{equation}
for any $I,J\subset E(G)$ with $|J|>|I|\geq 0$, $|I|\leq n_G-3$.
\end{definition}

In the proof of the main result in \cite{D4}, the situation is symmetric under the interchange $\Psi_G\leftrightarrow \ph_G$. The vanishing corresponding to (\ref{d41}) for $\Psi_G$ is served by the statement similar to Proposition \ref{lem19} for a graph $G$ with $N_G>2h_G$ since such a graph always has vertex of valency at most $3$. The situation for $\ph_G$ is surprisingly more complicated since the girth of a (even log-divergent) graph is unbounded.

By the Proposition \ref{lem19} above , we know the divisibility of the point-counting functions for the sub-quotient graphs $[\ph^J_{I}]|q^3$ in the definition above as long as we have a cycle of length at most 3. In the next section we prove that the congruence (\ref{d41}) also holds in the case when we do not have a triangle, but have a 4-face.

\section{A 4-face formula}
In the previous section we have discussed several computational facts about the graphs with a cycles of length $\leq 3$. There are also graphs with $girth 4$, that is, all their cycles are of length $\geq 4$. Some of these graphs are relevant to this Feynman integrals subject, for example, known to give a counter-examples to the Kontsevich conjecture on the polynomiality of the point-counting function $[\Psi_G]_q$, see \cite{D2} or \cite{Sch}. On the other hand, we may meet such graphs when we are going to check the vanishing conditions (\ref{d41}) for all subgraphs while proving the duality admissibility for certain $G$.

In this section we try to study a graph $G$ with a 4-face in the similar way and with similar techniques as for the triangle case before.
\medskip

Consider a graph $G$ with a 4-face formed by the edges $e_1,\ldots,e_4$, with $e_1$ and $e_3$ opposite. What one can try do immediately is to start to reduce the first 2 variables by Lemma \ref{lemA17} and get 
\begin{equation}\label{d23}
[Z_G]=\LL^{N-2}-[\ph^1]+ [\ph^{12},\ph^1_2,\ph^2_1,\ph_{12}]\LL^2+ [\ph^{1,2}]\LL-[\ph^{12},\ph^2_1]\LL.
\end{equation}
The formula works for all graphs and the most complicated pies in the sum on the right is $\cV(\ph^{12},\ph^1_2,\ph^2_1,\ph_{12})$, the intersection of 4 hypersurfaces. This is also the obstruction for reducing the third variable in general. In the case $G$ having a triangle formed by the edges $e_1$,$e_2$ and $e_3$, one has a precise formula for $\ph_G$, see (\ref{c51}) in Example \ref{Extriang}. Using this, one derives $[Z_G]\equiv [\ph^{1,2},\ph^{13,23}]\mod \LL^3$, see Proposition \ref{prop35}. For the 4-face situation, we do not have the concrete formula for $\ph_G$ and, a priory, no such congruence. Nevertheless, we try to do our best, to understand the structure of $\ph_G$ and to prove some vanishing results similar to Proposition \ref{lem19}.
\medskip

We chose the orientation of the 4-face of $G$ and orient the edges $e_1,\ldots,e_4$ in the corresponding direction. Now we orient the other edges of $G$ and build the matrix $L_G$ to fix the signs of the Dodgson polynomials. The contraction of the edges $e_1,\ldots,e_4$ leads to the contraction of a self-loop, hence $\ph^{1234}=0$. We also know that $\ph^{123}_4=\ph^{ijk}_t$ for $\{i,j,k,t\}=\{1,2,3,4\}$.
Similarly to Example \ref{Extriang}, the Jacobi identity (\ref{c27}) implies the vanishing of the corresponding $4\times 4$ matrix. The first row implies
\begin{equation}
\ph^{1,1}=\ph^{1,2}-\ph^{1,3}+\ph^{1,4}.
\end{equation}
Expanding these polynomials in $\alpha_2$, $\alpha_3$ and $\alpha_4$, one gets
\begin{multline}
\Psi^{123}_4\alpha_2\alpha_3+ \ph^{124}_3\alpha_2\alpha_4+ \ph^{134}_2\alpha_3\alpha_4+ \ph^{12}_{34}\alpha_2+ \ph^{13}_{24}\alpha_3+ \ph^{14}_{23}\alpha_4+
\ph^{1}_{234}=\\ (\ph^{134,234}\alpha_3\alpha_4+ \ph^{13,23}_{4}\alpha_3+ \ph^{14,24}_{3}\alpha_4+\ph^{1,2}_{34}) - (-\ph^{124,234}\alpha_2\alpha_4-\ph^{12,23}_{4}\alpha_2+\\ \ph^{14,34}_{2}\alpha_4+\ph^{1,3}_{24})+ (\ph^{123,234}\alpha_2\alpha_3- \ph^{12,24}_{3}\alpha_2- \ph^{13,34}_{2}\alpha_3+\ph^{1,4}_{23})
\end{multline}
We derive $\ph^{123}_4=\ph^{123,234}$, $\ph^{12}_{34}=\ph^{12,23}_4-\ph^{12,24}_3$, $\ph^{1}_{234}=\ph^{1,2}_{34}-\ph^{1,3}_{24}+\ph^{1,4}_{23}$.
For unifying the notation, define 
\begin{equation}
a:=\ph^{ijk}_t ,\quad c^{i,j}:=(-1)^{i-j-1}\ph^{i,j},\quad b^i_j:=(-1)^{r_b}\ph^{ki,it}_j,
\end{equation}
where $r_b=(k-t)$ if $(k-i)(t-i)>0$, and $r_b=(k-t-1)$ otherwise.
Analysing similarly the other rows of the matrix, we finally obtain
\begin{equation}\label{b14}
\begin{aligned}
\ph^{ijk,ijt}&=a=\ph^{ijk}_t,\\ 
\ph^{ij}_{kt}&=b^i_k+b^i_t,\\
\ph^i_{jkt}&=c^{i,j}+c^{i,k}+c^{i,t},
\end{aligned}
\end{equation}
for all $\{i,j,k,t\}=\{1,2,3,4\}$. The polynomials are also related by Dodgson identities.
Applying the formula before (\ref{c53}) to the case $G'=G\backslash t$, we get in $\ZZ[\alpha]$
\begin{equation}\label{b15}
(b^i_t)^2\equiv \ph^{ij}_{kt}\ph^{ik}_{jt} \mod a.
\end{equation}
Now we return to formula (\ref{d23}). 

\bigskip 
To get (partial) control on the class of $\cV(\ph^{12},\ph^1_2,\ph^2_1,\ph_{12})$, we are going to stratify this intersection further by reducing with respect to the next 2 variables using Dodgson identities and the identities from (\ref{b14}).
\begin{theorem}\label{prop51}
Let $G$ be a graph with a 4-face bounded by the edges $e_1,\ldots,e_4$, where $e_1$ and $e_3$ be opposite edges. Then 
\begin{multline}\label{b28}
[\ph^{12},\ph^1_2,\ph^2_1,\ph_{12}]\equiv [\ph^{12,34}]-[a,\ph^{12,34}]+\\  [a,b^1_3]-[a,b^1_4] + [a,\ph^{12}_{34}\ph^{34}_{12}] \mod \LL.
\end{multline}
\end{theorem}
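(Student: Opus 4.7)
My plan is to eliminate the variables $\alpha_3$ and $\alpha_4$ from the four-polynomial system, exploiting the explicit expansions (\ref{b14}) available because $e_1,\ldots,e_4$ form a 4-face. By the identities (\ref{b14}), each of the polynomials $\ph^{12}, \ph^1_2, \ph^2_1, \ph_{12}$ expands as a polynomial in $\alpha_3,\alpha_4$ with coefficients drawn from the ``building blocks'' $a$, the $b^i_j$, the $c^{i,j}$, $\ph^{12}_{34}$, $\ph^{34}_{12}$, and $\ph_{1234}$. In particular, $\ph^{12}=a(\alpha_3+\alpha_4)+\ph^{12}_{34}$ is linear in each of $\alpha_3,\alpha_4$ separately, since the $\alpha_3\alpha_4$-coefficient $\ph^{1234}$ vanishes (contracting a cycle), while $\ph^1_2$ and $\ph^2_1$ both carry leading quadratic coefficient $a$ and $\ph_{12}$ carries leading quadratic coefficient $\ph^{34}_{12}$.

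The first step is to apply Lemma \ref{lemA17} to eliminate $\alpha_3$ using $\ph^{12}$, which is linear in $\alpha_3$ with leading coefficient $a$. Working modulo $\LL$ kills the $\LL$-terms in the lemma, leaving a scissor decomposition into the generic piece over $\{a\neq 0\}$, on which $\alpha_3$ is parametrized as $\alpha_3=-\alpha_4-\ph^{12}_{34}/a$, and the singular stratum $[a,\ph^1_2,\ph^2_1,\ph_{12}]$. Substituting the parametrization into the other three polynomials and clearing the denominator $a$ reduces each to an explicit quadratic in $\alpha_4$ (with leading coefficient $-a^2$ for $\ph^1_2$ and $\ph^2_1$, and $-a\,\ph^{34}_{12}$ for $\ph_{12}$). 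I would then apply Lemma \ref{lemA17} a second time to eliminate $\alpha_4$ from the resulting quadratic system, again stratifying by the vanishing of the new leading coefficients and dropping $\LL$-contributions.

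The recognition of the resulting pieces as $[\ph^{12,34}]$, $[a,\ph^{12,34}]$, $[a,b^1_3]$, $[a,b^1_4]$, and $[a,\ph^{12}_{34}\ph^{34}_{12}]$ rests on two kinds of identification. First, the first Dodgson identity (\ref{c33}) applied with $I=J=\emptyset$ to the indices $1,2,3,4$ rewrites the cross-term resultant $\ph^{1,3}\ph^{2,4}-\ph^{1,4}\ph^{2,3}$ produced by the elimination as $\pm \ph\cdot\ph^{12,34}$, producing the pair $[\ph^{12,34}]-[a,\ph^{12,34}]$ over the $\{a\neq 0\}$ stratum. Second, the 4-face relation (\ref{b15}), $(b^1_t)^2\equiv\ph^{12}_{34}\ph^{13}_{24}\pmod{a}$, controls the substratum $\{a=0\}$: after reducing modulo $a$, the coefficient differences such as $A_4-A_3=b^1_3-b^1_4$ that arise from the substitution combine with (\ref{b15}) to produce the three remaining contributions, including the mixed term $[a,\ph^{12}_{34}\ph^{34}_{12}]$ (which has no analogue in the triangle case of Proposition \ref{prop35}).

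The main obstacle will be the careful sign and coefficient bookkeeping: the substitution $\alpha_3=-\alpha_4-\ph^{12}_{34}/a$ introduces many cross-terms involving linear combinations of $b^1_2,b^1_3,b^1_4$ and the $c^{1,j}$, and identifying the final intersections requires repeated invocation of the Dodgson identities together with (\ref{b15}) modulo $(a)$. The delicate point is verifying that, after dropping $\LL$-contributions and collapsing modulo $(a)$ in the singular stratum, the five named pieces on the right-hand side appear with the stated signs and no extra residual classes remain.
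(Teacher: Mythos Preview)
Your overall strategy---eliminate $\alpha_3$ and $\alpha_4$ and recognise the pieces via Dodgson-type identities---is the same as the paper's, but the execution you sketch has a real gap. After substituting $\alpha_3=-\alpha_4-\ph^{12}_{34}/a$ on the open stratum $\{a\neq0\}$, the three remaining polynomials $\ph^1_2,\ph^2_1,\ph_{12}$ become \emph{quadratic} in $\alpha_4$ (with leading coefficients $-a,-a,-\ph^{34}_{12}$ before clearing denominators), so Lemma~\ref{lemA17}, which requires linearity in the variable to be eliminated, cannot be applied a second time as you propose. The identity you invoke, $\ph^{1,3}\ph^{2,4}-\ph^{1,4}\ph^{2,3}=\pm\ph\cdot\ph^{12,34}$, is also not quite what (\ref{c33}) gives with $I=J=\emptyset$: the subscript $S=\{1,2,3,4\}$ forces all four variables to zero, so the left side is $\ph^{1,3}_{24}\ph^{2,4}_{13}-\ph^{1,4}_{23}\ph^{2,3}_{14}$ and the right side is $\pm\ph_{1234}\,\ph^{12,34}$; in any case it is not clear how that particular combination would emerge from your substitution-and-resultant scheme.

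The paper sidesteps the quadratic obstruction by eliminating $\alpha_3$ not via substitution but via the general resultant formula (\ref{b29}), which decomposes $[\ph^{12},\ph^1_2,\ph^2_1,\ph_{12}]$ modulo $\LL$ into three pieces $S_1,S_2,S_3$ built from the pairwise resultants $[f_i,f_j]_3$. The crucial point is that four of these six resultants are, by the first Dodgson identity, \emph{perfect squares of polynomials linear in $\alpha_4$} (see (\ref{d37})), e.g.\ $[f_a,f_b]_3=(\ph^{12,13})^2=(a\alpha_4+b^1_4)^2$; this linearity is what allows the second elimination of $\alpha_4$ to go through. The appearance of $\ph^{12,34}$ then comes not from the Dodgson identity you cite but from the Pl\"ucker relation (\ref{b39}), $\ph^{12,34}=b^2_4-b^1_4$, together with the non-obvious identity (\ref{b36}) obtained by expanding a Dodgson identity in $\alpha_2$. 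The remaining terms $[a,b^1_3],[a,b^1_4],[a,\ph^{12}_{34}\ph^{34}_{12}]$ come from a rather delicate analysis of $S_2+S_3$ on the locus $\{a=0\}$ using (\ref{b15}) repeatedly, much as you anticipate, but the route to isolating those pieces depends on having the linear resultants from $S_1$ already in hand.
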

\begin{proof}
Recall the formula for eliminating of one variable $\alpha=\alpha_1$ from the set of polynomials $f_1,\ldots,f_k\in \ZZ[\alpha_1,\ldots,\alpha_n]$ linear in this variable, $f_i=f_i^1\alpha+f_{i,1}$ :
\begin{multline}\label{b29}
[f_1,\ldots,f_n]=[f^\alpha_1,f_{1,\alpha},\ldots,f^\alpha_n,f_{n,\alpha}]\LL +\\ [[f_1,f_2]_\alpha,\ldots,[f_1,f_n]_\alpha] - [f^\alpha_1,\ldots,f^\alpha_n]\\
\sum^{n-2}_{k=1}([f^\alpha_1,f_{1,\alpha}\ldots,f^\alpha_k,f_{k,\alpha},[f_{k+1},f_{k+2}]_{\alpha},\ldots,[f_{k+1},f_n]_{\alpha}]\\
-[f^\alpha_1,f_{1,\alpha}\ldots,f^\alpha_k,f_{k,\alpha}]).
\end{multline}
see \cite{BSY}, Proposition 29.\\ 
Here and later, for two polynomials $f$ and $g$ linear of  $\alpha_i$, we denote by $[f,g]_{\alpha_i}=[f,g]_i$ the resultant with respect to $\alpha_i$:
\begin{equation}
[f,g]_i:=\pm (f^ig_i-f_ig^i).
\end{equation}
We apply formula (\ref{b29}) to the polynomials 
\begin{equation}
f_a=\ph^{12},\; f_b=\ph^1_2,\; f_c=\ph^2_1\; f_d=\ph_{12}
\end{equation}
for the variable $\alpha=\alpha_3$. Then we get 
\begin{multline}\label{b30}
[\ph^{12},\ph^1_2,\ph^2_1,\ph_{12}]=
[f_a,f_b,f_c,f_d]=[f^3_a,f_{a3},f^3_b,f_{b3},f^3_c,f_{c3},f^3_d,f_{d3}]\LL\\ + \big(S_1+S_2+S_3\big) - \big([f^3_a,f^3_b,f^3_c,f^3_d]+[f^3_a,f_{a3}]+[f^3_a,f_{a3},f^3_b,f_{b3}]\big),
\end{multline}
where
\begin{equation}\label{b31}
\begin{aligned}
S_1=\big[[f_a,f_b]_3,[f_a,f_c]_3,[f_a,f_d]_3\big],\\
S_2=\big[f^3_a,f_{a3},[f_b,f_c]_3,[f_b,f_d]_3\big],\\
S_3=\big[f^3_a,f_{a3},f^3_b,f_{b3},[f_c,f_d]_3\big].
\end{aligned}
\end{equation}
Each of the three summands in the last brackets of (\ref{b30}) is divisible by $\LL$. Indeed, the variety $\cV(f^3_a,f^3_b,f^3_c,f^3_d)\subset\AAA^{N-2}$ is the cone over the variety defined by the same equations but in $\AAA^{N-3} (\text{no}\; \alpha_3)$, thus $\LL|[f^3_a,f^3_b,f^3_c,f^3_d]$. Now $[f_a^3,f_{a3}]=[\ph^{123},\ph^{12}_3]=[\ph^3_{G'},\ph_{G',3}]$ for $G'=G\q 12$, so $\LL |[f_a^3,f_{a3}]$ by Proposition \ref{lemA18}. For the last summand $[f^3_a,f_{a3},f^3_b,f_{b3}]=[\ph^{123},\ph^{12}_3,\ph^{13}_2,\ph^1_{23}]$ we are going to use the triangle formula from Example \ref{Extriang} for the graph $G':=G\q 1$ with edges $e_2,e_3,e_4$ forming a triangle. In the notation with $g_i$ but with indices $i=2,3,4$, we have 
\begin{multline}\label{d35}
[\ph^{23}_{G'},\ph^{2}_{G',3},\ph^{3}_{G',2},\ph_{G',23}]=[g_0,g_0\alpha_3+(g_3+g_4),g_0\alpha_3+(g_2+g_3),\\ (g_2+g_4)\alpha_3+g_{234}]=[g_0,g_3+g_4,g_2+g_3,(g_2+g_4)\alpha_3+g_{234}].
\end{multline}
The connecting identity (\ref{c53}) takes the form $g_0g_{234}=g_2(g_3+g_4)+g_3g_4$, thus the vanishing of $g_3+g_4$ on $\cV(g_0)$ implies the vanishing of both summands $g_3$ and $g_4$. Analogously, 
\begin{equation}\label{b32}
[g_0,g_2+g_3]=[g_0,g_2,g_3].
\end{equation}
It follows now that all the terms in the brackets 
(\ref{d35}) become independent of $\alpha_3$. As a consequence, it gives us a cone over a variety in $\AAA^{N-3}$, thus the class is divisible by $\LL$.

Finally, we derive the following congruence from (\ref{b30}):
\begin{equation}\label{d36}
[\ph^{12},\ph^1_2,\ph^2_1,\ph_{12}]\equiv\big(S_1+S_2+S_3\big) \mod \LL
\end{equation} 
with $S_i$ given by (\ref{b31}). Now we will work with these 3 summands separately and then will show that they sum up to $0 \mod \LL$. For simplicity, we list here the involved polynomials:
\begin{equation}\label{d37}
\begin{aligned}
\mathstrut[f_a,f_b]_3&=\ph^{123}\ph^1_{23}-\ph^{12}_3\ph^{13}_2=(\ph^{12,13})^2=(a\alpha+b^1_4)^2,\\
[f_a,f_c]_3&=\ph^{123}\ph^2_{13}-\ph^{12}_3\ph^{23}_1=(\ph^{12,23})^2=(a\alpha+b^2_4)^2,\\
[f_c,f_d]_3&=\ph^{23}_1\ph_{123}-\ph^2_{13}\ph^3_{12}=(\ph^{2,3}_1)^2,\\
[f_b,f_d]_3&=\ph^{13}_2\ph_{123}-\ph^3_{12}\ph^1_{23}=(\ph^{1,3}_2)^2,\\
[f_b,f_c]_3&=\ph^{13}_2\ph^2_{13}-\ph^{23}_1\ph^1_{23},\\
[f_a,f_d]_3&=\ph^{123}\ph_{123}-\ph^{12}_3\ph^3_{12}.
\end{aligned}
\end{equation}

The coefficient of $\alpha_2$ in the expansion of the first Dodgson identity $\ph^1_3\ph_1^3-\ph^{13}\ph_{13}=(\ph^{1,3})^2$ in $\alpha_2$ gives 
\begin{equation}\label{b34}
\ph^{12}_3\ph^3_{12}+\ph^1_{23}\ph^{23}_1- \ph^{123}\ph_{123}-\ph^{13}_2\ph^2_{13} = -2\ph^{12,23}\ph^{1,3}_2.
\end{equation} 
Similarly, for the expansion in $\alpha_1$ of the Dodgson identity for the pair of edges $e_2$ and $e_3$ implies
\begin{equation}\label{b35}
\ph^{12}_3\ph^3_{12}+\ph^2_{13}\ph^{13}_2- \ph^{123}\ph_{123}-\ph^{23}_1\ph^1_{23} = 2\ph^{12,13}\ph^{2,3}_1.
\end{equation} 
The sum of the two equalities above reads 
\begin{equation}\label{b36}
\ph^{12}_3\ph^3_{12}-\ph^{123}\ph_{123}= \ph^{12,13}\ph^{2,3}_1-\ph^{12,23}\ph^{1,3}_2. 
\end{equation} 
It follows that $[f_a,f_d]_3\in \ZZ[\alpha]$ is in the ideal generated by $\ph^{12,13}$ and $\ph^{12,23}$. Thus, using (\ref{d37}), one computes
\begin{multline}\label{b37}
S_1=[[f_a,f_b]_3,[f_a,f_c]_3,[f_a,f_d]_3]=[\ph^{12,13},\ph^{12,23}]=\\ [a\alpha_4+b^1_4,a\alpha_4+b^2_4]=[a\alpha_4+b^1_4,b^2_4-b^1_4].
\end{multline}
Similar to Lemma \ref{lem7}, by use of the classical Pl\"uker identity, we can derive the following identity on the minors of $L_G$ in (\ref{d2}):
\begin{equation}
\det L_G(1,2,3,4)-\det L_G(1,2,1,3)+\det L_G(1,2,2,3)=0.
\end{equation}
The expansion in $\alpha_4$ gives
\begin{equation}\label{b39}
\ph^{12,34}=b^2_4-b^1_4.
\end{equation}
After the elimination of $\alpha_4$ by (\ref{lin11}),  the equalities (\ref{b37}) and (\ref{b39}) imply
\begin{equation}\label{b40}
S_1\equiv [\ph^{12,34}]-[a,\ph^{12,34}] \mod \LL.
\end{equation}
Now we are going to compute $S_2$:
\begin{equation}
S_2=[f^3_a,f_{a3},[f_b,f_c]_3,[f_b,f_d]_3]=[a,\ph^{12}_{34},[f_b,f_c]_3,\ph^{1,3}_2]. 
\end{equation}
We use again the equalities (\ref{b34}) and (\ref{b35}) and now subtract instead of adding. We immediately get
\begin{equation}
[f_b,f_c]_3=[\ph^{13}_2\ph^2_{13}-\ph^{23}_1\ph^1_{23}]= \ph^{12,13}\ph^{2,3}_1+\ph^{12,23}\ph^{1,3}_2.
\end{equation}
It follows that 
\begin{multline}
S_2=[a,\ph^{12}_{34},\ph^{1,3}_2,\ph^{12,13}\ph^{2,3}_1]=[a,\ph^{12}_{34},\ph^{1,3}_2,(a\alpha_4+b^1_4)\ph^{2,3}_1]\\
=[a,\ph^{12}_{34},b^4_2\alpha_4+\ph^{1,3}_{24},b^1_4\ph^{2,3}_1].
\end{multline} 
The last term of the last brackets disappears, this follows from (\ref{b15}): $b^1_4$ vanishes on $\cV(a,\ph^{12}_{34})$.
By (\ref{lin11}), eliminating $\alpha_4$, one now computes 
\begin{equation}
S_2\equiv [a,\ph^{12}_{34}] - [a,\ph^{12}_{34},b^4_2]\mod \LL.
\end{equation}
The third summand of (\ref{d36}), $S_3$, takes the form  
\begin{multline}\label{b42}
S_3=[f^3_a,f_{a3},f^3_b,f_{b3},[f_c,f_d]_3]=[a,\ph^{12}_{34},\ph^{13}_{24},\ph^1_{23},\ph^{2,3}_1]=\\ [a,\ph^{12}_{34},\ph^{13}_{24},\ph^{14}_{23}\alpha_4+\ph^1_{234},b^4_1\alpha_4+\ph^{2,3}_{14}].
\end{multline}
We claim that $\ph^{14}_{23}$ lies in the ideal generated by $a,\ph^{12}_{34},\ph^{13}_{24}$. Indeed, $\ph^{14}_{23}=b^1_2+b^1_3$ and, by (\ref{b32}), $b^1_2$ vanishes on $\cV(a,\ph^{13}_{24})$ while $b^1_3$ vanishes on $\cV(a,\ph^{12}_{34})$. Thus only the last polynomial in (\ref{b42}) depends on $\alpha_4$. One computes 
\begin{equation}
S_3\equiv [a,\ph^{12}_{34},\ph^{13}_{24},\ph^1_{234}]-[a,\ph^{12}_{34},\ph^{13}_{24},\ph^1_{234},b^4_1]\mod \LL.
\end{equation}  
Consider the equation similar to (\ref{b36}) but for the collection of edges $(e_1,e_2,e_4)$ instead of $(e_3,e_1,e_2)$: 
\begin{equation}
\ph^{24}_1\ph^1_{24}-\ph^{124}\ph_{124}= \ph^{14,24}\ph^{1,2}_4-\ph^{12,24}\ph^{1,4}_2.
\end{equation}
Each of the appearing polynomials depends on $\alpha_3$. A consideration of the constant coefficient gives
\begin{equation}\label{b45}
\ph^{24}_{13}\ph^1_{234}-a\ph_{1234}=b^4_3\ph^{1,2}_{34}-b^2_3\ph^{1,4}_{23}.
\end{equation}
Consider the variety $Z=\cV(a,\ph^{12}_{34},\ph^{13}_{24})\subset \AAA^{N_G-4}$ and let $Y=Z\backslash Z\cap \cV(b^4_1)$. Since the vanishing of $\ph^{12}_{34}$ implies $b^1_3=0$ and the vanishing of $\ph^{13}_{24}$ implies $b^1_2=0$ on $\cV(a)$ by (\ref{b15}), one gets also $\ph^{14}_{23}=b^1_2+b^1_3=0$ on $\cV(a)$. Hence, again by (\ref{b15}), $b^4_3$ vanishes on $Z$. The equation (\ref{b45}) now implies $\ph^{24}_{13}\ph^1_{234}=0$ on $Z$. Since $\ph^{24}_{13}=b^4_1+b^4_3$, and $b^4_3=0$ while $b^4_1\neq 0$ on $Y$, one derives $Y\cap \cV(\ph^1_{234})\cong Y$. Thus $S_3=[\cV(a,\ph^{12}_{34},\ph^{13}_{24},\ph^1_{234})\backslash \cV(a,\ph^{12}_{34},\ph^{13}_{24},\ph^1_{234},b^4_1)]=[Y]$. One computes 
\begin{multline}\label{b47}
S_2+S_3\equiv ([a,\ph^{12}_{34}] + [a,\ph^{12}_{34},\ph^{13}_{24}])\\ -( [a,\ph^{12}_{34},b^4_2]+ [a,\ph^{12}_{34},\ph^{13}_{24},b^4_1]) \mod \LL.
\end{multline}
For the third summand, one uses the equality  $(b^4_2)^2\equiv \ph^{14}_{23}\ph^{34}_{12}\mod a$ in (\ref{b15}) and gets
\begin{multline}\label{b48}
[a,\ph^{12}_{34},b^4_2]=[a,\ph^{12}_{34},\ph^{14}_{23}\ph^{34}_{12}]=[a,\ph^{12}_{34},\ph^{14}_{23}]+[a,\ph^{12}_{34},\ph^{34}_{12}]\\
-[a,\ph^{12}_{34},\ph^{14}_{23},\ph^{34}_{12}].
\end{multline}
Similarly, 
\begin{multline}\label{b49}
[a,\ph^{12}_{34},\ph^{13}_{24},b^4_1]=[a,\ph^{12}_{34},\ph^{13}_{24},\ph^{24}_{13}\ph^{34}_{12}]=[a,\ph^{12}_{34},\ph^{13}_{24},\ph^{24}_{13}]+\\
[a,\ph^{12}_{34},\ph^{13}_{24},\ph^{34}_{12}]-[a,\ph^{12}_{34},\ph^{13}_{24},\ph^{24}_{13},\ph^{34}_{12}].
\end{multline}
The last summands of (\ref{b48}) and (\ref{b49}) coincide. Indeed, $\ph^{12}_{34}=0=\ph^{13}_{24}$ on $\cV(a)$ imply $\ph^{23}_{14}=0$ since $e_1,e_2,e_3$ form a triangle in $G\q 4$, and also $\ph^{23}_{14}=0=\ph^{34}_{12}$ imply $\ph^{24}_{13}=0$ in the triangle $e_2,e_3,e_4$ in $G\q 1$. One derives
\begin{multline}
[a,\ph^{12}_{34},\ph^{13}_{24},\ph^{24}_{13},\ph^{34}_{12}]=[a,\ph^{12}_{34},\ph^{13}_{24},\ph^{34}_{12}]=[a,\ph^{12}_{34},\ph^{13}_{24},\ph^{24}_{13}]\\
=[a,\ph^{12}_{34},\ph^{14}_{23},\ph^{34}_{12}].
\end{multline}
Hence, 
\begin{multline}\label{b51}
S_2+S_3\equiv [a,\ph^{12}_{34}] + [a,\ph^{12}_{34},\ph^{13}_{24}]-[a,\ph^{12}_{34},\ph^{14}_{23}]\\-[a,\ph^{12}_{34},\ph^{34}_{12}] \mod \LL.
\end{multline}
The first summand on the right hand side is divisible by $\LL$ by Proposition \ref{lemA18} applied to $[\ph^{3}_{G'},\ph_{G',3}]$ for $G'=G\backslash 4\q \{1,2\}$. Similarly, the second summand on the right hand side of the equality 
\begin{equation}\label{b52}
[a,\ph^{12}_{34},\ph^{13}_{24}]=[a,\ph^{12}_{34}]+[a,\ph^{13}_{24}]-[a,\ph^{12}_{34}\ph^{13}_{24}]
\end{equation}
is divisible by $\LL$. Using the equality (\ref{b15}), one gets 
\begin{equation}
[a,\ph^{12}_{34},\ph^{13}_{24}]\equiv -[a,\ph^{12}_{34}\ph^{13}_{24}]\equiv -[a,b^1_4] \mod \LL.
\end{equation}
The same thing can be done with $[a,\ph^{12}_{34},\ph^{14}_{23}]$ in (\ref{b51}). One can also do the step (\ref{b52}) for $[a,\ph^{12}_{34},\ph^{34}_{12}]$. The congruence (\ref{b51}) now implies
\begin{equation}
S_2+S_3\equiv [a,b^1_3]-[a,b^1_4] + [a,\ph^{12}_{34}\ph^{34}_{12}] \mod \LL. 
\end{equation}
By (\ref{d36}) and (\ref{b40}), we finally get the desired formula
\begin{multline}
[\ph^{12},\ph^1_2,\ph^2_1,\ph_{12}]\equiv S_1+S_2+S_3\equiv [\ph^{12,34}]-[a,\ph^{12,34}]\\ +[a,b^1_3]-[a,b^1_4] + [a,\ph^{12}_{34}\ph^{34}_{12}] \mod \LL.
\end{multline}
\end{proof}

What we mean a \emph{4-face formula} is just the ability to express the class $[Z_G]\!\mod \LL^3$ in the formula (\ref{d23}) by use of classes of the intersections of up to 3 hypersurafaces, after Theorem \ref{prop51}. It is possible to write down a more concrete formula on the level of point-counting function for, say, log-divergent graphs, but this does not lead to new results. Nevertheless, the very important application of the technique above is the following result:

\begin{proposition} \label{Thm42}
Let $G$ be a graph with $N_G\geq 2n_G$ and assume it has a 4-face. Let $e_1$ and $e_2$ be two adjacent edges of a 4-cycle bounding this face. Then
\begin{equation}\label{b58}
[\ph^{12},\ph^1_2,\ph^2_1,\ph_{12}]_q\equiv 0 \mod q.
\end{equation}
\end{proposition}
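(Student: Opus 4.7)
The plan is to apply Theorem~\ref{prop51} to rewrite $[\ph^{12},\ph^1_2,\ph^2_1,\ph_{12}]$ modulo $\LL$ as the five-term sum on the right of \eqref{b28}, and then pass to $\FF_q$-point counts; since $[\cdot]_q$ factors through $K_0(Var_k)$ and sends $\LL\mapsto q$, the claim reduces to showing that each of
\[
[\ph^{12,34}]_q,\ [a,\ph^{12,34}]_q,\ [a,b^1_3]_q,\ [a,b^1_4]_q,\ [a,\ph^{12}_{34}\ph^{34}_{12}]_q
\]
is divisible by $q$, where $a=\ph^{123}_4$. None of these polynomials involves $\alpha_1,\dots,\alpha_4$, so the ambient affine space is $\AAA^{N_G-4}$.

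For the first four intersections I would invoke the Chevalley--Warning theorem (Theorem~\ref{CW_thm}). The degree formula $\deg \ph^{I,J}_K=n_G-|I|$ gives $\deg a=n_G-3$, $\deg \ph^{12,34}=n_G-2$ and $\deg b^1_j\leq n_G-2$; combined with the hypotheses $N_G\geq 2n_G$ and $n_G\geq 3$ (a 4-face needs four distinct vertices), the degree sum in each of these four intersections is strictly less than $N_G-4$, yielding vanishing modulo $q$. For the fifth term, the scissor identity $[a,fg]=[a,f]+[a,g]-[a,f,g]$ peels off two more CW-vanishing pieces and reduces matters to the three-polynomial intersection $[a,\ph^{12}_{34},\ph^{34}_{12}]_q$, whose degree sum $3n_G-7$ exceeds $N_G-4$ in the log-divergent case $N_G=2n_G$ and so resists a direct CW argument.

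To handle this last piece I would exploit the square identity \eqref{b15}, which modulo $a$ gives $(b^1_4)^2\equiv\ph^{12}_{34}\ph^{13}_{24}$ and $(b^3_2)^2\equiv\ph^{13}_{24}\ph^{34}_{12}$. Over $\FF_q$ these provide the set-theoretic equality
\[
\cV(a,b^1_4,b^3_2)=\cV(a,\ph^{12}_{34},\ph^{34}_{12})\cup\cV(a,\ph^{13}_{24}),
\]
and inclusion-exclusion, together with iterated applications of the analogous square identities for the remaining pairs $(b^i_t)^2\equiv\ph^{ij}_{kt}\ph^{ik}_{jt}$ and the four triangle identities $b^t_i b^t_j+b^t_j b^t_k+b^t_i b^t_k\equiv 0\pmod{a}$ coming from the triangles $\{i,j,k\}=\{1,2,3,4\}\setminus\{t\}$ in the sub-quotients $G\q t$ (Example~\ref{Extriang}), reduces the obstacle to 2-polynomial intersections on which CW applies. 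The main technical difficulty is exactly this last step: a single application of the square identity yields a tautology modulo $q$, so one must carefully interleave several such identities around the 4-face before the reduction terminates in a CW-tractable expression --- this is the combinatorics that makes Proposition~\ref{Thm42} substantially more intricate than its triangle analogue in Proposition~\ref{lem19}.
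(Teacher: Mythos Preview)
Your treatment of the first four summands on the right of \eqref{b28} is correct and coincides with the paper's argument: each is a one- or two-hypersurface intersection in $\AAA^{N_G-4}$ whose total degree is at most $2n_G-5<N_G-4$, so Chevalley--Warning applies.

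The gap is in the fifth term $[a,\ph^{12}_{34}\ph^{34}_{12}]_q$. Here the paper does \emph{not} iterate the square identities \eqref{b15}. Instead it introduces an auxiliary graph $G'$ obtained from $G$ by deleting $e_1,\dots,e_4$, identifying the two opposite vertices of the $4$-face, and inserting two new edges $e_s,e_t$ joining this new vertex to the remaining two. One then has the identifications
\[
a=\ph^{123}_{G,4}=\ph^{st}_{G'},\qquad \ph^{12}_{G,34}=\ph^{s}_{G',t},\qquad \ph^{34}_{G,12}=\ph^{t}_{G',s},
\]
and the \emph{first Dodgson identity in $G'$} for the pair $s,t$ gives $\ph^{s}_{G',t}\ph^{t}_{G',s}-\ph^{st}_{G'}\ph_{G',st}=(\ph^{s,t}_{G'})^2$. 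Hence
\[
\cV(a,\ph^{12}_{34}\ph^{34}_{12})=\cV(\ph^{st}_{G'},\ph^{s,t}_{G'}),
\]
a two-polynomial intersection of total degree $(n_G-3)+(n_G-2)=2n_G-5<N_G-4$, to which Chevalley--Warning applies directly. This single step replaces your entire iterative scheme.

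Your proposed reduction, by contrast, is not shown to terminate. The set-theoretic equality you write gives
\[
[a,\ph^{12}_{34},\ph^{34}_{12}]_q=[a,b^1_4,b^3_2]_q-[a,\ph^{13}_{24}]_q+[a,\ph^{12}_{34},\ph^{34}_{12},\ph^{13}_{24}]_q,
\]
which trades one CW-inaccessible triple intersection (total degree $3n_G-7$) for another triple intersection of the same total degree together with a quadruple intersection. Further applications of \eqref{b15} or of the triangle relations produce more terms of the same shape; you give no mechanism for cancellation and no termination argument. You yourself flag this as ``the main technical difficulty'' without resolving it, so as written the proof is incomplete. The missing idea is precisely the graph modification $G\rightsquigarrow G'$: it is what allows the product $\ph^{12}_{34}\ph^{34}_{12}$ --- which is not of the form $\ph^I_J\ph^J_I$ in $G$ --- to be recognized as such a product in $G'$, where a single Dodgson identity finishes the job.
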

\begin{proof}
Denote by $e_3$ and $e_4$ the two other edges of the named 4-cycle going in the natural ordering. Consider a graph $G'$ to be the following modification of $G$ : we delete 4 first edges, introduce 2 new edges $e_s$ and $e_t$ instead, and identify 2 vertices as shown on Figure 1. 
\begin{figure}[h]
\centering
\begin{picture}(250,120)
\thicklines
\put(10,50){\line(2,-3){30}}
\put(10,50){\line(2,3){30}}
\put(10,50){\line(-2,-1){10}}
\put(10,50){\line(-1,0){10}}
\put(10,50){\line(-2,1){10}}
\put(70,50){\line(-2,-3){30}}
\put(70,50){\line(-2,3){30}}
\put(70,50){\line(2,-1){10}}
\put(70,50){\line(1,0){10}}
\put(70,50){\line(2,1){10}}
\put(40,95){\line(1,2){5}}
\put(40,95){\line(0,1){10}}
\put(40,95){\line(-1,2){5}}
\put(40,5){\line(-1,-2){5}}
\put(40,5){\line(0,-1){10}}
\put(40,5){\line(1,-2){5}}
\put(10,50){\circle*{3}}
\put(70,50){\circle*{3}}
\put(40,95){\circle*{3}}
\put(40,5){\circle*{3}}
\put(12,77){\text{$e_1$}}\put(56,77){\text{$e_2$}} \put(45,33){\text{$e_3$}}\put(12,20){\text{$e_4$}}
\put(68,10){\text{$G$}}
\put(110,35){\vector(1,0){40}}
\put(190,50){\line(0,1){45}}
\put(190,50){\line(0,-1){45}}
\put(190,50){\line(-2,-1){10}}
\put(190,50){\line(-1,0){10}}
\put(190,50){\line(-2,1){10}}
\put(190,50){\line(2,-1){10}}
\put(190,50){\line(1,0){10}}
\put(190,50){\line(2,1){10}}
\put(190,95){\line(0,1){10}}
\put(190,95){\line(1,2){5}}
\put(190,95){\line(-1,2){5}}
\put(190,5){\line(0,-1){10}}
\put(190,5){\line(-1,-2){5}}
\put(190,5){\line(1,-2){5}}
\put(190,50){\circle*{3}}
\put(190,5){\circle*{3}}
\put(190,95){\circle*{3}}\put(210,10){\text{$G'$}}
\put(175,20){\text{$e_t$}}\put(175,70){\text{$e_s$}}
\thinlines 
\end{picture}
\caption{From $G$ to $G'$.}
\end{figure}
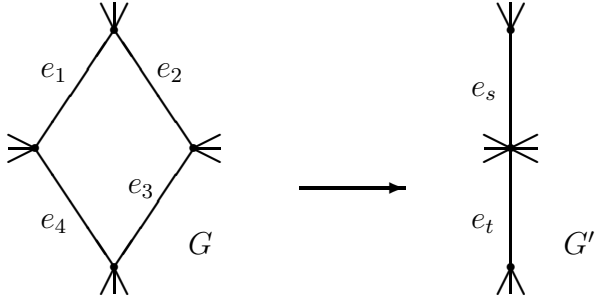\\
It has $N_G-2$ edges $e_s,e_t,e_5,e_6, \ldots,e_{N_G}$ and $n_{G'}=n_G-1$. One immediately sees that 
\begin{equation}
\ph^{12}_{G,34}=\ph^s_{G',t}\quad \text{and}\quad \ph^{34}_{G,12}=\ph^t_{G',s}.
\end{equation}
Using the first Dodgson identity for $I=\{a\}$, $J=\{b\}$, one gets
\begin{equation}\label{b60}
\cV(a,\ph^{12}_{G,34}\ph^{34}_{G,12})\cong\cV(\ph^{st}_{G'},\ph^s_{G',t}\ph^t_{G',s})\cong\cV(\ph^{st}_{G'},\ph^{s,t}_{G'}).
\end{equation}
Since the point-counting functor factors through the Grothendieck ring, (\ref{b28}) implies the following congruence:
\begin{multline}\label{b61}
[\ph^{12},\ph^1_2,\ph^2_1,\ph_{12}]_q\equiv [\ph^{12,34}]_q-[a,\ph^{12,34}]_q+\\ [a,b^1_3]_q-[a,b^1_4]_q + [\ph^{st}_{G'},\ph^{s,t}_{G'}]_q \mod q.
\end{multline}
One computes the degrees:
\begin{equation}
\deg b^i_j=\deg \ph^{12,34}_{G'}=\deg \ph^{s,t}_{G'}=n_G-2,\quad \deg a=\deg \ph^{st}_{G'}=n_G-3. 
\end{equation}
Since all of the varieties in (\ref{b61}) are considered to be in $\AAA^{N_G-4}$, and $N_G\geq 2n_G$, Chevalley-Warning theorem  implies the vanishing of all the summands on the right hand side. Hence
\begin{equation}
[\ph^{12},\ph^1_2,\ph^2_1,\ph_{12}]_q \equiv 0 \mod q.
\end{equation}
\end{proof}
Proposition \ref{Thm42} gives us some control on $[\ph^{12},\ph^1_2,\ph^2_1,\ph_{12}]_q$, this can help to compute $c_2^{dual}(G)$. Returning to Formula (\ref{d23}), we also want to "understand" the summand $[\ph^{1,2}]_q$ in this sense. 
\begin{lemma}\label{lemm43}
Let $G$ be a graph with $N_G> 2n_G$ having a 4-face. Let $e_1$ and $e_2$ be two adjacent edges bounding this 4-face. Then
\begin{equation}
[\ph^{1,2}]_q \equiv 0 \mod q^2.
\end{equation}
\end{lemma}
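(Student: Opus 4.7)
I would adapt the elimination strategy from the proof of Theorem \ref{prop51}, applying it directly to $\ph^{1,2}$. The first step is to use the 4-face identities (\ref{b14}) to expand
$\ph^{1,2} = a\,\alpha_3\alpha_4 + \ph^{13,23}_4\,\alpha_3 + \ph^{14,24}_3\,\alpha_4 + \ph^{1,2}_{34}$,
where $a=\ph^{134,234}=\ph^{123}_4$, and then to successively eliminate $\alpha_3$ and then $\alpha_4$ via Lemma \ref{lemA17}(1),(2). This produces, in the Grothendieck ring, a formula for $[\ph^{1,2}]$ involving $\LL^{N-3}$, $\LL^{N-4}$, $[a]$, two binary intersections of the form $[a,\cdot]\LL$, a resultant term $[a\,\ph^{1,2}_{34}-\ph^{13,23}_4\,\ph^{14,24}_3]\LL$, and a quadruple intersection $[a,\ph^{13,23}_4,\ph^{14,24}_3,\ph^{1,2}_{34}]\LL^2$.

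The resultant term is simplified via the first Dodgson identity: with $I=\{1\},J=\{2\}$ and the four-edge choice $3,4,4,3$, this identity reads $\ph^{13,23}\ph^{14,24}-\ph^{13,24}\ph^{14,23}=a\,\ph^{1,2}$, and comparing the constant terms in $\alpha_3,\alpha_4$ yields $a\,\ph^{1,2}_{34}-\ph^{13,23}_4\,\ph^{14,24}_3=-\ph^{13,24}\,\ph^{14,23}$, so the resultant rewrites as $[\ph^{13,24}]+[\ph^{14,23}]-[\ph^{13,24},\ph^{14,23}]$. Now I pass to $\FF_q$-point counts and reduce mod $q^2$. The pure powers $q^{N-3},q^{N-4}$ vanish, since a 4-face together with $N>2n_G$ forces $n_G\geq 3$ and hence $N\geq 7$. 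The isolated term $[a]_q$ is handled by the key observation that $a=\ph_{G'}$ for $G':=G/\{1,2,3\}\setminus 4$, the graph obtained by collapsing the 4-face to a single vertex; since $h_{G'}=h_G-1\geq 3$, Proposition \ref{lemA18}(1) gives $q^2\mid[a]_q$. Every remaining summand carries an explicit factor of $\LL$ (or $\LL^2$), and the inner intersections, all living in $\AAA^{N-4}$, are killed modulo $q$ by Chevalley-Warning (Theorem \ref{CW_thm}), using the degree bounds $\deg a=n_G-3$, $\deg\ph^{13,23}_4=\deg\ph^{14,24}_3=\deg\ph^{13,24}=\deg\ph^{14,23}=n_G-2$, $\deg\ph^{1,2}_{34}\leq n_G-1$, and the hypothesis $N>2n_G$.

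The main technical obstacle is identifying $a$ as the full dual graph polynomial of the minor $G/\{1,2,3\}\setminus 4$, which unlocks the extra factor of $q$ for $[a]_q$ via Proposition \ref{lemA18}; Chevalley-Warning alone would only yield $q\mid[a]_q$, insufficient for the claim. The remainder is careful bookkeeping with resultants and dimension counts, closely paralleling the arguments in Theorem \ref{prop51} and Proposition \ref{Thm42}.
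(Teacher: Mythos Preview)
Your proof is correct and follows essentially the same route as the paper: both eliminate $\alpha_3$ and $\alpha_4$ via Lemma~\ref{lemA17}, rewrite the resultant $a\,\ph^{1,2}_{34}-\ph^{13,23}_4\ph^{14,24}_3$ as $\pm\ph^{13,24}\ph^{14,23}$ using the first Dodgson identity, kill the low-degree terms by Chevalley--Warning, and handle $[a]_q$ by recognizing $a=\ph^{123}_4=\ph_{G'}$ and invoking Proposition~\ref{lemA18}(1). The only differences are organizational---you carry out the double elimination in one pass and make the identification $a=\ph_{G'}$ explicit, whereas the paper treats $[\ph^{13,23}]_q$ in a separate step (\ref{b73}) and leaves that identification implicit.
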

\begin{proof} By Lemma \ref{lemA17}, we can get rid of the variables $\alpha_3$ and $\alpha_4$ :
\begin{multline}\label{b70}
[\ph^{1,2}]=[\ph^{13,23}\alpha_3+\ph^{1,2}_3]= \LL^{N-3}-[\ph^{13,23}]+\LL[\ph^{13,23},\ph^{1,2}_3]\\ =\LL^{N-3}-[\ph^{13,23}]+\LL^2[\ph^{134,234}, \ph^{13,23}_{4}, \ph^{14,24}_{3},\ph^{1,2}_{34}]\\ +\LL[\ph^{134,234}\ph^{1,2}_{34}-\ph^{13,23}_{4}\ph^{14,24}_{3}] -\LL[\ph^{134,234},\ph^{14,24}_{3}].
\end{multline}
Applying the first Dodgson identity again (just to get a nicer form) and then appling the Chevalley-Warning theorem, we obtain
\begin{equation}\label{b71}
[\ph^{134,234}\ph^{1,2}_{34}-\ph^{13,23}_{4}\ph^{14,24}_{3}]_q\equiv [\ph^{13,24}\ph^{14,23}]_q\equiv 0 \mod q
\end{equation}
since we are dealing with a product of total degree $2(n_G-2)=2n_G-4$ of $N_G-4$ variables and $N_G>2n_G$ by the assumption. Next, the application of the Chevalley-Warning theorem also implies
\begin{equation}\label{b72}
[\ph^{134,234},\ph^{14,24}_{3}]_q \equiv 0 \mod q.
\end{equation}
By Lemma \ref{lemA17}, we compute
\begin{multline}\label{b73}
[\ph^{13,23}]_q=[\ph^{134,234}\alpha_4+\ph^{13,23}_4]_q=q^{N_G-4}-[\ph^{134,234}]_q\\
+q[\ph^{134,234},\ph^{13,23}_4]_q \equiv 0 \mod q^2,
\end{multline}
here we have again used the Chevalley-Warning vanishing for the last summand and also Proposition \ref{lemA18}, part (1) for $\cV(\ph^{134,234})$. 
Now (\ref{b70}) together with (\ref{b71})-(\ref{b73}) imply
the desired congruence.
\end{proof}

Now we are ready to prove the main theorem about the structure of $[Z_G]_q$ in the 4-face case.

\begin{theorem}\label{thm54}
Let $G$ be a graph with $N_G>2n_G$. Assume $G$ has a 4-face. Then 
\begin{equation}
[Z_G]_q\equiv 0 \mod q^3.
\end{equation}
\end{theorem}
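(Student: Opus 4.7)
The plan is to pass to $\FF_q$-points in the identity (\ref{d23}) and bound each of the five resulting terms modulo $q^3$. After applying the point-counting functor, (\ref{d23}) becomes
\begin{equation*}
[Z_G]_q = q^{N_G-2} - [\ph^1]_q + q^2 [\ph^{12},\ph^1_2,\ph^2_1,\ph_{12}]_q + q [\ph^{1,2}]_q - q [\ph^{12},\ph^2_1]_q,
\end{equation*}
where $e_1,e_2$ are any two adjacent edges of the 4-face. Since a 4-cycle uses four distinct vertices, $n_G\geq 3$, so $N_G-2\geq 5$ and the first term is already $\equiv 0\mod q^3$. The third term vanishes modulo $q^3$ directly from Proposition \ref{Thm42}, and the fourth from Lemma \ref{lemm43}; these encode the real content of the 4-face analysis just established.

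For the second term, I would use $\ph^1 = \ph_{G\q 1}$. The graph $G':=G\q 1$ satisfies $N_{G'}=N_G-1>2n_G-2=2n_{G'}$, and contracting $e_1$ collapses the 4-face into a triangle on $e_2,e_3,e_4$. Applying (\ref{al1}) of Proposition \ref{lem19} to $G'$ then gives $[\ph^1]_q=[Z_{G'}]_q\equiv 0\mod q^3$.

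The last term $q[\ph^{12},\ph^2_1]_q$ is handled in the same spirit. Setting $H:=G\q 2$, I would identify $\ph^{12}_G=\ph^1_H$ and $\ph^2_{G,1}=\ph_{H,1}$, so $[\ph^{12},\ph^2_1]_q=[\ph^1_H,\ph_{H,1}]_q$. The graph $H$ still satisfies $N_H>2n_H$, and contracting $e_2$ turns the 4-face into a triangle $e_1 e_3 e_4$ containing the edge $e_1$. Applying (\ref{al2}) of Proposition \ref{lem19} to $H$ then yields $[\ph^{12},\ph^2_1]_q\equiv 0\mod q^2$, so this term is $\equiv 0\mod q^3$.

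Summing the five contributions gives $[Z_G]_q\equiv 0\mod q^3$. The main obstacle is not the assembly itself: the heavy lifting has already been done in Proposition \ref{Thm42} and Lemma \ref{lemm43}. The only care needed here is the choice of the auxiliary graphs $G\q 1$ and $G\q 2$ — each of which inherits $N>2n$ and, crucially, a triangle obtained by collapsing the 4-face — so that the triangle version of Proposition \ref{lem19} can cover the two terms not produced directly by the 4-face machinery.
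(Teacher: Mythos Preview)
Your proposal is correct and follows essentially the same route as the paper: pass (\ref{d23}) to point counts, then kill the five terms using Proposition~\ref{Thm42}, Lemma~\ref{lemm43}, and two applications of Proposition~\ref{lem19} to the contracted graphs $G\q 1$ and $G\q 2$, each of which inherits $N>2n$ and a triangle from the collapsed 4-face. Your write-up is in fact slightly more careful than the paper's in one spot: you correctly invoke (\ref{al2}) to get $[\ph^{12},\ph^2_1]_q\equiv 0\bmod q^2$ (so that the full term is $\equiv 0\bmod q^3$), whereas the paper's displayed congruence (\ref{b77}) states only $\bmod\, q$, which by itself would not suffice.
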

\begin{proof}
The equality (\ref{d23}) in the Grothendieck ring implies the corresponding equality for the point-counting functions:
\begin{equation}\label{b75}
[Z_G]_q=q^{N_G-2}-[\ph^1]_q+ q^2[\ph^{12},\ph^1_2,\ph^2_1,\ph_{12}]_q+ q[\ph^{1,2}]_q-q[\ph^{12},\ph^2_1]_q.
\end{equation}
The graph $G'=G\q 1$ has a triangle formed by the edges $e_2$, $e_3$, $e_4$, and one has $N_{G'}>2n_{G'}$. By Proposition \ref{lem19}, 
\begin{equation}\label{b76}
[\ph^1_G]_q =[\ph_{G'}]\equiv 0 \mod q^3.
\end{equation}
The variety $\cV(\ph^{12},\ph^2_1)$ is isomorphic to $\cV(\ph^{1}_{G'},\ph_{G',1})$ for $G''=G\q 2$. The graph $G''$ has a triangle formed by the edges $e_1$, $e_3$, $e_4$, it satisfies $N_{G''}>2n_{G''}$. Proposition \ref{lem19} is again applicable:
\begin{equation}\label{b77}
[\ph^{12}_G,\ph^2_{G,1}]_q \equiv [\ph^{1}_{G''},\ph_{G'',1}]_q \equiv 0 \mod q.
\end{equation}
By Proposition \ref{Thm42} and Lemma \ref{lemm43}, one also has
\begin{equation}\label{b78}
[\ph^{1,2}]_q\equiv q[\ph^{12},\ph^1_2,\ph^2_1,\ph_{12}]_q\equiv 0 \mod q^2.
\end{equation} 
The substitution of (\ref{b76}) -- (\ref{b78}) into (\ref{b75}) implies the statement.
\end{proof}

We can also derive a short formula for the $c_2$ invariant in the case $G$ being log-divergent. This is what we call a \emph{4-face formula} for $c_2^{dual}(G)$.
\begin{theorem}
Let $G$ be a log-divergent graph ($N_G=2n_G$) with a 4-face bounded by the edges $e_1,\ldots,e_4$. Then 
\begin{equation}\label{b81}
c_2^{dual}(G)\equiv -[\ph^{13,24},\ph^{14,23}]_q \mod q.
\end{equation}
\end{theorem}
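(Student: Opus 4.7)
The strategy is to reduce $[Z_G]_q$ modulo $q^3$ via the basic one-variable elimination (\ref{d39}), then extract $c_2^{dual}(G)$ by propagating the $4$-face identities and Dodgson relations through one more round of elimination.

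First, applying (\ref{d39}) to $G$ gives $[Z_G]_q = q^{N_G-1} - [\ph^1_G]_q + q[\ph^1_G, \ph_{G, 1}]_q$. For $n_G \geq 3$, $q^{N_G-1} \equiv 0 \mod q^3$. Since $G\q 1$ carries the induced triangle $e_2, e_3, e_4$, Proposition \ref{prop35} applied to $G\q 1$, combined with the identifications $\ph^{124, 134}_G = \pm a$ and $\ph^{12, 13}_{G, 4} = \pm b^1_4$ from (\ref{b14}), yields $[\ph^1_G]_q = [Z_{G \q 1}]_q \equiv q^2 [a, b^1_4]_q \mod q^3$. Chevalley-Warning (Theorem \ref{CW_thm}) with degree sum $(n_G - 3) + (n_G - 2) = 2n_G - 5 < 2n_G - 4$ gives $[a, b^1_4]_q \equiv 0 \mod q$, so $[\ph^1_G]_q \equiv 0 \mod q^3$. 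Hence $c_2^{dual}(G) \equiv [\ph^1_G, \ph_{G, 1}]_q / q \mod q$.

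Next, expand $[\ph^1_G, \ph_{G, 1}]_q = qT_1 + T_2 - T_3$ by (\ref{d22}) with $\alpha_2$, where $T_1 = [\ph^{12}, \ph^1_2, \ph^2_1, \ph_{12}]_q$, $T_2 = [\ph^{1,2}]_q$, $T_3 = [\ph^{12}, \ph^2_1]_q$. I claim:
(i) $T_1 \equiv 0 \mod q$, by substituting Theorem \ref{prop51}: each of the five summands in (\ref{b28}) is controlled by Chevalley-Warning in the log-divergent case, since all have degree sums strictly less than $2n_G - 4$.
(ii) $T_2 \equiv q \cdot [\ph^{13, 24} \ph^{14, 23}]_q \mod q^2$, by replaying the double elimination of $\alpha_3, \alpha_4$ from the proof of Lemma \ref{lemm43}; the key step is the Dodgson identity (\ref{b71}), $\ph^{134, 234}\ph^{1, 2}_{34} - \ph^{13, 23}_4 \ph^{14, 24}_3 = \pm \ph^{13, 24}\ph^{14, 23}$, which supplies the unique surviving middle term. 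All boundary contributions vanish mod $q^2$; in particular $[\ph^{13, 23}]_q \equiv 0 \mod q^2$, after eliminating $\alpha_4$ and using Proposition \ref{lemA18}(1) applied to $[a]_q$.
(iii) $T_3 \equiv 0 \mod q^2$. I would apply (\ref{d22}) to $T_3 = [\ph^1_{G\q 2}, \ph_{G\q 2, 1}]_q$ with $\alpha_3$ (an edge of both the $4$-face of $G$ and the triangle $e_1, e_3, e_4$ of $G\q 2$). The ``four-polynomial'' subterm then vanishes mod $q$ by using the $4$-face identity $\ph^{23}_{14} = \ph^{24}_{13}$ on $\cV(a, \ph^{12}_{34})$, which follows from (\ref{b15}) applied to both $b^1_t$ and $b^2_t$; the middle subterm $[\ph^{12, 23}]_q$ is $\equiv 0 \mod q^2$ by the same method used for $[\ph^{13, 23}]_q$; the remaining subterm is $[a, \ph^{23}_1]_q$, and this must be shown to vanish mod $q^2$.

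The main obstacle is exactly this last vanishing. The route I would take is to prove $[\ph^{12}_G]_q = [Z_{G \q 12}]_q \equiv 0 \mod q^3$ via a double-edge analog of Proposition \ref{lem19} applied to $G\q 12$ (whose $4$-face edges $e_3, e_4$ form a double edge), and then invoke the argument of Proposition \ref{lem19}(\ref{al2}) for $G\q 2$ (using that $[Z_{G\q 2}]_q \equiv 0 \mod q^3$ follows in turn from Proposition \ref{prop35} applied to the triangle $e_1, e_3, e_4$ of $G\q 2$). Combining (i)--(iii) gives $T_2 - T_3 \equiv q \cdot [\ph^{13, 24} \ph^{14, 23}]_q \mod q^2$, hence $c_2^{dual}(G) \equiv [\ph^{13, 24} \ph^{14, 23}]_q \mod q$. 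Finally, the inclusion-exclusion
\[
[\ph^{13, 24}\ph^{14, 23}]_q = [\ph^{13, 24}]_q + [\ph^{14, 23}]_q - [\ph^{13, 24}, \ph^{14, 23}]_q,
\]
combined with Chevalley-Warning on each single hypersurface (degree $n_G - 2$ in $2n_G - 4$ variables), delivers the stated formula $c_2^{dual}(G) \equiv -[\ph^{13, 24}, \ph^{14, 23}]_q \mod q$.
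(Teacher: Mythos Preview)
Your overall architecture matches the paper's: eliminate $\alpha_1,\alpha_2$ via (\ref{d39}) and (\ref{d22}), kill $T_1$ and $T_3$ modulo the right powers of $q$, and extract the surviving resultant from $T_2$ using the Dodgson identity in (\ref{b71}). The final inclusion-exclusion step is also what the paper does implicitly. However, step (i) as written has a genuine gap.

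You claim that all five summands in (\ref{b28}) have degree sum strictly less than $2n_G-4$. This is false for the last one. The term $[a,\ph^{12}_{34}\ph^{34}_{12}]_q$ involves a polynomial of degree $n_G-3$ and a \emph{product} of degree $2(n_G-2)=2n_G-4$, so the total degree is $3n_G-7$, which exceeds the number of variables $N_G-4=2n_G-4$ whenever $n_G\geq 3$. Chevalley--Warning does not apply, and inclusion-exclusion does not save you either, since $[a,\ph^{12}_{34},\ph^{34}_{12}]_q$ still has degree sum $3n_G-7$. The paper handles exactly this term by the auxiliary-graph trick of Proposition~\ref{Thm42}: one builds the graph $G'$ of Figure~1 so that $a=\ph^{st}_{G'}$, $\ph^{12}_{34}=\ph^{s}_{G',t}$, $\ph^{34}_{12}=\ph^{t}_{G',s}$, and then the Dodgson identity for $G'$ gives $\cV(a,\ph^{12}_{34}\ph^{34}_{12})\cong\cV(\ph^{st}_{G'},\ph^{s,t}_{G'})$, which \emph{does} satisfy the Chevalley--Warning bound (degree sum $2n_G-5$). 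Without this reinterpretation, $T_1\equiv 0\bmod q$ is unproved.

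A secondary remark on (iii): your route works but is a detour. The paper simply observes that $T_3=[\ph^{1}_{G\q 2},\ph_{G\q 2,1}]_q$ with $e_1$ lying in the triangle $e_1,e_3,e_4$ of $G\q 2$, and since $N_{G\q 2}=2n_G-1>2n_{G\q 2}$, Proposition~\ref{lem19}(\ref{al2}) gives $T_3\equiv 0\bmod q^2$ in one stroke. You end up reproving that proposition by hand.
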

\begin{proof}
By (\ref{b58}),   we know the congruence $[\ph^{12},\ph^1_2,\ph^2_1,\ph_{12}]_q\equiv 0\mod q$ for a log-divergent graph $G$. Thus, (\ref{b75}) yields 
\begin{equation}\label{b82}
[Z_G]_q\equiv q[\ph^{1,2}]_q-[\ph^1]_q-q[\ph^{12},\ph^2_1]_q\mod q.
\end{equation} 
Since $[\ph^{12},\ph^2_1]_q=[\ph^1_{G'},\ph_{G',1}]_q$ for $G'=G\q 2$, a graph with a triangle and $N_{G'}>2n_{G'}$, Proposition \ref{lem19} implies $[\ph^{12},\ph^2_1]_q\equiv 0\mod q$. Similarly, $[\ph^1]_q\equiv 0 \mod q^2$.
  
In the proof of Lemma \ref{lemm43}, the only term in the right hand side of (\ref{b70}) that survives mod $q$ for a log-divergent $G$ is the term from (\ref{b71}). Thus, 
\begin{equation}
[Z_G]_q\equiv q^2[\ph^{13,24}\ph^{14,23}]_q \equiv - q^2[\ph^{13,24},\ph^{14,23}]_q \mod q^3.
\end{equation}
\end{proof}
The formula above is, up to a sign, independent of the numeration of the 4 edges. This can be easily shown using (\ref{c25}).

\section{Girth 5 and Conclusion}

Recall that $girth(G)$ is the minimal $n$ such that each cycle of $G$ is of length $\geq n$. In general, $\girth(G)$ is unbounded. Even if we restrict to $\phi^4$ or to log-divergent graphs, it is not very difficult to construct examples of graphs of any given girth.

To establish that a graph is duality admissible (see Definition \ref{def36}), one needs to check the vanishing condition:
\begin{equation}\label{b85}
[\ph_{G'}]_q\equiv 0 \mod q^3
\end{equation} 
for all sub-quotient graphs $G'=G\backslash I\q J$ for any $I,J\subset E(G)$ with $|J|>|I|\geq 0$, $|I|\leq n_G-3$. If $G'$ has a cycle of length at most 3, then the vanishing follows from Proposition \ref{lem19}   since $N_{G'}=N_G-|I|-|J|>2(n_G-|J|)=n_{G'}$. If $G'$ does not have a triangle, but does have a cycle of length 4, then we again obtain the congruence (\ref{b85}) by Theorem \ref{thm54}. On the other side, if the minimal cycle in the graph $G'$ is on length $\geq 4$, we cannot prove the congruence. The absence of a 3-face and 4-face is an obstruction to our methods. We need to estimate the minimal $N_G$ for which this situation can occur.

A nice (and most physically interesting) situation is the case when a graph $G$ is log-divergent in $\phi^4$ theory. That is, it is obtained from the 4-regular graph $\hat{G}$ (all the vertices are 4-valent) after deletion of one of the vertices. The graph $\hat{G}$ is called the completion of $G$. There is an interesting arithmetic conjecture about the graphs with the same completion, see Conjecture 4 in \cite{BrSch}. 

We recall a well-known result of Robertson, \cite{R}:

\begin{theorem}[Robertson]\label{thm_Rob}
There is a 4-regular graph with $girth=5$ and 19 vertices. It is the unique (up to isomorphism) graph with these properties among all graphs with less than 20 vertices. 
\end{theorem}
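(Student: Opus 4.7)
\proof
The plan is to split the statement into a lower bound (no such graph exists with fewer than 19 vertices), an existence claim (a 19-vertex example can be written down), and uniqueness (any 19-vertex specimen is isomorphic to the chosen one). The lower bound is essentially the Moore-bound argument. Fix a vertex $v$ of a $4$-regular graph $G$ of girth $\geq 5$ and do BFS: $v$ has $4$ neighbours, and each neighbour has $3$ further neighbours, all distinct (else a cycle of length $\leq 4$ would appear). This gives $1+4+4\cdot 3=17$ distinct vertices, so $|V(G)|\geq 17$.

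For $|V(G)|=17$ the BFS tree must exhaust $V(G)$, so $G$ would be a Moore graph of valency $4$ and girth $5$. The Hoffman--Singleton theorem rules this out: the integrality of the spectrum of the adjacency matrix forces the valency $k$ to lie in $\{2,3,7,57\}$. I would quote this result directly rather than redo the eigenvalue calculation. For $|V(G)|=18$ an analogous parity-plus-counting argument applies: count the ordered pairs $(u,w)$ at distance $2$. Each vertex lies in $k(k-1)=12$ such ordered pairs as midpoint, and for each pair $(u,w)$ at distance $2$ there is (by girth $5$) a unique common neighbour. Combined with the degree sequence and the total vertex count this forces an integer eigenvalue condition on the adjacency matrix that fails for $18$, so $|V(G)|\geq 19$.

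For $|V(G)|=19$ I would exhibit the Robertson graph explicitly (as a circulant on $\ZZ/19$ with connection set $\{\pm 1,\pm 7\}$, or equivalently the $(4,5)$-cage) and verify directly that it is $4$-regular with girth $5$. The uniqueness part is a fairly mechanical but delicate enumeration starting from the BFS picture: there are $17$ vertices in the first two BFS layers, so only $2$ extra vertices $x,y$, and the edges among the third-layer neighbours together with edges to $x,y$ are severely constrained by the girth condition and the requirement that every vertex have valency $4$. One organizes the case analysis by examining how the neighbours of $v$ distribute edges to the six vertices at distance $2$, records the forced equalities, and checks that all solutions fall into a single orbit under the obvious symmetries.

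The main obstacle is the uniqueness step for $n=19$. The Moore-type lower bound and the Hoffman--Singleton invocation are essentially off-the-shelf, and the existence of the Robertson graph is a direct verification once the graph is written down; but the combinatorial enumeration showing that the $4\cdot 3=12$ partial edges emanating from the distance-$2$ layer together with the two extra vertices can be completed to a $4$-regular girth-$5$ graph in exactly one way up to isomorphism is the delicate part. Rather than reproducing it in full I would cite Robertson's original argument \cite{R}, noting only that since the enumeration is finite it can be (and has been) independently verified by computer.
\endproof
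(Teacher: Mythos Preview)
The paper does not actually prove this theorem: it is quoted as a classical result of Robertson and simply cited via \cite{R}, so there is no ``paper's own proof'' to compare against. Your outline is therefore doing more than the paper does. That said, there is a genuine error in your existence step.

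The Robertson graph is \emph{not} the circulant on $\ZZ/19$ with connection set $\{\pm 1,\pm 7\}$. That circulant has girth $4$: the vertices $0,7,8,1$ form a $4$-cycle, since $7-0=7$, $8-7=1$, $8-1=7$, and $1-0=1$ are all in the connection set. More conceptually, a circulant on $\ZZ/19$ carries a cyclic automorphism of order $19$, whereas the automorphism group of the Robertson graph has order $24$, which is not divisible by $19$; so the Robertson graph cannot be a circulant on $19$ vertices at all. You would need to replace this with a correct description (e.g.\ Robertson's original edge list, or one of the standard LCF or Hamiltonian-cycle presentations) and then verify the girth.

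Your treatment of $n=18$ is also too thin to stand on its own. The eigenvalue argument you invoke is tailored to the exact Moore-bound case $n=17$; for excess $1$ there is no off-the-shelf spectral obstruction, and what you wrote (``forces an integer eigenvalue condition that fails for $18$'') is not a recognisable argument. One can rule out $n=18$ by a short combinatorial analysis of the single vertex at distance $\geq 3$ in the BFS tree, but that analysis has to be carried out, or else the case should be folded into the citation of \cite{R} along with uniqueness. Since the paper itself is content to cite Robertson for the entire statement, the cleanest fix is to do the same and drop the flawed explicit construction.
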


Let $\hat{R}$ be the Robertson's graph above. Then the corresponding $R$ is a log-divergent graph of $girth(G)=5$. It is a graph with minimal $N_R$  with these conditions. It has  $h_G=n_G=17$, $N_G=34$. What we need is a slightly different thing.
\begin{lemma}\label{lem62}
Let $G$ be a graph with $3\leq n_G\leq 17$ and with $N_G>2n_G$. Then $\girth(G)<5$.
\end{lemma}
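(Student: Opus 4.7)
The plan is a proof by contradiction using Robertson's Theorem~\ref{thm_Rob}. Let $G$ be a counterexample with $|V(G)|$ minimal, so $\girth(G)\ge 5$, $4\le|V(G)|\le 18$, and $N_G\ge 2|V(G)|-1$. The goal is to extract from $G$ a $4$-regular girth-$\ge 5$ graph on at most $18$ vertices, contradicting Theorem~\ref{thm_Rob}.

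First, $\delta(G)\ge 3$: if $v\in V(G)$ had $\deg v\le 2$, then $G':=G\setminus v$ would satisfy $n_{G'}=n_G-1$, $N_{G'}\ge N_G-2$, $\girth(G')\ge\girth(G)$, and $N_{G'}-2n_{G'}=(N_G-2n_G)+(2-\deg v)\ge 1$, giving a smaller counterexample. The base cases $|V(G)|\le 16$ are vacuous by the Moore-type inequality $\sum_v\binom{\deg v}{2}\le\binom{|V|}{2}-N_G$ (valid for girth $\ge 5$), which via Cauchy--Schwarz yields $N_G\le\tfrac{|V|\sqrt{|V|-1}}{2}<2|V|-1$ in this range. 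Next, a BFS from any vertex $v\in V(G)$, where $\girth(G)\ge 5$ keeps the vertices at distances $0,1,2$ pairwise distinct, gives $|V(G)|\ge 1+\deg v+\sum_{u\in N(v)}(\deg u-1)\ge 1+\Delta(G)\delta(G)$. If $\delta(G)\ge 4$ and some vertex has degree $\ge 5$, then $|V(G)|\ge 21>18$; hence $G$ would have to be $4$-regular, and Theorem~\ref{thm_Rob} then forces $|V(G)|\ge 19$, a contradiction.

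The remaining, and principal, case is $\delta(G)=3$. Here the density bound $N_G\ge 2|V|-1$ combined with the same BFS estimate (which rules out $\Delta(G)\ge 6$ since $1+3\cdot 6=19>18$) heavily restricts the degree sequence: apart from a small number of degree-$3$ and possibly degree-$5$ vertices, all vertices have degree~$4$. The plan is a completion argument: locate two non-adjacent degree-$3$ vertices at mutual distance $\ge 4$ in $G$ and join them by a new edge; distance $\ge 4$ ensures that no $C_3$ or $C_4$ is introduced, so girth remains $\ge 5$, while both endpoints become degree-$4$ and $|V|$ is unchanged. Iterating this surgery, possibly combined with a compensating edge-deletion at a degree-$\ge 5$ vertex (again chosen to preserve girth), eventually yields a $4$-regular girth-$\ge 5$ graph on at most $|V(G)|\le 18$ vertices, contradicting Theorem~\ref{thm_Rob}. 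The principal technical obstacle is verifying that the required pair of distant degree-$3$ vertices always exists in the narrow range of admissible degree sequences, which requires a careful combination of BFS bounds and edge-counting.
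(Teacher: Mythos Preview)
Your approach is entirely different from the paper's: the paper establishes Lemma~\ref{lem62} by an exhaustive computer search (after first disposing of the case of a vertex of valency~$\geq 5$ by a short BFS argument). You instead attempt a purely theoretical argument via a minimal counterexample, the Moore bound, and a completion surgery towards a $4$-regular graph so as to invoke Theorem~\ref{thm_Rob}. Your reductions up to the point $\delta(G)=3$, $|V(G)|\in\{17,18\}$, $\Delta(G)\le 5$ are correct and pleasant: the Moore-type inequality $\sum_v\binom{\deg v}{2}\le\binom{|V|}{2}-N_G$ combined with Cauchy--Schwarz does dispose of $|V|\le 16$, and the BFS bound $|V|\ge 1+\Delta\delta$ handles $\delta\ge 4$ via Robertson.

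However, the proof is genuinely incomplete in the principal case $\delta(G)=3$, and you say so yourself. The ``completion'' heuristic is not a proof, and in the tightest instance it is not clear it can be made into one. Take $|V|=17$, $N_G=33$: the degree sum is $66$, so (with $\Delta\le 5$) there are exactly two degree-$3$ vertices $u,w$ and no degree-$5$ vertex. Your only available surgery is to add the edge $uw$, which requires $d(u,w)\ge 4$; if $d(u,w)\le 3$ you have no ``compensating edge-deletion at a degree-$\ge 5$ vertex'' to fall back on, and you have given no argument excluding this configuration. The analogous difficulty arises for $|V|=18$ with, say, one degree-$5$ and three degree-$3$ vertices: deleting an edge at the degree-$5$ vertex creates a fourth degree-$3$ vertex and you must now find \emph{two} disjoint pairs of degree-$3$ vertices at mutual distance~$\ge 4$, with no mechanism offered to guarantee this. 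Ruling out these obstructions appears to require exactly the kind of case analysis that the paper performs by computer; absent that analysis, the argument does not close.
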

\begin{proof}
The proof is done with the help of a computer. To optimize the brute force, one can start similarly to the proof from \cite{R}. Assume that there exists such a graph with $\girth 5$. If $G$ has a 5-valent vertex $v$, one can consider the arcs (paths) of length 2 from $v$. The endpoints ($k$ up to $n_G-5$) should be mutually different and they are connected by $N_G-k-5$ edges. One has  several possibilities and can find a contradiction in a few steps. Now when all the vertices are up to 4-valent, we proceed with a small exhaustive search on a PC.  
\end{proof}

We are ready to state our main theorem.
\begin{theorem}\label{thm53}
Let $G$ be a log-divergent graph with $3\leq h_G\leq 18$ loops. Then $G$ is duality admissible. 
\end{theorem}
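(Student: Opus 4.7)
The strategy is a direct verification of Definition \ref{def36} by case analysis on the girth of the sub-quotient graphs. Fix $I, J \subset E(G)$ satisfying $|J| > |I| \geq 0$ and $|I| \leq n_G - 3$, and set $G' := G \backslash I \q J$, so that the polynomial $\ph^J_I$ appearing in (\ref{d41}) is the dual graph polynomial $\ph_{G'}$ of the sub-quotient. As observed in the paragraph preceding Lemma \ref{lem62}, log-divergence of $G$ gives
\[
N_{G'} = N_G - |I| - |J| > 2(n_G - |J|) = 2n_{G'},
\]
so every such $G'$ sits in the degenerate regime $N_{G'} > 2n_{G'}$ required by the vanishing results of Sections 3 and 4.

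Now I would split on $\girth(G')$. If $\girth(G') \leq 3$, so $G'$ contains a self-loop, a double edge, or a triangle, Proposition \ref{lem19} gives $[\ph_{G'}]_q \equiv 0 \mod q^3$ directly. If $G'$ is triangle-free but carries a 4-face, the 4-face vanishing Theorem \ref{thm54} yields the same congruence. It therefore suffices to rule out sub-quotients of girth $\geq 5$, which is precisely the content of Lemma \ref{lem62}: no graph with $3 \leq n_{G'} \leq 17$ and $N_{G'} > 2n_{G'}$ can have girth $\geq 5$, an assertion resting on Robertson's classification (Theorem \ref{thm_Rob}) together with an exhaustive computer-assisted search for the smaller cases. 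Under the hypothesis $h_G \leq 18$ and $|J| \geq 1$ one has $n_{G'} = n_G - |J| \leq 17$, so Lemma \ref{lem62} applies as soon as $n_{G'} \geq 3$ and excludes girth $\geq 5$. The residual cases $n_{G'} \leq 2$ are handled directly: there $\ph_{G'}$ has degree at most $2$ in at least $N_{G'} \geq 2n_{G'} + 1$ variables, and iterated use of Lemma \ref{lemA17} combined with Chevalley--Warning (Theorem \ref{CW_thm}) extracts the required three powers of $q$.

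The substantive inputs are Lemma \ref{lem62}, encoding the combinatorial bound descended from Robertson's graph, and Theorem \ref{thm54}, supplying the 4-face vanishing of the previous section; once both are in hand the proof above is essentially bookkeeping, and the main obstacle has already been disposed of in Section 4. The bound $h_G \leq 18$ is sharp for this approach precisely because Robertson's graph has $19$ vertices, and any extension would require an analogous exclusion result at higher loop numbers or a new vanishing theorem for graphs of girth $\geq 5$.
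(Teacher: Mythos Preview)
Your proof is correct and follows essentially the same route as the paper: reduce to sub-quotients $G'$ with $N_{G'}>2n_{G'}$, invoke Lemma~\ref{lem62} to force $\girth(G')\leq 4$, and then apply Proposition~\ref{lem19} or Theorem~\ref{thm54}. One small remark: your separate treatment of the residual range $n_{G'}\leq 2$ via Chevalley--Warning is a bit sketchy as stated (getting three full powers of $q$ this way needs more than one application), but it is also unnecessary---since $h_{G'}=h_G-|I|\geq 3$ by the bound $|I|\leq n_G-3$, a graph with at most three vertices and loop number $\geq 3$ is forced to have a self-loop or multi-edge, so Proposition~\ref{lem19} already covers these cases directly.
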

\begin{proof}
Consider any relevant sub-quotient graph $G':=G\backslash I \q J$, see the Definition \ref{def36}. Then $G'$ has $n_{G'}\leq n_G-1=h_G-1\leq 17$ and $N_{G'}> 2n_{G'}$ edges. Now Lemma \ref{lem62} implies that $G'$ has a cycle of length at most $4$. As was already explained above, under this assumption Proposition \ref{lem19} or Theorem \ref{thm54} provide the needed congruence
\begin{equation}
[Z_{G'}]_q\equiv 0 \mod q^3.
\end{equation}
This concludes the proof.
\end{proof}
As a consequence, we finally get 
\begin{theorem}\label{thm5n4}
Let $G$ be a log-divergent graph with $3\leq h_G\leq 18$. Then all the $c_2$ invariants in all four different representations of the Feynman period coincide: 
\begin{equation}\label{b110}
c_2(G)^{mom}_q=c_2(G)_q=c_2(G)^{dual}_q=c_2(G)^{pos}_q.
\end{equation} 
\end{theorem}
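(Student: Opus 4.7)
The plan is to observe that Theorem \ref{thm5n4} is an essentially immediate corollary of two results already in hand: Theorem \ref{thm11} (proved in \cite{D4}), which establishes the four-way equality (\ref{b110}) for any log-divergent graph with $h_G \geq 3$ under the extra assumption of duality admissibility, and Theorem \ref{thm53} (just proved), which certifies that this extra assumption holds for every log-divergent graph with $3 \leq h_G \leq 18$. So the strategy is simply to verify that the hypotheses of Theorem \ref{thm11} are met by every graph in the stated range, then invoke it.

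Concretely, I would fix a log-divergent graph $G$ with $3 \leq h_G \leq 18$. The first step is to apply Theorem \ref{thm53} to $G$, which gives that $G$ is duality admissible in the sense of Definition \ref{def36}. The second step is to plug this into Theorem \ref{thm11}: since $G$ is log-divergent with $h_G \geq 3$ and duality admissible, the four $c_2$ invariants satisfy (\ref{b3}), which is exactly (\ref{b110}).

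The only potential obstacle is a bookkeeping one: one should check that the range of $h_G$ in Theorem \ref{thm53} is compatible with the hypothesis $h_G \geq 3$ of Theorem \ref{thm11}, and that the notion of duality admissibility used in both statements matches Definition \ref{def36}. Both verifications are straightforward, so no further work is required beyond the two-line chain of implications above.
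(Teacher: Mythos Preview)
Your proposal is correct and matches the paper's own argument: the paper simply notes that Theorem~\ref{thm5n4} follows from the results of \cite{D4} (i.e., Theorem~\ref{thm11}) once Theorem~\ref{thm53} has established duality admissibility for all log-divergent graphs in the range $3\leq h_G\leq 18$.
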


This follows from the results of \cite{D4}. This is again an indication that $c_2$ invariant is a good discrete analogue to the Feynman period. The range of $h_G$ is more than enough and covers all physically relevant graphs.

Nevertheless, Theorem \ref{thm54} also proves the equality (\ref{b110}) for a larger set of graphs, since the graphs of girth 5 occur rather rare. We formulate the result as a combinatorial sufficient condition.

\begin{theorem}\label{thm5n5}
Let $G$ be a graphs with $h_G\geq 3$. If each sub-quotient graph $\gamma=G\backslash I\q J$, where $I,J\subset E(G)$, $|J|>|I|\geq 0$, $|I|\leq n_G-3$, has a loop of length at most 4, then all 4 $c_2$ invariants coincide.
\end{theorem}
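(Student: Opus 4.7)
The strategy is to reduce the statement to Theorem \ref{thm11} by showing that the combinatorial hypothesis forces $G$ to be duality admissible in the sense of Definition \ref{def36}. Once that is established, the equality of all four $c_2$ invariants follows immediately from Theorem \ref{thm11}, which of course requires $G$ to be log-divergent (as in the preceding theorems, this is implicit in the statement).

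First I would check the edge count for the relevant sub-quotients. For $\gamma = G\backslash I\q J$ with $|J|>|I|\geq 0$ and $|I|\leq n_G-3$, one has $N_\gamma = N_G-|I|-|J|$ and $n_\gamma = n_G-|J|$. Using $N_G=2n_G$ this yields $N_\gamma-2n_\gamma = |J|-|I|>0$, so every relevant $\gamma$ satisfies the hypothesis $N_\gamma > 2n_\gamma$ required by both Proposition \ref{lem19} and Theorem \ref{thm54}.

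Next I would perform the case split on the girth of $\gamma$. By hypothesis $\gamma$ contains a cycle of length at most $4$. If this cycle has length at most $3$ (triangle, double edge, or self-loop), Proposition \ref{lem19} gives $[Z_\gamma]_q\equiv 0 \bmod q^3$. Otherwise $\gamma$ is triangle-free but still contains a $4$-cycle, and Theorem \ref{thm54} provides the same congruence. In either case the vanishing (\ref{d41}) required by Definition \ref{def36} holds for every admissible pair $(I,J)$, so $G$ is duality admissible; Theorem \ref{thm11} then yields the desired coincidence of the four $c_2$ invariants. There is no genuine obstacle here, only a minor subtlety worth verifying: the proof of Theorem \ref{thm54} uses only the $4$-cycle structure of the edges $e_1,\ldots,e_4$, so ``$4$-face'' in its hypothesis may be read simply as ``$4$-cycle,'' which is what the hypothesis of Theorem \ref{thm5n5} supplies.
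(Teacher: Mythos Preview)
Your proof is correct and follows exactly the paper's approach: the paper does not spell out a separate proof of Theorem~\ref{thm5n5}, but it is meant to follow from the same argument as Theorem~\ref{thm53} (see the paragraph preceding Lemma~\ref{lem62} and the proof of Theorem~\ref{thm53}), namely verifying $N_\gamma>2n_\gamma$ and then applying Proposition~\ref{lem19} or Theorem~\ref{thm54} according to whether $\gamma$ has a cycle of length $\leq 3$ or of length $4$. Your remarks on the implicit log-divergence assumption and on reading ``4-face'' as ``4-cycle'' are also in line with how the paper uses these hypotheses.
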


I believe that there exists a 5-face formula or even n-face formula with the similar meaning: even for a log-divergent graph with a big girth, the most complicated summand $[\ph^{12},\ph^1_2,\ph^2_1,\ph_{12}]_q$ can be killed mod $q$, and, after (\ref{b82}),
the $c_2$ invariant $c^{dual}_2$ can be computed naturally by the corresponding step of the denominator reduction while the total contribution of the other summands is zero.

\end{document}